\documentclass[11pt]{preprint}
\usepackage[full]{textcomp}
\usepackage[osf]{newtxtext} 
\usepackage{colortbl}

\usepackage{comment}
\usepackage[usenames,dvipsnames]{xcolor}

\usepackage{amssymb}
\usepackage{lmodern}
\usepackage{mathtools}

\usepackage{hyperref}
\usepackage{breakurl}
\usepackage{float}
\usepackage{aligned-overset}
\usepackage{mhenvs}
\usepackage{mhequ} 
\newcommand{\be}{\begin{equation*}}
\newcommand{\ee}{\end{equation*}}
\usepackage{mhsymb}
\usepackage{booktabs}
\usepackage{tikz,tkz-tab}
\tikzset{every picture/.style={line width=0.75pt}} 
\usetikzlibrary{matrix,decorations.pathreplacing, calc, positioning,fit,arrows.meta}

\usepackage{pgfplots}
\pgfplotsset{compat=1.18}
\usetikzlibrary{babel}

\usepackage{tcolorbox}
\usepackage{mathrsfs}
\usepackage[utf8]{inputenc}
\usepackage{longtable}
\usepackage{wrapfig}
\usepackage{subcaption}
\usepackage{mathrsfs}
\usepackage{epsfig}
\usepackage{microtype}
\usepackage{comment}
\usepackage{wasysym}
\usepackage{centernot}
\usepackage{enumitem}
\usepackage{bm}
\usepackage{stackrel}
\usepackage{graphicx}
\makeatletter
\newcommand{\globalcolor}[1]{%
  \color{#1}\global\let\default@color\current@color
}
\makeatother

\usetikzlibrary{calc}
\usetikzlibrary{decorations}
\usetikzlibrary{positioning}
\usetikzlibrary{shapes}
\usetikzlibrary{external}

\definecolor{blush}{rgb}{0.87, 0.36, 0.51}
	\definecolor{brightcerulean}{rgb}{0.11, 0.67, 0.84}
	\definecolor{greenryb}{rgb}{0.4, 0.69, 0.2}

\newif\ifdark
\darkfalse

\ifdark
\definecolor{darkred}{rgb}{0.9,0.2,0.2}
\definecolor{darkblue}{rgb}{0.7,0.3,1}
\definecolor{darkgreen}{rgb}{0.1,0.9,0.1}
\definecolor{franck}{rgb}{0,0.8,1}
\definecolor{pagebackground}{rgb}{.15,.21,.18}
\definecolor{pageforeground}{rgb}{.84,.84,.85}
\pagecolor{pagebackground}
\AtBeginDocument{\globalcolor{pageforeground}}
\definecolor{symbols}{rgb}{0,0.7,1}
\colorlet{connection}{red!80!black}
\colorlet{boxcolor}{blue!50}

\else

\definecolor{darkred}{rgb}{0.7,0.1,0.1}
\definecolor{darkblue}{rgb}{0.4,0.1,0.8}
\definecolor{darkgreen}{rgb}{0.1,0.7,0.1}
\definecolor{franck}{rgb}{0,0,1}
\definecolor{pagebackground}{rgb}{1,1,1}
\definecolor{pageforeground}{rgb}{0,0,0}
\colorlet{symbols}{blue!90!black}
\colorlet{connection}{red!30!black}
\colorlet{boxcolor}{blue!50!black}

\fi

\def\slash{\leavevmode\unskip\kern0.18em/\penalty\exhyphenpenalty\kern0.18em}
\def\dash{\leavevmode\unskip\kern0.18em--\penalty\exhyphenpenalty\kern0.18em}

\DeclareMathAlphabet{\mathbbm}{U}{bbm}{m}{n}

\DeclareFontFamily{U}{BOONDOX-calo}{\skewchar\font=45 }
\DeclareFontShape{U}{BOONDOX-calo}{m}{n}{
  <-> s*[1.05] BOONDOX-r-calo}{}
\DeclareFontShape{U}{BOONDOX-calo}{b}{n}{
  <-> s*[1.05] BOONDOX-b-calo}{}
\DeclareMathAlphabet{\mcb}{U}{BOONDOX-calo}{m}{n}
\SetMathAlphabet{\mcb}{bold}{U}{BOONDOX-calo}{b}{n}

\setlist{noitemsep,topsep=4pt,leftmargin=1.5em}

\DeclareMathAlphabet{\mathbbm}{U}{bbm}{m}{n}

\DeclareMathAlphabet{\mcb}{U}{BOONDOX-calo}{m}{n}
\SetMathAlphabet{\mcb}{bold}{U}{BOONDOX-calo}{b}{n}
\DeclareFontFamily{U}{mathx}{\hyphenchar\font45}
\DeclareFontShape{U}{mathx}{m}{n}{
      <5> <6> <7> <8> <9> <10>
      <10.95> <12> <14.4> <17.28> <20.74> <24.88>
      mathx10
      }{}
\DeclareSymbolFont{mathx}{U}{mathx}{m}{n}
\DeclareMathSymbol{\bigtimes}{1}{mathx}{"91}

\providecommand{\figures}{false}
{ \ifthenelse{\equal{\figures}{false}} {#1}{\[ {\rm Figure \ missing !} \]} }{}

\usepackage{mathtools}

\tikzstyle{tinydots}=[dash pattern=on \pgflinewidth off \pgflinewidth]
\tikzstyle{superdense}=[dash pattern=on 4pt off 1pt]

\newcommand{\beq}{\begin{equation}}
\newcommand{\eeq}{\end{equation}}

\usepackage{empheq}

\def\${|\!|\!|}

\newcounter{theorem}

\newtheorem{ex}[theorem]{Example}

\newenvironment{DIFnomarkup}{}{} 

\theorembodyfont{\rmfamily}

\newfont{\indic}{bbmss12}

\def\Nabla_#1{\nabla_{\!#1}}

\makeatletter
\pgfdeclareshape{crosscircle}
{
  \inheritsavedanchors[from=circle] 
  \inheritanchorborder[from=circle]
  \inheritanchor[from=circle]{north}
  \inheritanchor[from=circle]{north west}
  \inheritanchor[from=circle]{north east}
  \inheritanchor[from=circle]{center}
  \inheritanchor[from=circle]{west}
  \inheritanchor[from=circle]{east}
  \inheritanchor[from=circle]{mid}
  \inheritanchor[from=circle]{mid west}
  \inheritanchor[from=circle]{mid east}
  \inheritanchor[from=circle]{base}
  \inheritanchor[from=circle]{base west}
  \inheritanchor[from=circle]{base east}
  \inheritanchor[from=circle]{south}
  \inheritanchor[from=circle]{south west}
  \inheritanchor[from=circle]{south east}
  \inheritbackgroundpath[from=circle]
  \foregroundpath{
    \centerpoint%
    \pgf@xc=\pgf@x%
    \pgf@yc=\pgf@y%
    \pgfutil@tempdima=\radius%
    \pgfmathsetlength{\pgf@xb}{\pgfkeysvalueof{/pgf/outer xsep}}%
    \pgfmathsetlength{\pgf@yb}{\pgfkeysvalueof{/pgf/outer ysep}}%
    \ifdim\pgf@xb<\pgf@yb%
      \advance\pgfutil@tempdima by-\pgf@yb%
    \else%
      \advance\pgfutil@tempdima by-\pgf@xb%
    \fi%
    \pgfpathmoveto{\pgfpointadd{\pgfqpoint{\pgf@xc}{\pgf@yc}}{\pgfqpoint{-0.707107\pgfutil@tempdima}{-0.707107\pgfutil@tempdima}}}
    \pgfpathlineto{\pgfpointadd{\pgfqpoint{\pgf@xc}{\pgf@yc}}{\pgfqpoint{0.707107\pgfutil@tempdima}{0.707107\pgfutil@tempdima}}}
    \pgfpathmoveto{\pgfpointadd{\pgfqpoint{\pgf@xc}{\pgf@yc}}{\pgfqpoint{-0.707107\pgfutil@tempdima}{0.707107\pgfutil@tempdima}}}
    \pgfpathlineto{\pgfpointadd{\pgfqpoint{\pgf@xc}{\pgf@yc}}{\pgfqpoint{0.707107\pgfutil@tempdima}{-0.707107\pgfutil@tempdima}}}
  }
}
\makeatother

\def\symbol#1{\textcolor{symbols}{#1}}

\def\decorate#1#2{
        \ifnum#2>0
    		\foreach \count in {1,...,#2}{
	       	let
				\p1 = (sourcenode.center),
                \p2 = (sourcenode.east),
				\n1 = {\x2-\x1},
				\n2 = {1mm},
				\n3 = {(1.3+0.6*(\count-1))*\n1},
				\n4 = {0.7*\n1}
			in 
        		node[rectangle,fill=symbols,rotate=30,inner sep=0pt,minimum width=0.2*\n2,minimum height=\n2] at ($(sourcenode.center) + (\n3,\n4)$) {}
				}
		\fi
        \ifnum#1>0
    		\foreach \count in {1,...,#1}{
	       	let
				\p1 = (sourcenode.center),
                \p2 = (sourcenode.east),
				\n1 = {\x2-\x1},
				\n2 = {1mm},
				\n3 = {(1.3+0.6*(\count-1))*\n1},
				\n4 = {0.7*\n1}
			in 
        		node[rectangle,fill=symbols,rotate=-30,inner sep=0pt,minimum width=0.2*\n2,minimum height=\n2] at ($(sourcenode.center) + (-\n3,\n4)$) {}
				}
		\fi
}

\tikzset{
    dectriangle/.style 2 args={
        triangle,
        alias=sourcenode,
        append after command={\decorate{#1}{#2}}
    },
    dectriangle/.default={0}{0},
}

\tikzset{
	cross/.style={path picture={ 
  		\draw[symbols]
			(path picture bounding box.south east) -- (path picture bounding box.north west) (path picture bounding box.south west) -- (path picture bounding box.north east);
		}},
root/.style={circle,fill=green!50!black,inner sep=0pt, minimum size=1.2mm},
        dot/.style={circle,fill=pageforeground,inner sep=0pt, minimum size=1mm},
        dotred/.style={circle,fill=pageforeground!50!pagebackground,inner sep=0pt, minimum size=2mm},
        var/.style={circle,fill=pageforeground!10!pagebackground,draw=pageforeground,inner sep=0pt, minimum size=3mm},
        kernel/.style={semithick,shorten >=2pt,shorten <=2pt},
        kernels/.style={snake=zigzag,shorten >=2pt,shorten <=2pt,segment amplitude=1pt,segment length=4pt,line before snake=2pt,line after snake=5pt,},
        rho/.style={densely dashed,semithick,shorten >=2pt,shorten <=2pt},
           testfcn/.style={dotted,semithick,shorten >=2pt,shorten <=2pt},
        renorm/.style={shape=circle,fill=pagebackground,inner sep=1pt},
        labl/.style={shape=rectangle,fill=pagebackground,inner sep=1pt},
        xic/.style={very thin,circle,draw=symbols,fill=symbols,inner sep=0pt,minimum size=1.2mm},
        g/.style={very thin,rectangle,draw=symbols,fill=symbols!10!pagebackground,inner sep=0pt,minimum width=2.5mm,minimum height=1.2mm},
        xi/.style={very thin,circle,draw=symbols,fill=symbols!10!pagebackground,inner sep=0pt,minimum size=1.2mm},
	xies/.style={very thin,rectangle,fill=green!50!black!25,draw=symbols,inner sep=0pt,minimum size=1.1mm},
	xiesf/.style={very thin,rectangle,fill=green!50!black,draw=symbols,inner sep=0pt,minimum size=1.1mm},
        xix/.style={very thin,crosscircle,fill=symbols!10!pagebackground,draw=symbols,inner sep=0pt,minimum size=1.2mm},
        X/.style={very thin,cross,rectangle,fill=pagebackground,draw=symbols,inner sep=0pt,minimum size=1.2mm},
	xib/.style={thin,circle,fill=symbols!10!pagebackground,draw=symbols,inner sep=0pt,minimum size=1.6mm},
	xie/.style={thin,circle,fill=green!50!black,draw=symbols,inner sep=0pt,minimum size=1.6mm},
	xid/.style={thin,circle,fill=symbols,draw=symbols,inner sep=0pt,minimum size=1.6mm},
	xibx/.style={thin,crosscircle,fill=symbols!10!pagebackground,draw=symbols,inner sep=0pt,minimum size=1.6mm},
	kernels2/.style={very thick,draw=connection,segment length=12pt},
	keps/.style={thin,draw=symbols,->},
	kepspr/.style={thick,draw=connection,->},
	krho/.style={thin,draw=symbols,superdense,->},
	krhopr/.style={thick,draw=connection,superdense},
	triangle/.style = { regular polygon, regular polygon sides=3},
	not/.style={thin,circle,draw=connection,fill=connection,inner sep=0pt,minimum size=0.5mm},
	diff/.style = {very thin,draw=symbols,triangle,fill=red!50!black,inner sep=0pt,minimum size=1.6mm},
	diff1/.style = {very thin,dectriangle={1}{0},fill=red!50!black,draw=symbols,inner sep=0pt,minimum size=1.6mm},
	diff2/.style = {very thin,dectriangle={1}{1},fill=red!50!black,draw=symbols,inner sep=0pt,minimum size=1.6mm},
		diffmini/.style = {very thin,rectangle,fill=black,draw=black,inner sep=0pt,minimum size=0.75mm},
	 kernelsmod/.style={very thick,draw=connection,segment length=12pt},
	 rec/.style = {very thin,rectangle,fill=black,draw=black,inner sep=0pt,minimum size=2mm},
	cerc/.style={very thin,circle,draw=black,fill=symbols,inner sep=0pt,minimum size=2mm},
	stars/.style={very thin,star,star points=6,star point ratio=0.5, draw=black,fill=red,inner sep=0pt,minimum size=0.7mm},
	>=stealth,
        }
        \tikzset{
root/.style={circle,fill=black!50,inner sep=0pt, minimum size=3mm},
        circ/.style={circle,fill=white,draw=black,very thin,inner sep=.5pt, minimum size=1.2mm},
        round1/.style={fill=white,outer sep = 0,inner sep=2pt,rounded corners=1mm,draw,text=black,thin,minimum size=1.2mm},
          circ1/.style={circle,fill=red!10,draw=red,very thin,inner sep=.5pt, minimum size=1.2mm},
        rect/.style={fill=white,outer sep = 0,inner sep=2pt,rectangle,draw,text=black,thin,minimum size=1.2mm},
        rect1/.style={fill=white,outer sep = 0,inner sep=2pt,rectangle,draw,text=black,thin,minimum size=1.2mm},
        round2/.style={fill=red!10,outer sep = 0,inner sep=2pt,rounded corners=1mm,draw,text=black,thin,minimum size=1.2mm},
       round3/.style={fill=blue!10,outer sep = 0,inner sep=2pt,rounded corners=1mm,draw,text=black,thin,minimum size=1.2mm}, 
        rect2/.style={fill=black!10,outer sep = 0,inner sep=2pt,rectangle,draw,text=black,thin,minimum size=1.2mm},
        dot/.style={circle,fill=black,inner sep=0pt, minimum size=1.2mm},
        dotred/.style={circle,fill=black!50,inner sep=0pt, minimum size=2mm},
        var/.style={circle,fill=black!10,draw=black,inner sep=0pt, minimum size=3mm},
        kernel/.style={semithick,shorten >=2pt,shorten <=2pt},
         diag/.style={thin,shorten >=4pt,shorten <=4pt},
        kernel1/.style={thick},
        kernels/.style={snake=zigzag,shorten >=2pt,shorten <=2pt,segment amplitude=1pt,segment length=4pt,line before snake=2pt,line after snake=5pt,},
		kernels1/.style={snake=zigzag,segment amplitude=0.5pt,segment length=2pt},
		rho1/.style={densely dotted,semithick},
        rho/.style={densely dashed,semithick,shorten >=2pt,shorten <=2pt},
           testfcn/.style={dotted,semithick,shorten >=2pt,shorten <=2pt},
           visible/.style={draw, circle, fill, inner sep=0.25ex},
        renorm/.style={shape=circle,fill=white,inner sep=1pt},
        labl/.style={shape=rectangle,fill=white,inner sep=1pt},
        xic/.style={very thin,circle,fill=symbols,draw=black,inner sep=0pt,minimum size=1.2mm},
        xi/.style={very thin,circle,fill=blue!10,draw=black,inner sep=0pt,minimum size=1.2mm},
	xib/.style={very thin,circle,fill=blue!10,draw=black,inner sep=0pt,minimum size=1.6mm},
	xie/.style={very thin,circle,fill=green!50!black,draw=black,inner sep=0pt,minimum size=1mm},
	xid/.style={very thin,circle,fill=symbols,draw=black,inner sep=0pt,minimum size=1.6mm},
	edgetype/.style={very thin,circle,draw=black,inner sep=0pt,minimum size=5mm},
	nodetype/.style={very thick,circle,draw=black,inner sep=0pt,minimum size=5mm},
	kernels2/.style={very thick,draw=connection,segment length=12pt},
clean/.style={thin,circle,fill=black,inner sep=0pt,minimum size=1mm},	not/.style={thin,circle,fill=symbols,draw=connection,fill=connection,inner sep=0pt,minimum size=0.8mm},
	>=stealth,
        }

\makeatletter
\def\DeclareSymbol#1#2#3{%
	\expandafter\gdef\csname MH@symb@#1\endcsname{\tikzsetnextfilename{symbol#1}%
	\tikz[baseline=#2,scale=0.15,draw=symbols,line join=round]{#3}}%
	\expandafter\gdef\csname MH@symb@#1s\endcsname{\scalebox{0.75}{\tikzsetnextfilename{symbol#1}%
	\tikz[baseline=#2,scale=0.15,draw=symbols,line join=round]{#3}}}%
	\expandafter\gdef\csname MH@symb@#1ss\endcsname{\scalebox{0.65}{\tikzsetnextfilename{symbol#1}%
	\tikz[baseline=#2,scale=0.15,draw=symbols,line join=round]{#3}}}%
	}
\def\<#1>{\ifthenelse{\boolean{mmode}}{\mathchoice{\csname MH@symb@#1\endcsname}{\csname MH@symb@#1\endcsname}{\csname MH@symb@#1s\endcsname}{\csname MH@symb@#1ss\endcsname}}{\csname MH@symb@#1\endcsname}}
\makeatother

 \def\1{\mathbf{\symbol{1}}}

\DeclareMathAlphabet{\mathpzc}{OT1}{pzc}{m}{it}

\def\eqref#1{(\ref{#1})}

\makeatletter 
\newcommand*{\bigcdot}{}
\DeclareRobustCommand*{\bigcdot}{%
  \mathbin{\mathpalette\bigcdot@{}}%
}
\newcommand*{\bigcdot@scalefactor}{.5}
\newcommand*{\bigcdot@widthfactor}{1.15}
\newcommand*{\bigcdot@}[2]{%
  \sbox0{$#1\vcenter{}$}
  \sbox2{$#1\cdot\m@th$}%
  \hbox to \bigcdot@widthfactor\wd2{%
    \hfil
    \raise\ht0\hbox{%
      \scalebox{\bigcdot@scalefactor}{%
        \lower\ht0\hbox{$#1\bullet\m@th$}%
      }%
    }%
    \hfil
  }%
}
\makeatother

\def\act{\bigcdot}

\tcbset
{colframe=boxcolor,colback=symbols!7!pagebackground,coltext=pageforeground,
fonttitle=\bfseries,nobeforeafter,center title,size=fbox,boxsep=1.5pt,
top=0mm,bottom=0mm,boxsep=0mm,tcbox raise base}

\def\two{{\<generic>\kern0.05em\<genericb>}}
\def\twoI{{\<Ito>\kern0.05em\<Itob>}}

\def\mail#1{\burlalt{#1}{mailto:#1}}

\usepackage{thmtools}

\begin{document}

\title{Kruskal-style algorithm for cubic Schrödinger equation molecule reduction}

\author{Yvain Bruned, Valentin Clarisse}
\institute{ 
	Universite de Lorraine, CNRS, IECL, F-54000 Nancy, France
	\\
	Email:\ \begin{minipage}[t]{\linewidth}
		\mail{yvain.bruned@univ-lorraine.fr}
		\\
		\mail{valentin.clarisse@univ-lorraine.fr}.
\end{minipage}}

\maketitle 

\begin{abstract}
We are interested in the molecule reduction algorithm introduced by Deng and Hani. They use this algorithm to establish a rigidity theorem, which plays a central role in the kinetic-time derivation of the wave equation associated with the cubic Schrödinger equation. In the present article, we show that this algorithm is a graph traversal algorithm of Kruskal type, and we prove that it constructs a Kruskal spanning tree of the input molecule. This reveals the origin of the main tool for deriving kinetic equations which has also been used for the long time derivation of the Boltzmann equation.
\end{abstract}

\setcounter{tocdepth}{2}
\tableofcontents

\section{Introduction}

Let $d \geqslant 3$ be an integer, we consider the following cubic Schrödinger equation:
\begin{equs}
	\label{NLS_equa}
\begin{cases}
	(i \partial_t-\Delta_\beta) u = - \lambda^2 u |u|^2 & \text{over } \mathbb{R}^+ \times \mathbb{T}_L^d \\
	u(0,\cdot) = u_{\textup{in}}
\end{cases}
\end{equs}
with:
\begin{itemize}
	\item $L\gg 1$ the size of the torus~;
	\item $\displaystyle\Delta_\beta = \dfrac{1}{2\pi} \sum_{k=1}^d \beta_k \partial_k^2$ the Laplacian "twisted" by a vector $\beta \in (\mathbb{R}_+)^d \setminus \mathcal{N}$, where $\mathcal{N}$ is a negligible subset of $\mathbb{R}^d$ with respect to the Lebesgue measure (see \cite[Lemma A.8]{DH23}).
	\item $\widehat{u_{\text{in}}}(k)=\sqrt{\phi_{\text{in}}(k)}X_k$ where $\phi_{\text{in}}\in\mathcal{S}(\mathbb{R}^d,\mathbb{R}^+)$ and $(X_k)_{k\in\mathbb{Z}_L^d}$ are i.i.d. Gaussian random variables satisfying
	\begin{equs}
		\mathbb{E}(X_k \bar{X}_{\ell}) = \delta_{\ell,k}, \quad \mathbb{E}(X_k X_{\ell}) = 0.
	\end{equs} 
\end{itemize}
As explained in section 2.1 of \cite{DH21}, the previous equation can be reformulated as followed:
\[
\begin{cases}
	\partial_t\mathbf{a} = \mathcal{C}(\mathbf{a}, \overline{\mathbf{a}}, \mathbf{a})\\
	\mathbf{a}(0) = \left(\sqrt{\phi_{\text{in}}(k)} X_k\right)_{k\in\mathbb{Z}_L^d}
\end{cases}
\]
where $\mathbf{a}=(a_k)_{k\in\mathbb{Z}_L^d}$ and $\mathcal{C}$ is a trilinear operator. In \cite{DH23}, the authors manage to obtain the  rigorous derivation of the wave kinetic equation from \eqref{NLS_equa} at the kinetic timescale in \cite[Theorem 1.1]{DH23} that we recall below

\begin{theorem}[Deng--Hani]
	\label{main_theorem_DH23}
	For all $\beta \in (\mathbb{R}^+)^d \setminus \mathcal{N}$,  $A \geqslant 40d$, $\phi_{\text{in}} \in \mathcal{S}(\mathbb{R}^d, \mathbb{R}^+)$, there exists $\delta > 0$ such that, for $L$ sufficiently large satisfying the scaling law $\lambda^2 = L^{d-1}$, the cubic Schrödinger equation admits a smooth solution up to time $T = \delta T_{\textup{kin}}=\delta\dfrac{L^2}{2}$ with probability greater than or equal to $1 - L^{-A}$. Moreover, denoting by $\phi$ the solution to the kinetic wave equation:
	\[
	\lim_{L \to +\infty} \sup_{t \in [0,1]} \sup_{k \in \mathbb{Z}_L^d} \left| \mathbb{E}\left[ \left| \widehat{u}(T_{\textup{kin}}\delta t, k) \right|^2 \right] - \phi(\delta t, k) \right| = 0.
	\]
\end{theorem}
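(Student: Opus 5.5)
This statement is the main theorem of Deng--Hani \cite{DH23}, recalled here rather than proved; the present paper takes it as given. A proof plan therefore has to follow the architecture of \cite{DH23}, and the only step this paper itself addresses is the combinatorial input behind the rigidity theorem.

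\textbf{Step 1: Duhamel expansion and truncation.} Iterate the equation $\partial_t \mathbf{a} = \mathcal{C}(\mathbf{a},\overline{\mathbf{a}},\mathbf{a})$ to write $\mathbf{a}(t)$ as a sum over ternary trees of depth at most $N \sim \log L$, plus a remainder $\mathcal{R}_N$.

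\textbf{Step 2: Correlations as couples.} Compute $\mathbb{E}|\widehat{u}(t,k)|^2$ by Wick's theorem. This pairs the leaves of two trees and produces "couples". The leading ("regular") couples reproduce the iterates of the kinetic wave equation up to time $\delta T_{\text{kin}}$, via number-theoretic lattice-point counting. This counting uses genericity of $\beta \notin \mathcal{N}$ and the scaling law $\lambda^2 = L^{d-1}$.

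\textbf{Step 3: Irregular couples via molecules (the main obstacle).} The hard part is showing that all irregular couples contribute $o(1)$. Each couple is encoded as a molecule: a graph whose atoms are the branching nodes and whose bonds are the pairings. One then runs a reduction algorithm that removes atoms and bonds step by step. Each step comes with a counting estimate, and one keeps track of how much gain over the naive bound $L^{d}$ per vertex each step produces. The rigidity theorem states that, after cancelling the regular sub-structures (vines), the resulting count is smaller by a factor $L^{-\nu \rho}$, where $\rho$ measures irregularity. The key combinatorial point is that the reduction algorithm visits the molecule along a spanning-tree structure, so that the total count factorises edge by edge. This paper shows that the algorithm is exactly a Kruskal-type traversal building a spanning tree, which is what makes that factorisation step rigorous and transparent.

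\textbf{Step 4: Remainder and existence.} Control $\mathcal{R}_N$ by a contraction argument for the linearised operator on $[0,\delta T_{\text{kin}}]$. Use random-matrix and tensor norm bounds holding with probability at least $1-L^{-A}$, where $A \geqslant 40d$. This gives smooth existence, and combining Steps 2–4 gives the stated limit uniformly in $t$ and $k$.
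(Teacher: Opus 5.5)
This theorem is not proved in the paper. It is quoted from \cite{DH23}, and the only argument given is the summary in the introduction. Your Steps 1, 2 and 4, and the overall couples/molecules/rigidity framing, follow that summary, so the architecture is right. The genuine gap is in Step 3, where you misidentify what drives the key estimate. The count does not factorise edge by edge along a spanning tree. In \cite{DH23} it is bounded step by step:
$\sup\#\mathfrak{D}(\mathbf{M}_{\mathrm{pre}},\mathrm{Ext}_{\mathrm{pre}})\leqslant C^+\delta^{\Delta\kappa}L^{-(d-1)\Delta\gamma}\sup\#\mathfrak{D}(\mathbf{M}_{\mathrm{pos}},\mathrm{Ext}_{\mathrm{pos}})$.
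Each step removes a bridge or a small group of atoms. Its factor comes from a joint lattice-point count, in which the linear and resonance constraints at the removed atoms couple their bonds. Normal steps ($\Delta\gamma=\Delta\chi$) only reproduce the naive bound $L^{(d-1)n}$. A good step ($\Delta\gamma\geqslant\Delta\chi+\frac{1}{6(d-1)}$) gains $L^{-1/6}$. The factor $L^{-2\nu r}$ in the rigidity theorem comes from showing that the prioritised case analysis produces, outside the type I/II chains, a number of good steps comparable to $r$. That analysis treats bridges first, then configurations around degree-$3$ atoms, then degree-$2$ atoms. This mechanism is the missing idea in your plan.

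Conversely, the spanning tree cannot be the source of any gain. The proof of Theorem~\ref{main_theorem} only uses that every bond is eventually removed and that cycle-creating bonds are skipped. It therefore applies verbatim to any complete reduction, including the Prim-type ordering of \cite{LS11} that the paper contrasts with \cite{DH23}. So this paper does not make any step of \cite{DH23} rigorous: its Kruskal result is an a posteriori interpretation, and \cite{DH23} is complete without it.

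There are also smaller inaccuracies. First, the bonds of the molecule come from both parent--child relations (PC) and leaf pairings (LP). Pairings alone give only $n+1$ of the $2n-1$ bonds and lose the tree's conservation laws. Second, the naive count is $L^{d-1}$ per atom, not $L^{d}$ per vertex. Third, vines are not regular sub-structures. They are irregular structures whose contributions are not individually small, and \cite{DH23} handles them by a separate cancellation argument. The rigidity theorem itself is a pure counting bound and involves no cancellation.
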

In the theorem above, the wave kinetic equation is defined by:
\[
\begin{cases}
	\partial_t \phi = \mathcal{K}(\phi,\phi,\phi) & \text{sur } \mathbb{R}^+ \times \mathbb{R}^d \\
	\phi(0,\cdot) = \phi_{\text{in}}
\end{cases}
\]
with
\begin{equs}
	\mathcal{K}(\phi_1,\phi_2,\phi_3)(k) & =\int_{\Gamma(k)} 
	\left(\phi_1(k_1)\phi_2(k_2)\phi_3(k_3)-\phi_1(k)\phi_2(k_2)\phi_3(k_3) \right.
	\\ & \left. +\phi_1(k_1)\phi_2(k)\phi_3(k_3) -\phi_1(k_1)\phi_2(k_2)\phi_3(k)\right) dk_1 dk_2 d k_3
\end{equs}
and
\begin{equs}
\Gamma(k) & =\{(k_1,k_2,k_3)\in(\mathbb{R}^d)^3,k_1 - k_2 + k_3 - k = 0 \text{ and } \\ &
|k_1|_\beta^2 - |k_2|_\beta^2 + |k_3|_\beta^2 - |k|_\beta^2 = 0\}
\end{equs}

This remarkable result has been obtained  by pushing forward the analysis started in \cite{DH21} where the authors derive the wave kinetic equation arbitrarily close to the kinetic timescale based among other techniques on number theoretic results coming from \cite{BGHS21}. Let us summarise the main ideas of the proof of Theorem \ref{main_theorem_DH23} below.

 The study of this type of problem relies on the notion of decorated ternary signed trees: successive iterations of the Duhamel formula reveal a combinatorial structure in terms of iterated oscillatory integrals, where each term in the perturbative expansion corresponds to a decorated tree. The ansatz consists in writing:
$$
\mathbf{a}=\sum_{n=0}^N\mathbf{a}^{(n)}+\mathbf{R}_N
$$
where $\mathbf{a}^{(n)}$ are oscillatory iterated integral described by decorated trees with $n$ nodes, $\mathbf{R}_N$ is a reminder and $N\sim\log(L)$. If we denote by $ a_k $ the $k$-th Fourier coefficient of $ \mathbf{a} $, we take the correlations $\mathbb{E}[a_k(t)\overline{a_k(t)}]$ in the limit $L\to \infty$ with the scaling law $\lambda^2=L^{d-1}$. The (leading) combinatorial term can be written as a sum of particular paired trees, called couples (see \cite[Section 2.2]{DH23}). The main challenge is to classify and group these different couples in order to better understand their contributions. In \cite[Section 3.2]{DH23}, the couples are split in two sets:
\begin{itemize}
	\item the regular couples: they are defined recursively and they appear to have the same structure of the tree expansion of the wave kinetic equation described below (see \cite[Section 4]{DH23}). The regular couples are the leading term of the couple expansion.
	\item the irregular couples: they are not defined recursively, and so are more difficult to investigate. The aim is to show that the contribution of irregular couples vanish when taking the limit $L\to +\infty$.
\end{itemize}

All the combinatorial objects, trees, couples (see Definitions \eqref{def_tree_couple} and \eqref{decoration}) and their associated analytical interpretation are recalled in Section \ref{Sec::2} with some examples.  To treat the irregular couples, a new method is explained in  \cite[Section 9]{DH23}: a reduction algorithm. This algorithm is used to count the irregular couples with a rigidity theorem. It is one of the main tools  to derive the wave kinetic equation for $ \eqref{NLS_equa} $ in \cite{DH23} but also in \cite{DH26} for the propagation of chaos, in \cite{DH2301} for the full range of scaling laws and in \cite{DH2311} for a long-time derivation (as a black box in the last two works).
Let us mention that a simple version of this algorithm has been used in  \cite[Section 11]{BDNY24} for the analytical aspects of stochastic higher-order terms.
 The idea of \cite{DH23} is to rewrite couples in terms of molecules that are decorated graphs similar to Feynman diagrams. We recall the main definitions of these objects in the second part of Section \ref{Sec::2} (see Definitions \eqref{Molecule} and \eqref{molecule2}).
 Given a molecule, one applies a sophisticated cutting algorithm that produces good analytical estimates. 

The aim of this article is to characterise and to understand the main ideas behind the algorithm that allows to prove the rigidity theorem. Our main theorem is the following:

\begin{theorem} \label{main_theorem}
	The algorithm exposed in \cite[Section 9.3-9.4]{DH23} builds a spanning tree of a molecule $\mathbf{M}$ associated with an irregular couple in a Kruskal manner.
\end{theorem}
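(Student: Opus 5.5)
We will first make precise what ``in a Kruskal manner'' means, and then check that the Deng--Hani reduction satisfies that description.

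\emph{Kruskal-type construction.} A sequence of edges $e_1,\dots,e_m$ of a connected multigraph $\mathbf{M}$ is Kruskal-type if two conditions hold. First, each edge is examined in some priority order that may be updated dynamically. Second, an examined edge is retained exactly when it joins two distinct components of the current forest. The retained edges then form a spanning forest, and a spanning tree when $\mathbf{M}$ is connected. Molecules attached to couples are connected, so the goal is a spanning tree.

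\emph{Encoding the reduction.} Each elementary operation of \cite[Sections 9.3--9.4]{DH23} removes either an atom or a bond, or cuts a bond and splits an atom. These are the degenerate steps, the steps for good and bad vines, the operations on triple bonds and double bonds, and the cuts at degree $3$ and degree $4$ atoms. For each operation we write down its effect on an auxiliary pair $(F,\mathcal{P})$. Here $F$ is the forest of bonds recorded so far, and $\mathcal{P}$ is the partition of the vertices of $\mathbf{M}$ given by the connected components of $F$. The key observation is the counting bookkeeping of \cite{DH23}. Every step either fixes a new decoration vector, which is one ``free'' bond in the counting estimate, or uses a constraint that is already determined. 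In the Euler-characteristic accounting $\chi=E-V+F$, a step of the first kind decreases $V$ by one. We identify the bond whose decoration is fixed in such a step with an edge joining two different blocks of $\mathcal{P}$, and we add it to $F$. A step of the second kind uses a bond that closes a cycle, and that bond is discarded. We check this operation by operation: the case list in \cite{DH23} is finite, so this is a finite, if lengthy, verification.

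\emph{Spanning property.} We use the termination statement of the algorithm, namely that it stops only when the molecule has been reduced to isolated atoms. Together with conservation of the total number of components, this shows that the final forest $F$ has $V(\mathbf{M})-1$ edges and no cycles. Hence $F$ is a spanning tree. It remains to justify the ``Kruskal'' part: the choice of the next operation must act as a greedy priority rule on edges. For this we define a weight or priority function from the rule the algorithm uses to select its next operation. That rule prefers bonds adjacent to low-degree atoms, and then special chains, vines and multiple bonds, with the first ones selected ranked highest. We then show that, at each step, the edge chosen is the one of highest priority that is still admissible.

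\emph{Main obstacle.} The hardest step is the vine and chain operations of Section 9.4. There several bonds are processed in a single operation, and atoms are merged rather than deleted. For these we would break each macro-step into a sequence of elementary edge choices, and show that merging atoms amounts to contracting the retained edges, which is exactly the union step of union--find in Kruskal's algorithm. We would also need a consistency check on the priorities: the dynamic priority must never choose an edge inside a single component while a higher-priority edge between components is still available. We expect this to follow from the fact that \cite{DH23} always first picks a degree-$\le 3$ atom, which is adjacent to a bridge-like edge.
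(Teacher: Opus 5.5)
\textbf{The rule that decides which bonds enter $F$ is the gap.} Your framework is the right one: examine the bonds in a dynamically updated order, keep a bond exactly when it joins two distinct components of the current forest, and conclude from connectedness. But the rule you actually use to build $F$ does not implement it. You identify the retained bonds with those whose decoration vector is ``free'' in the counting of \cite{DH23}, and the discarded ones with those fixed by an existing constraint. This is essentially backwards, and in any case unproved.

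The free vectors are what the exponent $\gamma$ measures ($\gamma$ equals $\chi$ up to the gains of good steps). So they correspond to independent cycles, i.e.\ to cotree bonds. The bonds fixed by the linear conservation laws are the tree-like ones. Two checks confirm the mismatch:
\begin{itemize}
\item Globally, a base molecule with $n$ atoms and $2n-1$ bonds has $\chi(\mathbf{M})=n$, so the number of free vectors is governed by $n$, not by $n-1$.
\item Locally, if the first step is (3R-1), as in the paper's example, the three removed bonds form a star on four distinct atoms and all three must be kept. The counting produces only two free vectors ($\Delta\gamma=\Delta\chi=-2$).
\end{itemize}
Moreover, the counting is relative to the current reduced molecule, not to the blocks of $F$, and in good steps $\Delta\gamma\neq\Delta\chi$. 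So no bond-by-bond correspondence should be expected.

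The paper deliberately ignores $\mathrm{Ext}$, $\gamma$ and $\kappa$. Proposition~\ref{acyclic_steps} adds, at each step, a maximal subset of the bonds removed in that step that keeps $G$ acyclic. The spanning property then follows from three facts: the loop stops only at isolated atoms, so every bond is examined; blocks of $G$ only merge; and the base molecule is connected. Hence a bond joining two final blocks already joined distinct blocks when it was examined, and would have been added. Your ``conservation of the total number of components'' should be replaced by exactly this argument.

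\textbf{Two further corrections.} First, the operations you describe are not those of \cite[Sections 9.3--9.4]{DH23}. There are no vines and no merging or splitting of atoms there. The steps (DA), (TB), (BR), (3S3), (3D3), (3D4G), (3S2G), (3R), (2R) only delete atoms and bonds, so the contraction/union issue you flag as the main obstacle does not arise. The real subtlety you miss is that (3S3-3G) and (3D3-3G) \emph{add} a new bond ($\ell_6$, resp.\ $\ell_5$) that is not a bond of the original molecule and must never enter the tree. This is harmless: once a maximal acyclic subset of the removed bonds has been added, the endpoints of the new bond are already connected in $G$. They are joined by a path of bonds removed in that step (e.g.\ $\ell_2,\ell_1,\ell_4$ for (3S3-3G)), so the new bond always closes a cycle.

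Second, drop your greedy consistency requirement (never choosing an edge inside a component while a higher-priority edge between components is available). It is not part of Kruskal's algorithm. The order of the steps itself defines the dynamic weights, edges are examined in that order, and cycle-closing ones are simply skipped. The justification you sketch for it, that \cite{DH23} always first picks a degree $\leq 3$ atom next to a bridge-like edge, also does not describe the algorithm. Each pass of the loop starts with (BR), which selects a bridge bond rather than a low-degree atom.
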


The proof of Theorem \ref{main_theorem} given in Section \ref{Sec::3:2} relies on an analysis of the various steps of the algorithm described in \cite{DH23} by keeping at each step a maximum number of edges removed. These edges are part of an acyclic graph in construction. This procedure is given in the proposition below

\begin{proposition}\label{acyclic_steps}
	We suppose given a step of the algorithm described in \cite[Section 9.3-9.4]{DH23} on a molecule $ \mathbf{M} $ and an acyclic graph $G=(V,E)$ where $V$ is a set that contains the atoms of $\mathbf{M}$ and $E$ is a  set of edges disjoint from the bonds of $\mathbf{M}$. Then, one can add a maximum number of edges to $G$ that are removed by the algorithm step we are considering such that the new graph obtained is still acyclic.
\end{proposition}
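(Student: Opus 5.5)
This is a case analysis over the step types of the reduction algorithm in \cite[Section 9.3-9.4]{DH23}. Each step does three things: it selects a small set of atoms (one atom, a pair joined by a single or double bond, a triple bond, or a short chain of degree-$3$ atoms); it removes the bonds incident to them, or the bonds between them; and it possibly merges or deletes atoms. For each such step we must say which removed bonds are ``kept'' as edges of $G$, so that $G$ stays acyclic and the number kept is maximal. The natural choice follows Kruskal's rule. Process the removed bonds one by one and add a bond $\{u,v\}$ to $E$ exactly when $u$ and $v$ currently lie in different connected components of $G$. When the algorithm merges atoms, we contract (or identify) the corresponding vertices in $V$, so that the component structure of $G$ is recorded consistently.

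The key steps, in order, are as follows.

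First, formalise the invariant. The vertex set $V$ consists of the atoms of the current molecule $\mathbf{M}$ together with those already removed. The connected components of $G$ restricted to the current atoms are such that two atoms joined by a bond of $\mathbf{M}$ never lie in the same component of $G$ unless the algorithm has created that situation explicitly. This invariant is what makes the maximal number computable step by step.

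Second, go through the elementary operations one at a time.
\begin{enumerate}
\item Removal of a single atom $v$ of degree $\le 4$ with all its bonds. Before this step, the neighbours $v_1,\dots,v_k$ of $v$ may lie in common components of $G$, and $v$ is in its own component because its bonds have not yet been cut. So the maximal acyclic addition consists of one edge to each distinct component among the $v_i$. That gives $k$ minus the number of coincidences, and in the generic case all $k$ bonds are kept.
\item Removal of a bond, or of a double or triple bond, between two atoms $u,v$. Only one of the parallel bonds can be added, since the others would create cycles of length $2$. It is added precisely when $u$ and $v$ lie in different components.
\item The chain and ``good vine'' cuts of Section 9.4. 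Decompose each such cut into a sequence of single-atom and single-bond removals, then apply the previous two items iteratively.
\end{enumerate}
In every case maximality is the standard matroid fact. For a graph $G$ and a multiset $F$ of new edges, every maximal acyclic subset of $G\cup F$ containing $E$ has the same cardinality, namely $|V|$ minus the number of components of $G\cup F$ minus $|E|$. So the greedy choice is maximal, whatever order the removed bonds are processed in.

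The main obstacle is bookkeeping rather than combinatorics. The steps of \cite{DH23} are described in terms of the molecule's own degrees and bond counts, and some of them alter atoms, for example by merging two atoms after a triple-bond removal, or by the splitting operations in Section 9.4. We must check that these alterations are compatible with $G$: an identification in $\mathbf{M}$ must correspond to a contraction in $G$ that does not collapse an existing edge of $E$ into a loop. To handle this, I would verify that every merge in the algorithm occurs between atoms that lie in distinct components of $G$, using the fact that $E$ is disjoint from the current bonds. If some merge violates this, the fallback is not to contract but to record the merged atom as a representative vertex and keep $V$ unchanged. Acyclicity is then preserved trivially, and the greedy-maximality argument above still applies.
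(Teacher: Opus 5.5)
Your argument is correct. Its core is the rule the paper itself applies: keep a deleted bond exactly when its endpoints lie in different components of $G$. The difference is in organisation.

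\textbf{Comparison with the paper.} The paper walks through (DA), (TB), (BR), (3S3), (3D3), (3D4G), (3S2G), (3R) and (2R) one at a time. In each case it asserts that the maximal number of deleted bonds not closing a cycle is added, and it draws typical outcomes. It never justifies that such a greedy (maximal) choice is actually maximum, or that its size is independent of the order fixed on the edges.

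You observe instead that the shape of the step is irrelevant: only the multiset $F$ of deleted bonds matters. The exchange property of the graphic matroid then shows that every maximal acyclic $E\cup F'$ has $|V|-c(V,E\cup F)$ edges. So the greedy $F'$ is maximum, with $|F'|=|V|-c(V,E\cup F)-|E|$; it is $|F'|$, not the whole subset, that carries the $-|E|$.

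Your route gives rigour and uniformity. It also gives that the greedy choice preserves connectivity, that is, $E\cup F'$ has the same components as $E\cup F$. This is exactly what the proof of Theorem~\ref{main_theorem} needs to conclude that $G$ ends up spanning. It also explains the paper's remark that the new bond created in (3S3-3G), and in (3D3-3G), is never kept: its endpoints were already joined through $v_1,v_2$ by bonds of $F$. The paper's case review, in turn, records concretely which bonds survive each step, and that is what gives the Kruskal-type ordering its content.

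\textbf{Claims to delete.} Several auxiliary claims are false or refer to a different algorithm. Your final matroid paragraph does not use any of them, so they should be removed rather than repaired.
\begin{itemize}
\item The proposed invariant fails. After (3S3-3G), $v_3$ and $v_5$ are bonded and lie in one component of $G$. More generally, deleting an atom joins its neighbours in $G$ while they may stay bonded, for example in (3R-1) when two of the $v_j$ are bonded.
\item In your item 1, the deleted atom need not be alone in its component. Bonds at it may already have entered $G$ through (BR), or through (3R-1) applied to a neighbour; in the paper's example, $-1t$ carries an edge of $G$ before it is removed. The correct count is the number of distinct components among $v,v_1,\dots,v_k$ minus one.
\item No step of Sections 9.3--9.4 merges or splits atoms, and there are no vine cuts. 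A triple bond is deleted together with both of its atoms, and chains are handled by repeating the listed steps.
\item The only non-deletion operation is the creation of a new bond in (3S3-3G) and (3D3-3G). You should mention it explicitly and note that it keeps $E$ disjoint from the bonds of $\mathbf{M}$, so the hypothesis of the proposition persists.
\end{itemize}
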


We perform the proof of the previous proposition in Section \ref{Sec::3:1} by reviewing all the steps of the algorithm given in \cite{DH23}.
Then the proof of the main theorem is based on a simple induction on the construction of the acyclic graph that becomes a spanning tree when the algorithm terminates. We say that such a spanning tree is constructed in a Kruskal manner because the algorithm is a loop over many steps ordered according to the arity of the nodes of the molecule. See Table \eqref{long_table} that provides the order of the steps with the number of cycles removed  and the exponent of the analytical bounds for each step. Then, this induces weights on the edges and one chooses to remove edges of minimal weights at each step. These weights are dynamical and evolve after several steps as they depend among other things on the arity of the nodes they are connected to. The arity of each node of the molecule decreases at some point moving along the various steps.

Previously in \cite{LS11} (proof of Theorem 5.1 and Proposition 5.2), the authors used spanning tree algorithm to deal with momentum graphs. This idea is reused in \cite{DST25} (see Appendix A). The algorithm described in these articles is Prim's algorithm and generates a spanning tree of the momentum graph starting from the root. This strategy may be non-optimal as starting from the root is arbitrary. The algorithm described in \cite{DH23} seems to be more powerful as it selects the edges according to a choice of weights. The main idea of this algorithm was reused in \cite{DHM25} to provide a long-time derivation of the Boltzmann equation. This solves a long-standing problem as the short-time derivation has been understood since the 70s in \cite{Land75}. The algorithm  was applied to particle collision history coming from \cite{BGSS22,BGSS23}. The authors used the cluster expansion method to derive diagrams with overlapping and colliding particles.  
One can expect to show that the main algorithm of \cite{DHM25} is also of Kruskal type.

We recall some basic facts about Prim's \cite{Pr57} and Kruskal's \cite{Kru56} algorithms (see also \cite[Chapter 21]{CLRS22} for an introduction to these algorithms).
Prim's algorithm is a greedy algorithm that finds a minimum spanning tree of a connected, weighted graph by starting from a vertex and repeatedly adding the smallest edge connecting the tree to a new vertex until all vertices are included.
\begin{figure}[H]
    \centering
    \begin{minipage}{0.44\textwidth}
        \centering
        \begin{tikzpicture}[scale=0.5, every node/.style={font=\small}, edge/.style={thick}, mst/.style={very thick, red}]
            \coordinate (A) at (0,0);
            \coordinate (B) at (2,1);
            \coordinate (C) at (4,0);
            \coordinate (D) at (1,-2);
            \coordinate (E) at (3,-2);

            \node at (2,-4) {Step 1};

            \draw[edge] (A) -- (B) node[midway, above] {2};
            \draw[edge] (B) -- (C) node[midway, above] {3};
            \draw[edge] (A) -- (D) node[midway, left] {6};
            \draw[edge] (B) -- (D) node[midway, left] {5};
            \draw[edge] (B) -- (E) node[midway, right] {4};
            \draw[edge] (C) -- (E) node[midway, right] {1};
            \draw[edge] (D) -- (E) node[midway, below] {7};

            \draw[mst] (A) -- (B);

            \node[above=0.5pt] at (A) {A};
            \node[above=0.5pt] at (B) {B};
            \node[above=0.5pt] at (C) {C};
            \node[below=0.5pt] at (D) {D};
            \node[below=0.5pt] at (E) {E};
        \end{tikzpicture}
    \end{minipage}
    \hspace{-3.4cm}
    \begin{minipage}{0.44\textwidth}
        \centering
        \begin{tikzpicture}[scale=0.5, every node/.style={font=\small}, edge/.style={thick}, mst/.style={very thick, red}]
            \coordinate (A) at (0,0);
            \coordinate (B) at (2,1);
            \coordinate (C) at (4,0);
            \coordinate (D) at (1,-2);
            \coordinate (E) at (3,-2);

            \node at (2,-4) {Step 2};

            \draw[edge] (A) -- (B) node[midway, above] {2};
            \draw[edge] (B) -- (C) node[midway, above] {3};
            \draw[edge] (A) -- (D) node[midway, left] {6};
            \draw[edge] (B) -- (D) node[midway, left] {5};
            \draw[edge] (B) -- (E) node[midway, right] {4};
            \draw[edge] (C) -- (E) node[midway, right] {1};
            \draw[edge] (D) -- (E) node[midway, below] {7};

            \draw[mst] (A) -- (B);
            \draw[mst] (B) -- (C);

            \node[above=0.5pt] at (A) {A};
            \node[above=0.5pt] at (B) {B};
            \node[above=0.5pt] at (C) {C};
            \node[below=0.5pt] at (D) {D};
            \node[below=0.5pt] at (E) {E};
        \end{tikzpicture}
        \end{minipage}
\hspace{-3.4cm}
        \begin{minipage}{0.44\textwidth}
        \centering
        \begin{tikzpicture}[scale=0.5, every node/.style={font=\small}, edge/.style={thick}, mst/.style={very thick, red}]
            \coordinate (A) at (0,0);
            \coordinate (B) at (2,1);
            \coordinate (C) at (4,0);
            \coordinate (D) at (1,-2);
            \coordinate (E) at (3,-2);

            \node at (2,-4) {Step 3};

            \draw[edge] (A) -- (B) node[midway, above] {2};
            \draw[edge] (B) -- (C) node[midway, above] {3};
            \draw[edge] (A) -- (D) node[midway, left] {6};
            \draw[edge] (B) -- (D) node[midway, left] {5};
            \draw[edge] (B) -- (E) node[midway, right] {4};
            \draw[edge] (C) -- (E) node[midway, right] {1};
            \draw[edge] (D) -- (E) node[midway, below] {7};

            \draw[mst] (A) -- (B);
            \draw[mst] (B) -- (C);
            \draw[mst] (C) -- (E);

            \node[above=0.5pt] at (A) {A};
            \node[above=0.5pt] at (B) {B};
            \node[above=0.5pt] at (C) {C};
            \node[below=0.5pt] at (D) {D};
            \node[below=0.5pt] at (E) {E};
        \end{tikzpicture}
    \end{minipage}
    \hspace{-3.4cm}
    \begin{minipage}{0.44\textwidth}
        \centering
        \begin{tikzpicture}[scale=0.5, every node/.style={font=\small}, edge/.style={thick}, mst/.style={very thick, red}]
            \coordinate (A) at (0,0);
            \coordinate (B) at (2,1);
            \coordinate (C) at (4,0);
            \coordinate (D) at (1,-2);
            \coordinate (E) at (3,-2);

            \node at (2,-4) {Step 4};

            \draw[edge] (A) -- (B) node[midway, above] {2};
            \draw[edge] (B) -- (C) node[midway, above] {3};
            \draw[edge] (A) -- (D) node[midway, left] {6};
            \draw[edge] (B) -- (D) node[midway, left] {5};
            \draw[edge] (B) -- (E) node[midway, right] {4};
            \draw[edge] (C) -- (E) node[midway, right] {1};
            \draw[edge] (D) -- (E) node[midway, below] {7};

            \draw[mst] (A) -- (B);
            \draw[mst] (B) -- (C);
            \draw[mst] (C) -- (E);
            \draw[mst] (B) -- (D);

            \node[above=0.5pt] at (A) {A};
            \node[above=0.5pt] at (B) {B};
            \node[above=0.5pt] at (C) {C};
            \node[below=0.5pt] at (D) {D};
            \node[below=0.5pt] at (E) {E};
        \end{tikzpicture}
    \end{minipage}
   \caption{Example of Prim minimum spanning tree rooted at $A$}
\end{figure}
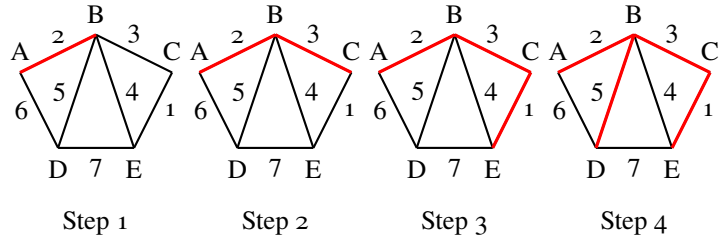

\medskip

Kruskal's algorithm is a greedy algorithm that finds a minimum spanning tree of a connected, weighted graph by repeatedly adding the smallest edge that doesnâ€™t form a cycle until all vertices are part of the spanning tree. We give a short example below.
\begin{figure}[H]
    \centering
    \begin{minipage}{0.44\textwidth}
        \centering
        \begin{tikzpicture}[scale=0.5, every node/.style={font=\small}, edge/.style={thick}, mst/.style={very thick, red}]
            \coordinate (A) at (0,0);
            \coordinate (B) at (2,1);
            \coordinate (C) at (4,0);
            \coordinate (D) at (1,-2);
            \coordinate (E) at (3,-2);

            \node at (2,-4) {Step 1};

            \draw[edge] (A) -- (B) node[midway, above] {2};
            \draw[edge] (B) -- (C) node[midway, above] {3};
            \draw[edge] (A) -- (D) node[midway, left] {6};
            \draw[edge] (B) -- (D) node[midway, left] {5};
            \draw[edge] (B) -- (E) node[midway, right] {4};
            \draw[edge] (C) -- (E) node[midway, right] {1};
            \draw[edge] (D) -- (E) node[midway, below] {7};
            \draw[mst] (C) -- (E);

            \node[above=0.5pt] at (A) {A};
            \node[above=0.5pt] at (B) {B};
            \node[above=0.5pt] at (C) {C};
            \node[below=0.5pt] at (D) {D};
            \node[below=0.5pt] at (E) {E};
        \end{tikzpicture}
    \end{minipage}
    \hspace{-3.4cm}
    \begin{minipage}{0.44\textwidth}
        \centering
        \begin{tikzpicture}[scale=0.5, every node/.style={font=\small}, edge/.style={thick}, mst/.style={very thick, red}]
            \coordinate (A) at (0,0);
            \coordinate (B) at (2,1);
            \coordinate (C) at (4,0);
            \coordinate (D) at (1,-2);
            \coordinate (E) at (3,-2);

            \node at (2,-4) {Step 2};

            \draw[edge] (A) -- (B) node[midway, above] {2};
            \draw[edge] (B) -- (C) node[midway, above] {3};
            \draw[edge] (A) -- (D) node[midway, left] {6};
            \draw[edge] (B) -- (D) node[midway, left] {5};
            \draw[edge] (B) -- (E) node[midway, right] {4};
            \draw[edge] (C) -- (E) node[midway, right] {1};
            \draw[edge] (D) -- (E) node[midway, below] {7};

            \draw[mst] (C) -- (E);
            \draw[mst] (A) -- (B);

            \node[above=0.5pt] at (A) {A};
            \node[above=0.5pt] at (B) {B};
            \node[above=0.5pt] at (C) {C};
            \node[below=0.5pt] at (D) {D};
            \node[below=0.5pt] at (E) {E};
        \end{tikzpicture}
    \end{minipage}
    \hspace{-3.4cm}
    \begin{minipage}{0.44\textwidth}
        \centering
        \begin{tikzpicture}[scale=0.5, every node/.style={font=\small}, edge/.style={thick}, mst/.style={very thick, red}]
            \coordinate (A) at (0,0);
            \coordinate (B) at (2,1);
            \coordinate (C) at (4,0);
            \coordinate (D) at (1,-2);
            \coordinate (E) at (3,-2);

            \node at (2,-4) {Step 3};

            \draw[edge] (A) -- (B) node[midway, above] {2};
            \draw[edge] (B) -- (C) node[midway, above] {3};
            \draw[edge] (A) -- (D) node[midway, left] {6};
            \draw[edge] (B) -- (D) node[midway, left] {5};
            \draw[edge] (B) -- (E) node[midway, right] {4};
            \draw[edge] (C) -- (E) node[midway, right] {1};
            \draw[edge] (D) -- (E) node[midway, below] {7};

            \draw[mst] (C) -- (E);
            \draw[mst] (A) -- (B);
            \draw[mst] (B) -- (C);

            \node[above=0.5pt] at (A) {A};
            \node[above=0.5pt] at (B) {B};
            \node[above=0.5pt] at (C) {C};
            \node[below=0.5pt] at (D) {D};
            \node[below=0.5pt] at (E) {E};
        \end{tikzpicture}
    \end{minipage}
    \hspace{-3.4cm}
    \begin{minipage}{0.44\textwidth}
        \centering
        \begin{tikzpicture}[scale=0.5, every node/.style={font=\small}, edge/.style={thick}, mst/.style={very thick, red}]
            \coordinate (A) at (0,0);
            \coordinate (B) at (2,1);
            \coordinate (C) at (4,0);
            \coordinate (D) at (1,-2);
            \coordinate (E) at (3,-2);

            \node at (2,-4) {Step 4};

            \draw[edge] (A) -- (B) node[midway, above] {2};
            \draw[edge] (B) -- (C) node[midway, above] {3};
            \draw[edge] (A) -- (D) node[midway, left] {6};
            \draw[edge] (B) -- (D) node[midway, left] {5};
            \draw[edge] (B) -- (E) node[midway, right] {4};
            \draw[edge] (C) -- (E) node[midway, right] {1};
            \draw[edge] (D) -- (E) node[midway, below] {7};

            \draw[mst] (C) -- (E);
            \draw[mst] (A) -- (B);
            \draw[mst] (B) -- (C);
            \draw[mst] (B) -- (D);

            \node[above=0.5pt] at (A) {A};
            \node[above=0.5pt] at (B) {B};
            \node[above=0.5pt] at (C) {C};
            \node[below=0.5pt] at (D) {D};
            \node[below=0.5pt] at (E) {E};
        \end{tikzpicture}
    \end{minipage}
    \caption{Example of Kruskal minimum spanning tree}
\end{figure}
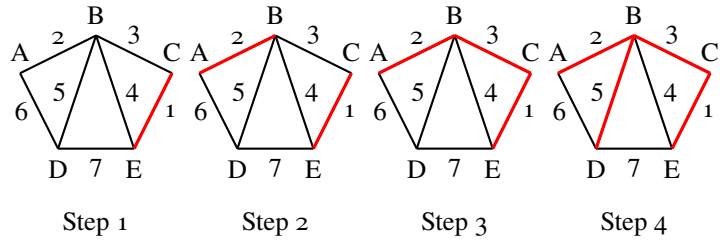
Both algorithms iteratively add edges to the spanning tree which is under construction. These edges are called safe edges in the sense that they do not create any cycles (see the generic spanning tree algorithm in \cite[Chapter 21]{CLRS22}) In this sense, Proposition \ref{acyclic_steps} guarantees that during the algorithm proposed in \cite{DH23}, one can add safe edges to the acyclic graph that will be in the end a spanning tree.

In the end, it is not so surprising that one can interpret the main algorithm of \cite{DH23} as a Kruskal type algorithm. Indeed, Kruskal spanning trees appear naturally in Quantum Field Theory when one wants to compute the BPHZ renormalisation (see \cite{BP57,H69,Z69}) on a Feynman diagram. They are used when one performs the multi-scale analysis forming the so-called Hepp sector that can be represented with the help of Gallavotti-Nicolo decorated trees (see \cite{GNI85I,GNI85II}). These techniques are surveyed in the book \cite{RiV91} and the notion of Kruskal spanning tree is mentioned in \cite{RW14}. They are also used to estimate the convergence radius of virial series, as in \cite{PY17}. Such a technique has been used in singular stochastic partial differential equations within the framework of Regularity Structures in \cite{HQ18} where the terminology of Kruskal algorithm is used. We terminate the survey of this literature by mentioning the works \cite{ESY07,ESY08} where the authors consider the case of the linear equation. They proceed with a careful analysis of the various Feynman diagrams and their nested structure. One may imagine reformulating their proof using a Kruskal type algorithm.

\subsection*{Acknowledgements}

{\small
	Y.B. and V.C. gratefully acknowledge funding support from the European Research Council (ERC) through the ERC Starting Grant Low Regularity Dynamics via Decorated Trees (LoRDeT), grant agreement No.\ 101075208. Views and opinions expressed are however those of the author(s) only and do not necessarily reflect those of the European Union or the European Research Council. Neither the European Union nor the granting authority can be held responsible for them.  Y.B. also thanks the "Institut des Hautes Études Scientifiques" (IHES) for a long research stay from 7th of January to 21st of March 2025, where the main idea for this work emerged.
	Y. B. gratefully acknowledges Thierry Bodineau for interesting discussions on the Boltzmann kinetic equation and for suggesting to apply for a long stay at IHES.
	 Y. B. thanks José Bruned for interesting discussions on mechanical systems where one uses similar graphical tools.
	Y. B. thanks Ismaël Bailleul for pointing out the use of spanning trees in Quantum Field Theory.
} 

\section{Trees, couples and molecules}

\label{Sec::2}

In this section, we recall the basic definitions from \cite{DH23} on the combinatorial objects that appear in the main algorithm studied in the next section.

\begin{definition}[Tree and couple] \label{def_tree_couple}
A tree is a signed ternary tree such that if a node has sign $\sigma$, 
then its children have signs, from left to right, $(\sigma,-\sigma,\sigma)$. We denote $\mathcal{N}(\mathcal{T}),\mathcal{L}(\mathcal{T})$ the set of nodes and the set of leaves of $\mathcal{T}$, and, if $\mathfrak{n}\in\mathcal{N}(\mathcal{T})$, $\sigma_\mathfrak{n}$ is the sign of $\mathfrak{n}$. We also denote $\mathfrak{r}(\mathcal{T})$ the root of $\mathcal{T}$.
A couple is a pair of trees $(\mathcal{T}^+,\mathcal{T}^-)$ such that $\sigma_{\mathfrak{r}(\mathcal{T}^\pm)}=\pm$, with a pairing $\mathcal{P}$ of the leaves $\mathcal{L}(\mathcal{T}^+)\cup\mathcal{L}(\mathcal{T}^-)$ in such a way that if $\{\mathfrak{l},\mathfrak{l}'\}\in\mathcal{P}$, then $\sigma_{\mathfrak{l}}=-\sigma_{\mathfrak{l}'}$.
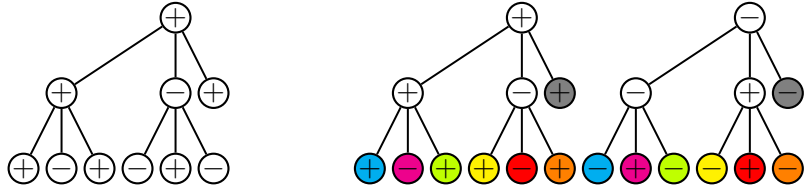
\begin{figure}[H]
\centering
\begin{tikzpicture}[scale=1, every node/.style={draw, circle, thick, minimum size=2mm, inner sep=0pt}]
	
	\node (1) at (0,0) {$+$};
	
	\node (11) [shift={(-1.5,-1)}] at (1) {$+$};
	\node (12) [shift={(0,-1)}] at (1) {$-$};
	\node (13) [shift={(0.5,-1)}] at (1) {$+$};
	\draw[-] (1) -- (11);
	\draw[-] (1) -- (12);
	\draw[-] (1) -- (13);
	
	\node (121) [shift={(-0.5,-1)}] at (12) {$-$};
	\node (122) [shift={(0,-1)}] at (12) {$+$};
	\node (123) [shift={(0.5,-1)}] at (12) {$-$};
	\draw[-] (12) -- (121);
	\draw[-] (12) -- (122);
	\draw[-] (12) -- (123);
	
	\node (111) [shift={(-0.5,-1)}] at (11) {$+$};
	\node (112) [shift={(0,-1)}] at (11) {$-$};
	\node (113) [shift={(0.5,-1)}] at (11) {$+$};
	\draw[-] (11) -- (111);
	\draw[-] (11) -- (112);
	\draw[-] (11) -- (113);
	
\end{tikzpicture} \qquad \qquad 
\begin{tikzpicture}[scale=1, every node/.style={draw, circle, thick, minimum size=2mm, inner sep=0pt}]

\node (A1) at (-1.5,0) {$+$};
\node (A11) [shift={(-1.5,-1)}] at (A1) {$+$};
\node (A12) [shift={(0,-1)}] at (A1) {$-$};
\node (A13) [shift={(0.5,-1)},fill=gray] at (A1) {$+$};
\draw[-] (A1) -- (A11);
\draw[-] (A1) -- (A12);
\draw[-] (A1) -- (A13);

\node (A111) [shift={(-0.5,-1)}, fill=cyan] at (A11) {$+$};
\node (A112) [shift={(0,-1)}, fill=magenta] at (A11) {$-$};
\node (A113) [shift={(0.5,-1)}, fill=lime] at (A11) {$+$};
\draw[-] (A11) -- (A111);
\draw[-] (A11) -- (A112);
\draw[-] (A11) -- (A113);

\node (A121) [shift={(-0.5,-1)}, fill=yellow] at (A12) {$+$};
\node (A122) [shift={(0,-1)}, fill=red] at (A12) {$-$};
\node (A123) [shift={(0.5,-1)}, fill=orange] at (A12) {$+$};
\draw[-] (A12) -- (A121);
\draw[-] (A12) -- (A122);
\draw[-] (A12) -- (A123);

\node (B1) at (1.5,0) {$-$};
\node (B11) [shift={(-1.5,-1)}] at (B1) {$-$};
\node (B12) [shift={(0,-1)}] at (B1) {$+$};
\node (B13) [shift={(0.5,-1)},fill=gray] at (B1) {$-$};
\draw[-] (B1) -- (B11);
\draw[-] (B1) -- (B12);
\draw[-] (B1) -- (B13);

\node (B111) [shift={(-0.5,-1)}, fill=cyan] at (B11) {$-$};
\node (B112) [shift={(0,-1)}, fill=magenta] at (B11) {$+$};
\node (B113) [shift={(0.5,-1)}, fill=lime] at (B11) {$-$};
\draw[-] (B11) -- (B111);
\draw[-] (B11) -- (B112);
\draw[-] (B11) -- (B113);

\node (B121) [shift={(-0.5,-1)}, fill=yellow] at (B12) {$-$};
\node (B122) [shift={(0,-1)}, fill=red] at (B12) {$+$};
\node (B123) [shift={(0.5,-1)}, fill=orange] at (B12) {$-$};
\draw[-] (B12) -- (B121);
\draw[-] (B12) -- (B122);
\draw[-] (B12) -- (B123);
\end{tikzpicture}
\caption{Examples of a tree and a couple. 
Signs are displayed inside each node of the tree.
	Paired leaves have the same color and they come with different sign.}
		\label{figure_tree}
\end{figure}
\end{definition}
\begin{definition}[Decoration]
	\label{decoration}
Let $\mathcal{T}$ a tree. A tuple $(k_{\mathfrak{n}})_{\mathfrak{n}\in\mathcal{N}(\mathcal{T})}\in (\mathbb{Z}_L^d)^{\mathcal{N}(\mathcal{T})}$ is a decoration of $\mathcal{T}$ if, for all $\mathfrak{n}\in\mathcal{N}(\mathcal{T})$ :
$$
\sigma_\mathfrak{n}k_\mathfrak{n}=\sigma_{\mathfrak{n}_1}k_{\mathfrak{n}_1}+\sigma_{\mathfrak{n}_2}k_{\mathfrak{n}_2}+\sigma_{\mathfrak{n}_3}k_{\mathfrak{n}_3}
$$
where $\mathfrak{n}_1,\mathfrak{n}_2,\mathfrak{n}_3$ are the children of $\mathfrak{n}$ from left to right.

Let $\mathcal{Q}=(\mathcal{T}^+,\mathcal{T}^-,\mathcal{P})$ a couple. A pair $(\mathcal{D}^+,\mathcal{D}^-)$ of decorations of $\mathcal{T}^\pm$ is a decoration of $\mathcal{Q}$, if, for all $\{\mathfrak{l},\mathfrak{l}'\}\in\mathcal{P}$, $k_{\mathfrak{l}}=k_{\mathfrak{l}'}$.
\end{definition}
In \cite[Section 2]{DH23}, the authors show that, denoting $b$ some remainder:
\begin{align*}
&\mathbb{E}\left[ a_k(t^+) \overline{a_k(t^-)} \right] \\
&= \sum_{(\mathcal{T}^+, \mathcal{T}^-) : \lvert\mathcal{N}(\mathcal{T}^\pm)\rvert \leqslant N} \left( \dfrac{\delta}{2 L^{d-1}} \right)^{\lvert\mathcal{N}(\mathcal{T}^+)\rvert +\lvert \mathcal{N}(\mathcal{T}^-)\rvert} \left( \prod_{\mathfrak{n} \in \mathcal{N}(\mathcal{T}^+) \cup \mathcal{N}(\mathcal{T}^-)} i \sigma_{\mathfrak{n}} \right) \\
&\quad \sum_{\mathcal{P} : \mathcal{Q}=(\mathcal{T}^+, \mathcal{T}^-, \mathcal{P}) \text{ couple}} \sum_{\mathcal{E}} \varepsilon(\mathcal{E}) \mathcal{B}_\mathcal{Q}(t^+, t^-, \delta L^2 \Omega[\mathcal{N}(\mathcal{T}^+) \cup \mathcal{N}(\mathcal{T}^-)]) \\ & \prod_{\mathfrak{l} \in \mathcal{L}^+} \phi_{\textup{in}}(k_\mathfrak{l}) + O(b)
\end{align*}
where $$\mathcal{B}_\mathcal{Q}(t^+, t^-, \alpha[\mathcal{N}(\mathcal{T}^+) \cup \mathcal{N}(\mathcal{T}^-)]) = \mathcal{A}_{\mathcal{T}^+}(t^+, \alpha[\mathcal{N}(\mathcal{T}^+)])\mathcal{A}_{\mathcal{T}^-}\left(t^-, \alpha[\mathcal{N}(\mathcal{T}^-)]\right),$$ the map $\varepsilon$ is a sign, $\mathcal{L}^+$ is the set of leaves of $\mathcal{Q}$ with $+$ sign, and, for all $\mathfrak{n}\in\mathcal{N}(\mathcal{T}^+)\cup\mathcal{N}(\mathcal{T}^-)$, $\Omega_\mathfrak{n}=|k_{\mathfrak{n}_1}|_\beta^2 - |k_{\mathfrak{n}_2}|_\beta^2 + |k_{\mathfrak{n}_3}|_\beta^2 - |k_{\mathfrak{n}}|_\beta^2$. We can give explicit formulae of $\mathcal{A},\mathcal{B}$ using iterated integrals:
\begin{align*}
\mathcal{A}_\mathcal{T}\left(t,\alpha[\mathcal{N}(\mathcal{T})]\right)
&=
\int_{\mathcal{D}\left(\mathcal{N}(\mathcal{T}),t\right)}
\prod_{\mathfrak{n}\in\mathcal{N}(\mathcal{T})}
e^{i\pi\sigma_{\mathfrak{n}}\alpha_\mathfrak{n}t_\mathfrak{n}}
\, dt_\mathfrak{n},
\\
\mathcal{B}_{\mathcal{Q}}(t^+,t^-,\alpha[\mathcal{N}(\mathcal{T}^+)\cup\mathcal{N}(\mathcal{T}^-)])
&=
\int_{\mathcal{E}\left(\mathcal{N}(\mathcal{T})\cup \mathcal{N}(\mathcal{T}^-),t^+,t^-\right)} \\ & 
\prod_{\mathfrak{n}\in\mathcal{N}(\mathcal{T}^+)\cup\mathcal{N}(\mathcal{T}^-)}
e^{i\pi\sigma_{\mathfrak{n}}\alpha_\mathfrak{n}t_\mathfrak{n}}
\, dt_\mathfrak{n}
\end{align*}
with:
$$
\begin{aligned}
&\mathcal{D}\left(\mathcal{N}(\mathcal{T}),t\right) = \left\{ t[\mathcal{N}(\mathcal{T})], \; 0 < t_{\mathfrak{n}'} < t_\mathfrak{n} < t \text{ if } \mathfrak{n}' \text{ is a child of } \mathfrak{n} \right\}\\
&\mathcal{E}\left(\mathcal{N}(\mathcal{T})\cup \mathcal{N}(\mathcal{T}^-),t^+,t^-\right)\\& = \left\{t[\mathcal{N}(\mathcal{T})\cup \mathcal{N}(\mathcal{T}^-)], \; 0 < t_{\mathfrak{n}'} < t_\mathfrak{n} \text{ si } \mathfrak{n}' \text{ is a child of } \mathfrak{n}, t_\mathfrak{n} < t^\pm \text{ if } \mathfrak{n} \in \mathcal{N}\left(\mathcal{T}^\pm\right) \right\}.
\end{aligned}
$$
If $\mathcal{T}$ is the tree given in Figure \ref{figure_tree}, we have:
$$
\begin{aligned}
\mathcal{A}_\mathcal{T}(t,\alpha[\mathcal{N}(\mathcal{T})]) =\int_0^t dt_1\;e^{i\pi\sigma_1\alpha_1t_1}\frac{e^{i\pi\sigma_{11}\alpha_{11}t_1}-1}{i\pi\sigma_{11}\alpha_{11}}
\frac{e^{i\pi\sigma_{12}\alpha_{12}t_1}-1}{i\pi\sigma_{12}\alpha_{12}}
\end{aligned}
$$
where, if $i$ is the label of a node, $i1,i2,i3$ are the labels of its children from left to right.
\begin{definition}[Molecule]
	\label{Molecule}
	A molecule is a directed graph whose vertices are called \emph{atoms} and whose edges are called \emph{bonds}. We write $l\sim v$ if $l$ is a bond is arriving at $v$. We further assume that:
	\begin{itemize}
		\item Each atom has at most $2$ incoming bonds and $2$ outgoing bonds.
		\item There is no connected component such that each of its atoms has $4$ bonds.
	\end{itemize}
	We denote $\chi$ the Euler characteristic of the molecule (i.e. the number of independent cycles).
We also define the following two molecular subgroups:
	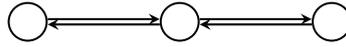
\begin{figure}[H]
		\begin{center}
			\begin{tikzpicture}[>=stealth, scale=1]
\tikzset{
	nodec/.style={circle, draw, minimum size=5mm, inner sep=0pt},
	arrow/.style={->, thick},
	dashedlink/.style={dashed, thick}
}

\node[nodec] (A) at (0,0) {};
\node[nodec] (B) at (2,0) {};
\node[nodec] (C) at (4,0) {};

\draw[arrow] ([yshift=1pt]A.east) -- ([yshift=1pt]B.west);
\draw[arrow] ([yshift=-1pt]B.west) -- ([yshift=-1pt]A.east);

\draw[arrow] ([yshift=1pt]B.east) -- ([yshift=1pt]C.west);
\draw[arrow] ([yshift=-1pt]C.west) -- ([yshift=-1pt]B.east);

\end{tikzpicture}
		\end{center}
		\caption{Type I molecular chain}
	\end{figure}
	\begin{figure}[H]
		\begin{center}
			\begin{tikzpicture}[>=stealth, scale=1]

\tikzset{
	nodec/.style={circle, draw, minimum size=5mm, inner sep=0pt},
	arrow/.style={->, thick},
	vertlink/.style={thick},
	dashedlink/.style={dashed, thick}
}

\def\xsep{2}
\def\ysep{1}

\foreach \i in {1,...,4} {
	\node[nodec] (T\i) at ({(\i-1)*\xsep}, \ysep) {};
}

\foreach \i in {1,...,4} {
	\node[nodec] (B\i) at ({(\i-1)*\xsep}, 0) {};
}

\foreach \i in {1,...,4} {
	\draw[vertlink] ([xshift=-1pt]T\i.south) -- ([xshift=-1pt]B\i.north);
	\draw[vertlink] ([xshift= 1pt]T\i.south) -- ([xshift= 1pt]B\i.north);
}

\draw[arrow] (T1) -- (T2);
\draw[arrow] (T3) -- (T2);
\draw[arrow] (T3) -- (T4);

\draw[arrow] (B2) -- (B1);
\draw[arrow] (B2) -- (B3);
\draw[arrow] (B4) -- (B3);

\draw[dashedlink] (-0.5, \ysep) -- (T1);
\draw[dashedlink] (-0.5, 0) -- (B1);

\draw[arrow] (B4) -- ({3*\xsep+0.5}, 0);
\draw[arrow] ({3*\xsep+0.5}, \ysep) -- (T4);

\end{tikzpicture}
		\end{center}
		\caption{Type II molecular chain}
	\end{figure}
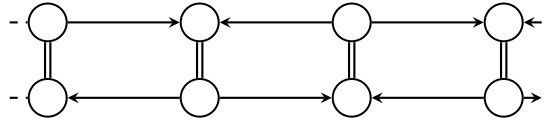
\end{definition}

\begin{proposition}
	Between two atoms, there is at most one triple bond. If there are $n$ atoms, then there are at most $2n-1$ bonds. In the case of equality, we speak of a base molecule.
	A base molecule is connected. It has two atoms of degree $3$ or one atom of degree $2$, and the others are of degree $4$.
\end{proposition}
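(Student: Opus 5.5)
The whole statement should follow from a degree count based on the two axioms of Definition \ref{Molecule}. The first axiom says every atom has at most two incoming and at most two outgoing bonds, so every atom has degree at most $4$. The second axiom forbids any connected component in which every atom has degree $4$. I read ``at most one triple bond'' as saying that between two atoms there are at most three bonds, that is, the multiplicity never exceeds that of a triple bond.

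\emph{Multiplicity.} Suppose two atoms $u\neq v$ were joined by four or more bonds. Since degrees are bounded by $4$, there are exactly four bonds between them. Then $u$ and $v$ have degree $4$ and have no other neighbours. So $\{u,v\}$ is a connected component in which every atom has degree $4$, which the second axiom forbids. Hence there are at most three bonds between any two atoms. If bonds from an atom to itself are allowed, the same argument applies: a vertex whose four bond-ends are used up internally is again a forbidden component.

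\emph{Edge bound.} Decompose the molecule into its connected components $C_1,\dots,C_k$, with $k\ge 1$, having $n_j$ atoms and $e_j$ bonds. By the handshake lemma,
\[
2e_j=\sum_{v\in C_j}\deg v\le 4n_j .
\]
The second axiom says the inequality is strict, so $2e_j\le 4n_j-1$. Since the left side is even, this improves to $2e_j\le 4n_j-2$, that is, $e_j\le 2n_j-1$. Summing over the components gives
\[
e\le 2n-k\le 2n-1 .
\]

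\emph{Equality case.} If $e=2n-1$, both inequalities in the edge bound are equalities. First, $k=1$, so a base molecule is connected. Second, the deficit satisfies $\sum_v (4-\deg v)=2$. Each term is a nonnegative integer, so either a single atom has degree $2$, or exactly two atoms have degree $3$, and all other atoms have degree $4$.

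The only delicate point is the parity step that turns the strict inequality $2e_j<4n_j$ into $e_j\le 2n_j-1$; everything else is bookkeeping. I do not expect a real obstacle beyond fixing the interpretation of the phrase ``at most one triple bond'' as stated above.
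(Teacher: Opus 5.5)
Your proof is correct. The paper gives no argument of its own and only cites \cite[Proposition 9.2]{DH23}, so your per-component handshake count is a natural self-contained proof of the cited fact: the no-saturated-component axiom excludes quadruple bonds and gives $e_j\le 2n_j-1$, and in the equality case the deficit identity $\sum_v(4-\deg v)=2$ yields connectedness and the degree pattern. You also read ``at most one triple bond'' correctly as ``at most a triple bond'', i.e.\ no quadruple bonds. The parity step is not delicate: $2e_j<4n_j$ already gives $e_j<2n_j$, hence $e_j\le 2n_j-1$ by integrality.
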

\begin{proof}
	See  \cite[Proposition 9.2]{DH23}.
\end{proof}
\begin{definition}[Base molecule associated with a couple] \label{molecule2}
	Let $\mathcal{Q}$ be a nontrivial couple. The molecule associated with $\mathcal{Q}$ is defined by:
	\begin{itemize}
		\item Atom: quadruple $(\mathfrak{n},\mathfrak{n}_1,\mathfrak{n}_2,\mathfrak{n}_3)$ where $\mathfrak{n}$ is a parent and $\mathfrak{n}_1,\mathfrak{n}_2,\mathfrak{n}_3$ are its children from left to right.
		\item Bond: two atoms are connected by a bond if
		\begin{center}
			(PC) -- a node is a parent in one atom $v_1$ and a child in the other $v_2$. If $\mathfrak{n}$ is a parent in $v_1$, a child in $v_2$, and $\sigma_\mathfrak{n}=+$, then the bond goes from $v_1$ to $v_2$. The bond goes from $v_2$ to $v_1$ otherwise.
		\end{center}
		or
		\begin{center}
			(LP) -- one leaf of each atom forms a pair of $\mathcal{Q}$. The bond goes from the atom containing the leaf with sign $-$ to the atom containing the leaf with sign $+$.
		\end{center}
	\end{itemize}
	The molecule thus formed is a base molecule.
\end{definition}

\begin{proof}
	See  \cite[Proposition 9.4]{DH23}.
\end{proof}
Below, we provide a couple and its associated molecule:
\begin{figure}[H]
\centering
\begin{tikzpicture}[scale=1, every node/.style={draw, circle, thick, minimum size=2mm, inner sep=0pt}]
	
	\node (A1) at (-1.5,0) {$+$};
	\node (A11) [shift={(-1.5,-1)}] at (A1) {$+$};
	\node (A12) [shift={(0,-1)}] at (A1) {$-$};
	\node (A13) [shift={(0.5,-1)},fill=gray] at (A1) {$+$};
	\draw[-] (A1) -- (A11);
	\draw[-] (A1) -- (A12);
	\draw[-] (A1) -- (A13);
	
	\node (A111) [shift={(-0.5,-1)}, fill=cyan] at (A11) {$+$};
	\node (A112) [shift={(0,-1)}, fill=magenta] at (A11) {$-$};
	\node (A113) [shift={(0.5,-1)}, fill=lime] at (A11) {$+$};
	\draw[-] (A11) -- (A111);
	\draw[-] (A11) -- (A112);
	\draw[-] (A11) -- (A113);
	
	\node (A121) [shift={(-0.5,-1)}, fill=yellow] at (A12) {$+$};
	\node (A122) [shift={(0,-1)}, fill=red] at (A12) {$-$};
	\node (A123) [shift={(0.5,-1)}, fill=orange] at (A12) {$+$};
	\draw[-] (A12) -- (A121);
	\draw[-] (A12) -- (A122);
	\draw[-] (A12) -- (A123);
	
	\node (B1) at (1.5,0) {$-$};
	\node (B11) [shift={(-1.5,-1)}] at (B1) {$-$};
	\node (B12) [shift={(0,-1)}] at (B1) {$+$};
	\node (B13) [shift={(0.5,-1)},fill=gray] at (B1) {$-$};
	\draw[-] (B1) -- (B11);
	\draw[-] (B1) -- (B12);
	\draw[-] (B1) -- (B13);
	
	\node (B111) [shift={(-0.5,-1)}, fill=cyan] at (B11) {$-$};
	\node (B112) [shift={(0,-1)}, fill=magenta] at (B11) {$+$};
	\node (B113) [shift={(0.5,-1)}, fill=lime] at (B11) {$-$};
	\draw[-] (B11) -- (B111);
	\draw[-] (B11) -- (B112);
	\draw[-] (B11) -- (B113);
	
	\node (B121) [shift={(-0.5,-1)}, fill=yellow] at (B12) {$-$};
	\node (B122) [shift={(0,-1)}, fill=red] at (B12) {$+$};
	\node (B123) [shift={(0.5,-1)}, fill=orange] at (B12) {$-$};
	\draw[-] (B12) -- (B121);
	\draw[-] (B12) -- (B122);
	\draw[-] (B12) -- (B123);
	
\end{tikzpicture}
\qquad \qquad 
\begin{tikzpicture}[scale=1, every node/.style={draw, circle, thick, minimum size=3mm,inner sep=0pt}]
\node (T1) at (0,0) {};
\node (T2) at (2,0) {};
\node (T4) at (1,0) {};
\node (B1) at (0,-1) {};
\node (B2) at (2,-1) {};
\node (B4) at (1,-1) {};

\draw[<-, cyan] (T1) to[out=-120, in=120] (B1);
\draw[->, magenta] (T1) -- (B1);
\draw[<-, lime] (T1) to[out=-60, in=60] (B1);

\draw[<-, yellow] (T2) to[out=-120, in=120] (B2);
\draw[->, red] (T2) -- (B2);
\draw[<-, orange] (T2) to[out=-60, in=60] (B2);

\draw[->, gray] (B4) to (T4);

\draw[->] (T1) -- (T4);
\draw[<-] (T2) -- (T4);
\draw[<-] (B1) -- (B4);
\draw[->] (B2) -- (B4);

\end{tikzpicture}
\caption{Couple and its associated base molecule. The colors of the pairings of the couple correspond to the colors of the edges in the molecule.}
\end{figure}
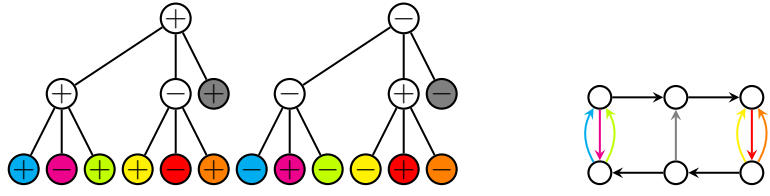
\begin{definition}[Admissible decorations]
	Let $\mathbf{M}$ be a molecule and $S$ a set of atoms, called degenerated atoms. We fix:
	\begin{itemize}
		\item For all bond $l$ of $\mathbf{M}$, $a_l\in\mathbb{Z}_L^d$
		\item For every non-isolated atom $v$ of $\mathbf{M}$, $c_v\in\mathbb{Z}_L^d$, with $c_v=0$ if $v$ has degree $4$
		\item For every non-isolated atom $v$ of $\mathbf{M}$, $\Gamma_v\in\mathbb{R}$
		\item For every $v\in S$ of degree at most $3$, $f_v\in\mathbb{Z}_L^d$
	\end{itemize}
	We denote by $\mathfrak{D}(\mathbf{M},S,a,c,\Gamma,f)$ the set of decorations $(k_l)_{l\in \mathbf{M}}$ such that, for all bond $l$ of $\mathbf{M}$:
 \begin{equs}
 	\begin{aligned}
 		k_l\in\mathbb{Z}_L^d, \quad & \lvert k_l-a_l\rvert\leqslant 1, \quad \sum_{v:l\sim v}\sigma(v,l)k_l=c_v,
 		\\ & \displaystyle\left\lvert \sum_{l:l\sim v}\sigma(v,l)\lvert k_l\rvert_\beta^2-\Gamma_v\right\rvert\leqslant \dfrac{1}{\delta L^2},
 		\end{aligned}
 		\end{equs}
	 where $\sigma(l,v)$ equals $1$ if $l$ leaves $v$, and $-1$ otherwise. Otherwise
		 if $v\in S$, the $k_l$ for $l\sim v$ are equal if $v$ is non-isolated, and equal to $f_v$ if $d(v)<4$.
		 If $v\notin S$ is non-isolated and $l_1,l_2\sim v$ are in opposite directions, then $k_{\mathfrak{l}_1}\neq k_{\mathfrak{l}_2}$.
	
	We write more simply $\mathfrak{D}(\mathbf{M})$. If additional conditions $\mathrm{Ext}$ of the form $k_{l_1}-k_{l_2}\in E$ with $E\subset\mathbb{Z}_L^d$ are added, we write $\mathfrak{D}(\mathbf{M},\mathrm{Ext})$.
\end{definition}

\begin{remark}
	A decoration of a couple $\mathcal{Q}$ induces a decoration of the associated base molecule $\mathbf{M}$, see \cite[Remark 9.10]{DH23}.
\end{remark}

\section{Rigidity theorem and algorithm}
We start the section by recalling the Rigidity Theorem, which is the main analytical result given in \cite{DH23} that provides analytical bounds on base molecules.
\begin{theorem}[Rigidity]
	Let $\mathbf{M}$ be a base molecule with $n$ atoms, where $1\leqslant n \leqslant (\log L)^3$, with no triple bond. Then $\mathfrak{D}(\mathbf{M})$ is the union of at most $C^n$ subsets of the form $\mathfrak{D}(\mathbf{M}, \mathrm{Ext})$, and there exist $r\in\{ 1,...,n\}$ and a collection of at most $Cr$ molecular chains of type I or II in $\mathbf{M}$ such that:
	\begin{itemize}
		\item The number of atoms in each molecular chain is bounded above by $Cr$
		\item For any molecular chain of type II, for every bond $l_1,l_2$ in this molecular chain facing each other, $\mathrm{Ext}$ includes the condition $k_{l_1}=k_{l_2}$
		\item One has
		$$
		\sup\#\mathfrak{D}(\mathbf{M},\mathrm{Ext})\leqslant (C^+)^n\dfrac{1}{\delta^{(n+m)/2}}L^{(d-1)n-2\nu r}
		$$
		where $\mathbf{M}$ is the number of atoms in the molecular chains of type I, $\nu=\dfrac{1}{100d}$, and $C^+$ is some constant depending on $(d,\beta,\varphi_{\text{in}})$.
	\end{itemize}
\end{theorem}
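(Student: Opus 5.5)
The approach follows \cite[Section 9]{DH23}: bound $\#\mathfrak{D}(\mathbf{M},\mathrm{Ext})$ by an induction driven by a reduction (cutting) algorithm that removes atoms and bonds from $\mathbf{M}$ one elementary operation at a time. Each operation is paired with an elementary lattice-point counting estimate. The first step is to list these estimates. For an atom of degree $2$, $3$ or $4$ whose remaining bonds are constrained by the linear relation $\sum \sigma(v,l)k_l=c_v$ and the resonance relation $|\sum \sigma(v,l)|k_l|_\beta^2-\Gamma_v|\le (\delta L^2)^{-1}$, we count the admissible values of its free vectors uniformly in the others. Here $\beta\notin\mathcal{N}$ is used for the number-theoretic bounds of \cite[Lemma A.8]{DH23} and \cite{BGHS21}. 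The benchmark cost per atom is $\delta^{-1/2}L^{d-1}$, which is the "neutral" exponent in $L^{(d-1)n}$. The key point is that when the bonds at the atom are not degenerate (no two opposite bonds carrying equal momenta, which is exactly what the condition on atoms $v\notin S$ excludes), one gains an extra factor $L^{-2\nu}$. Degenerate atoms are placed in $S$, and the splitting into at most $C^n$ pieces $\mathfrak{D}(\mathbf{M},\mathrm{Ext})$ records which differences $k_{l_1}-k_{l_2}$ are small or zero.

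Next, I would set up the bookkeeping. For a molecule with $V$ atoms, $E$ bonds and $F$ components, the counting of $\mathfrak{D}$ is controlled by the characteristic $\chi=E-V+F$. Each elementary operation removes some bonds and atoms and changes $\chi$ by a definite amount $\Delta\chi$; it multiplies the count by a factor $L^{(d-1)\Delta V}\delta^{-\ast}$ times either $1$ (a normal step) or $L^{-2\nu}$ (a good step). Since a base molecule has $E=2n-1$, and hence $\chi=n$ at the start, reducing to the empty molecule produces $L^{(d-1)n}$ times $L^{-2\nu r}$, where $r$ is the number of good steps. The factor $\delta^{-(n+m)/2}$ arises because atoms inside type I chains are counted by the worse two-vector estimate, which costs an extra $\delta^{-1/2}$.

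The algorithm itself is ordered by degree. It first removes degree-$2$ and degree-$3$ atoms with their incident bonds, treating triple and double bonds separately. It then cuts at degree-$4$ atoms once no lower-degree atom remains, always choosing the cut of minimal cost; in the language of Proposition~\ref{acyclic_steps}, this is the removal of edges of minimal weight. The main obstacle is to show that good steps are frequent enough, namely $r\gtrsim$ (number of atoms outside long chains)$/C$. Normal steps occur essentially only along long stretches of double bonds, which are exactly type I chains, or along ladders with paired opposite bonds, which are type II chains. For type II chains, the imposed condition $k_{l_1}=k_{l_2}$ in $\mathrm{Ext}$ is what makes the step non-gaining but harmless. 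To prove this, I would argue by contradiction on the local structure around a sequence of consecutive normal steps. The absence of triple bonds, together with the degree constraint (at most two incoming and two outgoing bonds), forces the local picture to be one of the two chain types. A pigeonhole argument then gives at most $Cr$ chains, each of length at most $Cr$. The number of atoms $n\le(\log L)^3$ guarantees that the constants $C^n$ are absorbed, since $C^n\le L^{o(1)}$ relative to any power of $L$.
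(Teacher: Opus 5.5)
Your overall architecture matches the one the paper recalls from \cite[Section 9]{DH23}: a reduction algorithm, a per-step estimate $\sup\#\mathfrak{D}(\mathbf{M}_{\text{pre}},\mathrm{Ext}_{\text{pre}})\leqslant C^+\delta^{\Delta\kappa}L^{-(d-1)\Delta\gamma}\sup\#\mathfrak{D}(\mathbf{M}_{\text{pos}},\mathrm{Ext}_{\text{pos}})$, the characteristic going from $\chi=n$ to $0$, and the normal/good dichotomy. However, the mechanism you give for the gain is wrong, and with it the core of the proof is missing.

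\textbf{Where the gain comes from.} Non-degeneracy of an atom never produces $L^{-2\nu}$ by itself. The steps (3R-1), (2R-1) and (2R-3) remove non-degenerate atoms, yet they are normal steps ($\Delta\gamma=\Delta\chi$). Gains come only from joint counting in specific local configurations:
\begin{itemize}
\item two adjacent degree-$3$ atoms, in (3S3) and (3D3);
\item a degree-$3$ atom doubly bonded to a degree-$4$ atom, in (3D4G), and the degree-$3$/degree-$2$ pair of (3S2G);
\item a same-direction double bond, in (2R-2G);
\item the special bond, in (3R-2G).
\end{itemize}
In the delicate cases the gain holds only on one branch of a checkpoint. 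The normal steps (3S3-1) and (3D3-1) are reached only at checkpoints whose other branch performs (3S3-2G), resp. (3D3-2G). The normal branch then carries an extra condition in $\mathrm{Ext}$; this is how $k_{l_1}=k_{l_2}$ arises along type II chains. These checkpoint paths produce the $C^n$ pieces; the degenerate atoms do not, since (DA) removes them once and for all in Phase 1. So whether a step gains cannot be read off the molecule alone, and your ``contradiction on the local structure around consecutive normal steps'' has nothing to run on.

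Normal steps are also not confined to chains: (3R-1), (2R-3) and (2R-4) occur elsewhere. In \cite{DH23}, charging these to good steps relies on the precise priority list of the loop with the conditions under which each case is entered: bridges first, then (3S3), (3D3), (3D4G), (3S2G), (3R), (2R-2G)--(2R-5), and (2R-1) only along type I chains. Your description of the algorithm does not match this. There is no final phase of cuts at degree-$4$ atoms: no component of a molecule has all its atoms of degree $4$, and in Phase 2 every case is triggered by a bridge or by an atom of degree at most $3$. The order is a fixed case list, not a choice of minimal cost.

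\textbf{The chain-length claim.} Your concluding pigeonhole claim, that each chain has at most $Cr$ atoms, cannot come out of the reduction, which gains nothing along a chain. Take atoms $u_1,\dots,u_n$, $n\geqslant 3$, with $u_i,u_{i+1}$ joined by a double bond of opposite directions and $u_1,u_n$ joined by a single bond. This is a base molecule without triple bond. After the first step, which removes $u_1,u_n$, every remaining step is (2R-1) or (2R-5), all normal. The loss is genuine: the difference $h$ of the two momenta on a double bond is the same along the chain, and if $|h|\sim L^{-1}$, each chain atom contributes about $\delta^{-1}L^{d-1}$ decorations. What the reduction controls is the number of atoms \emph{outside} the chains, which is at most $Cr$; you correctly named this as the target a few lines earlier, and it is the form of the conclusion in \cite{DH23}. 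Long type I chains are paid for by $\delta^{-m/2}$, not by $L^{-2\nu r}$.

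\textbf{Smaller errors.}
\begin{itemize}
\item The per-step factor is governed by $\Delta\gamma$ ($=\Delta\chi$ for a normal step), not by the number of atoms removed. For instance, (3R-1) removes one atom but two cycles, at cost $\delta^{-1}L^{2(d-1)}$.
\item The extra $\delta^{-1/2}$ in type I chains does not come from a two-vector estimate. It comes from the one-vector slab count of (2R-1), which costs $\delta^{-1}L^{d-1}$ for a single cycle.
\item The claim $C^n\leqslant L^{o(1)}$ is false for $n$ up to $(\log L)^3$, since $C^{(\log L)^3}=L^{(\log C)(\log L)^2}$. It is also not needed: the statement keeps $C^n$ and $(C^+)^n$ explicit.
\end{itemize}
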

The proof performed in \cite[Section 9.5]{DH23} relies on an algorithm that modifies the molecule until all atoms are removed. A \emph{step} is a modification of the molecule and the set of conditions $\mathrm{Ext}$. A \emph{path} is a sequence of steps that ends at the previous state. A \emph{checkpoint} is a moment after a step that can continue along two different paths. Each path contains at most $Cn$ steps, and there are at most $C^n$ paths, for some constant $C\in\mathbb{R}_+^*$. We also introduce two quantities $(\gamma,\kappa)$ whose variations are defined at each step. Given a time marker, we consider the paths that coincide up to that marker and yield different sets $\mathrm{Ext}$. We denote by $\Upsilon$ the collection of $\mathrm{Ext}$ thus obtained. At each instant:
\begin{itemize}
	\item $\mathbf{M}$ is a molecule.
	\item The vectors $k[\mathbf{M}]$ satisfy one of the conditions $\mathrm{Ext}\in\Upsilon$.
	\item If $\mathbf{M}$ is the union of sub-molecules $\mathbf{M}_j$, then $\mathrm{Ext}$ is the union of the $\mathrm{Ext}_j$
	\item One has
	$$
	\sup\#\mathfrak{D}(\mathbf{M},\mathrm{Ext})\leqslant (C^+)^{n_0}\delta^{-\kappa}L^{(d-1)\gamma}
	$$
	with $n_0$ the number of steps remaining along the path
\end{itemize}
The last point is the most difficult to verify, and it is proved in \cite[Section 9.3]{DH23}:
$$
\sup\#\mathfrak{D}(\mathbf{M}_\text{pre},\mathrm{Ext}_{\text{pre}})\leqslant C^+\delta^{\Delta\kappa}L^{-(d-1)\Delta\gamma}\sup\#\mathfrak{D}(\mathbf{M}_\text{pos},\mathrm{Ext}_{\text{pos}})
$$
where $ \mathbf{M}_\text{pre},\mathrm{Ext}_{\text{pre}} $ (resp. $ \mathbf{M}_\text{pos},\mathrm{Ext}_{\text{pos}} $) correspond to the values of $ \mathbf{M},\mathrm{Ext} $ before (resp. after) a given step of the algorithm.

It appears that this algorithm is similar to a Kruskal algorithm for constructing a spanning tree of the molecule. The quantities $ \Delta \kappa $ and $\Delta \gamma$ correspond to the variation of $\kappa$ and $\gamma$ during one step.

The remainder of this section is devoted to the proof of Theorem \ref{main_theorem} by reviewing each step of the algorithm in order to build the requested spanning tree. We start by showing at each step of the algorithm that we are growing a spanning tree.
\subsection{Steps and proof of Proposition \ref{acyclic_steps}}
\label{Sec::3:1}
Regarding the steps of the algorithm, we always have:
$$
\Delta\gamma=\Delta\chi \text{ or } \Delta\gamma\geqslant \Delta\chi+\dfrac{1}{6(d-1)}
$$
In the first case, the step is said to be \emph{normal}. In the second case, the step is said to be \emph{good}.
One notices that for a \emph{normal} step, one reduces the number of cycles in the molecule, as $ \Delta \gamma $ is equal to the variation of cycles, which is negative as we are removing nodes and edges at each step.
The \emph{good} steps allow one to get a better bound. 
In the next proof, we review all the steps of this algorithm and we show that one can select edges such that they will be part of a spanning tree. We omit the description of $\mathrm{Ext}$ and its variation, as it is not needed for constructing the spanning tree. As for $\gamma$ and $\kappa$, as they appear in the analytical bounds, we have included them in Table \ref{long_table}. Indeed, $\gamma$ is important for determining if a step is good or normal. They can help to understand the order, but there is no general rule, as a normal step can be performed in very specific situations before a good step.

\begin{proof}[of Proposition \ref{acyclic_steps}]

We describe the different possible steps during the algorithm and we color in red the edges which are added to the acyclic graph $G$ that will be a spanning tree when one terminates the algorithm described in the next section. The connected components are dark-dashed boxes. In all the steps, the vertices of $G$ contain the atoms of $\mathbf{M}$, and $E_{\text{pos}}=E_{\text{pre}}\cup \Delta E$ where $E_{\text{pos}},E_{\text{pre}}$ are the sets of edges of $G$ after and before the step, and $\Delta E$ is the set of edges added to $G$ during the step.
These edges are a subset of the edges removed at each step. 

\textbf{Step DA}: We remove a non-isolated degenerate atom $v$ and all its bonds. We add the bonds to $G$ that do not create a cycle.

 In the following steps, we assume there is no degenerate atom.

\vspace{0.5em}
\textbf{Step TB}: Two atoms $v_1,v_2$ are connected by a triple bond. There are two possible sub-steps during which we remove from the molecule the two atoms and all the bonds connected to them:
\begin{itemize}
\item TB1 if $d(v_1)=d(v_2)=3$
\begin{center}
\begin{tikzpicture}[scale=1]

\tikzset{
  nodec/.style={circle, draw=black, minimum size=5mm, inner sep=0pt},
  edge/.style={very thick},
  rededge/.style={very thick, red}
}

\node[nodec] (A) at (0,0) {\scriptsize $v_1$};
\node[nodec] (B) at (3,0) {\scriptsize $v_2$};

\draw[rededge] ([yshift=3pt]A.east) -- ([yshift=3pt]B.west);
\draw[rededge] (A.east) -- (B.west);
\draw[rededge] ([yshift=-3pt]A.east) -- ([yshift=-3pt]B.west);

\end{tikzpicture}
\end{center}
Above, one faces the scenario that adding the edge $(v_1,v_2)$ does not create a cycle in the graph $G$. Otherwise, this edge is not added.
    \item TB2 if $d(v_1)=3,d(v_2)=4$
\begin{center}
\begin{tikzpicture}[scale=1]
\tikzset{
  nodec/.style={circle, draw=black, minimum size=5mm, inner sep=0pt},
  edge/.style={very thick},
  rededge/.style={very thick, red}
}

\node[nodec] (A) at (0,0) {\scriptsize $v_1$};
\node[nodec] (B) at (3,0) {\scriptsize $v_2$};

\draw[rededge] ([yshift=3pt]A.east) -- ([yshift=3pt]B.west);
\draw[rededge] (A.east) -- (B.west);
\draw[rededge] ([yshift=-3pt]A.east) -- ([yshift=-3pt]B.west);

\draw[rededge] (B.east) -- ++(1.5,0);
\end{tikzpicture}

\end{center}
Above, the red edges are added to $G$. Indeed, $v_2$ is of maximal arity that is $4$ and therefore it is an isolated node in $G$.
\end{itemize}

\vspace{0.5em}

\textbf{Step BR}: Two atoms $v_1,v_2$ are connected by one bond (called a "bridge") such that, if we remove this bond from the molecule, the number of connected components of the molecule increases by $1$. We assume that the molecule no longer has any triple bond. We remove the bond $(v_1,v_2)$ from the molecule, and we add it to the acyclic graph $G$.
\begin{center}
\begin{tikzpicture}[scale=1]

\tikzset{
  nodec/.style={circle, draw=black, minimum size=5mm, inner sep=0pt},
  rededge/.style={very thick, red},
  box/.style={draw, dashed, black, inner sep=4pt}
}

\node[nodec] (A) at (0,0) {\scriptsize $v_1$};
\node[nodec] (B) at (3,0) {\scriptsize $v_2$};

\node[box, fit=(A)] {};
\node[box, fit=(B)] {};

\draw[rededge] (A) -- (B);

\end{tikzpicture}

\end{center}

\textbf{Step 3S3}: Two degree $3$ atoms $v_1,v_2$ are connected by a single bond. For details, see 9.3.4 in \cite{DH23}.
\begin{itemize}
\item Step (3S3-1): We remove from the molecule the atoms $\{v_1, v_2\}$ and all $5$ bonds connecting them.

    \item Step (3S3-2G): We remove from the molecule $\{v_1, v_2\}$ and all bonds connecting them.

    \item Step (3S3-3G): We remove from the molecule $\{v_1, v_2\}$ and all bonds connecting them, we add a new bond $\ell_6$ between $v_3$ and $v_5$, which goes from $v_3$ to $v_5$ if $\ell_2$ goes from $v_3$ to $v_1$, and vice versa.

    \item Steps (3S3-4G)--(3S3-5G): We remove from the molecule $v_1, v_2$ and all bonds connecting them.
\end{itemize}

We add to the acyclic graph $G$ the maximum number of bonds linked to $v_1$ or $v_2$ such that there is no cycle created when they are added to the acyclic graph $G$. One can observe that at this point the choice of these edges is not unique. We suppose that an arbitrary order is fixed on the edges. Below, we provide different scenarios:
\begin{equation*}
\begin{tikzpicture}[scale=0.6, every node/.style={scale=0.6}]
    \node[draw=black, circle, thick, minimum size=5mm] (v1) at (0,0) {$v_1$};
    \node[draw=black, circle, thick, minimum size=5mm] (v2) at (3,0) {$v_2$};
    \node[draw=black, circle, thick, minimum size=5mm] (v3) at (-1.5,1.5) {$v_3$};
    \node[draw, circle, thick, minimum size=5mm] (v4) at (-1.5,-1.5) {$v_4$};
    \node[draw, circle, thick, minimum size=5mm] (v5) at (4.5,1.5) {$v_5$};
    \node[draw, circle, thick, minimum size=5mm] (v6) at (4.5,-1.5) {$v_6$};

    \draw[red, thick] (v1) -- node[above]{$\ell_1$} (v2);
    \draw[red, thick] (v1) -- node[above]{$\ell_2$} (v3);
    \draw (v1) -- node[below]{$\ell_3$} (v4);
    \draw (v2) -- node[above]{$\ell_4$} (v5);
    \draw (v2) -- node[below]{$\ell_5$} (v6);
\end{tikzpicture} \qquad  \qquad
	\begin{tikzpicture}[scale=0.6, every node/.style={scale=0.6}]
		\node[draw=black, circle, thick, minimum size=5mm] (v1) at (0,0) {$v_1$};
		\node[draw=black, circle, thick, minimum size=5mm] (v2) at (3,0) {$v_2$};
		\node[draw=black, circle, thick, minimum size=5mm] (v3) at (-1.5,1.5) {$v_3$};
		\node[draw, circle, thick, minimum size=5mm] (v4) at (-1.5,-1.5) {$v_4$};
		\node[draw=black, circle, thick, minimum size=5mm] (v5) at (4.5,1.5) {$v_5$};
		\node[draw, circle, thick, minimum size=5mm] (v6) at (4.5,-1.5) {$v_6$};
		
		\draw[thick] (v1) -- node[above]{$\ell_1$} (v2); 
		\draw[red, thick] (v1) -- node[above]{$\ell_2$} (v3);
		\draw (v1) -- node[below]{$\ell_3$} (v4);
		\draw[red, thick] (v2) -- node[above]{$\ell_4$} (v5);
		\draw (v2) -- node[below]{$\ell_5$} (v6);
	\end{tikzpicture}
\end{equation*}
On the left, we have added $\ell_1$ and $\ell_2$ to the acyclic graph $G$, and the other edges cannot be added due to a cycle. On the right, we have added $\ell_2$ and $\ell_4$, and we are in the case where the other edges produce a cycle.
In step (3S3-3G), one adds a new edge $\ell_6$ between nodes that were originally connected to $v_1$ and $v_2$. This new edge will not be added in the following steps, as it will create a cycle.
\vspace{0.5cm}

\textbf{Step 3D3}: Two degree $3$ atoms $v_1,v_2$ are connected by a double bond. For details, see 9.3.5 in \cite{DH23}.
\begin{itemize}
  \item Step (3D3-1): We remove from the molecule the atoms $\{v_1, v_2\}$ and all four bonds connected to them.

  \item Step (3D3-2G): We remove from the molecule $\{v_1, v_2\}$ and all bonds connected to them.

  \item Step (3D3-3G): We remove from the molecule $\{v_1, v_2\}$ and all bonds connected to them, 
  but we add to the molecule a new bond $\ell_5$ between $v_3$ and $v_4$ (not shown in Figure 29), 
  which goes from $v_4$ to $v_3$ if $\ell_3$ goes from $v_1$ to $v_3$, and vice versa.

  \item Steps (3D3-4G)--(3D3-5G): We remove from the molecule $v_1$ and $v_2$, as well as all bonds connected to them.

  \item Step (3D3-6G): We remove from the molecule the atoms $\{v_1, v_2,v_3, v_4\}$ and all nine bonds connected to them.
\end{itemize}
For all the steps except 3D3-6G, we add to the acyclic graph $G$ the maximum number of bonds linked to $v_1$ or $v_2$ such that there is no cycle created when they are added to the acyclic graph $G$. For 3D3-6G, we proceed as follows:
\begin{figure}[H]
\begin{center}
\begin{tikzpicture}[scale=1]
    \node[circle, draw=black, thick, minimum size=5mm] (v1) at (0,2) {\scriptsize $v_1$};
    \node[circle, draw=black, thick, minimum size=5mm] (v2) at (0,0) {\scriptsize $v_2$};
    \node[circle, draw=black, thick, minimum size=5mm] (v3) at (2,2) {\scriptsize $v_3$};
    \node[circle, draw=black, thick, minimum size=5mm] (v4) at (2,0) {\scriptsize $v_4$};
    \node[circle, draw=black, thick, minimum size=5mm] (v5) at (4,2) {\scriptsize $v_5$};
    \node[circle, draw=black, thick, minimum size=5mm] (v6) at (4,0) {\scriptsize $v_6$};

    \draw[thick] ([xshift=-2pt]v1.south) -- node[midway,left] {\scriptsize $\ell_1$} ([xshift=-2pt]v2.north);
    \draw[thick, red] ([xshift=2pt]v1.south) -- node[midway,right] {\scriptsize$\ell_2$} ([xshift=2pt]v2.north);

    \draw[->, red, thick] (v1) -- node[midway,above] {\scriptsize $\ell_3$} (v3);
    \draw[->, red, thick] (v4) -- node[midway,above] {\scriptsize $\ell_4$} (v2);
    \draw[thick] (v3) -- node[midway,right] {\scriptsize $\ell_5$} (v4);

    \draw[thick, red] ([yshift=2pt]v3.east) -- node[midway,above] {\scriptsize $\ell_6$} ([yshift=2pt]v5.west);
    \draw[thick] ([yshift=-2pt]v3.east) -- node[midway,below] {\scriptsize $\ell_7$} ([yshift=-2pt]v5.west);

    \draw[thick, red] ([yshift=2pt]v4.east) -- node[midway,above] {\scriptsize $\ell_8$} ([yshift=2pt]v6.west);
    \draw[thick] ([yshift=-2pt]v4.east) -- node[midway,below] {\scriptsize $\ell_9$} ([yshift=-2pt]v6.west);

    \draw[dashed] (2.2,2.8) rectangle (4.8,1.2);
    \draw[dashed] (2.2,0.8) rectangle (4.8,-0.8);
\end{tikzpicture}
\begin{tikzpicture}[scale=1]
    \node[circle, draw=black, thick, minimum size=6mm] (v1) at (0,2) {\scriptsize $v_1$};
    \node[circle, draw=black, thick, minimum size=6mm] (v2) at (0,0) {\scriptsize $v_2$};
    \node[circle, draw=black, thick, minimum size=6mm] (v3) at (2,2) {\scriptsize $v_3$};
    \node[circle, draw=black, thick, minimum size=6mm] (v4) at (2,0) {\scriptsize $v_4$};
    \node[circle, draw=black, thick, minimum size=6mm] (v5) at (4,2) {\scriptsize $v_5$};
    \node[circle, draw=black, thick, minimum size=6mm] (v6) at (4,0) {\scriptsize $v_6$};

    \draw[thick] ([xshift=-2pt]v1.south) -- node[midway,left] {\scriptsize $\ell_1$} ([xshift=-2pt]v2.north);
    \draw[thick] ([xshift=2pt]v1.south) -- node[midway,right] {\scriptsize $\ell_2$} ([xshift=2pt]v2.north);

    \draw[->, red, thick] (v1) -- node[midway,above] {\scriptsize $\ell_3$} (v3);
    \draw[->, red, thick] (v4) -- node[midway,above] {\scriptsize $\ell_4$} (v2);
    \draw[thick] (v3) -- node[midway,right] {\scriptsize $\ell_5$} (v4);

    \draw[thick, black] ([yshift=2pt]v3.east) -- node[midway,above] {\scriptsize $\ell_6$} ([yshift=2pt]v5.west);
    \draw[thick] ([yshift=-2pt]v3.east) -- node[midway,below] {\scriptsize $\ell_7$} ([yshift=-2pt]v5.west);

    \draw[thick, red] ([yshift=2pt]v4.east) -- node[midway,above] {\scriptsize $\ell_8$} ([yshift=2pt]v6.west);
    \draw[thick] ([yshift=-2pt]v4.east) -- node[midway,below] {\scriptsize $\ell_9$} ([yshift=-2pt]v6.west);

    \draw[dashed] (2.2,2.8) rectangle (4.8,-0.8);
\end{tikzpicture}

\end{center}
\end{figure}
\textbf{Step 3D4G}: Two degree $3,4$ atoms $v_1,v_2$ are connected by a double bond. For details, see 9.3.6 in \cite{DH23}.

We remove from the molecule $v_1,v_2$, and we add to the acyclic graph $G$ the maximum number of bonds linked to $v_1,v_2$ such that there is no cycle created in the acyclic graph $G$.
\begin{figure}[H]
\centering
\begin{tikzpicture}[scale=1, every node/.style={draw, circle, thick, minimum size=5mm,inner sep=0pt}]
\node (v2) at (0,0) {\scriptsize $v_2$};
\node (v3) [shift={(-1,0)}] at (v2) {\scriptsize $v_3$};
\node (v1) [shift={(1,0)}] at (v2) {\scriptsize $v_1$};
\node (v4) [shift={(1.5,0.8)}] at (v2) {\scriptsize $v_4$};
\node (v5) [shift={(1.5,-0.8)}] at (v2) {\scriptsize $v_5$};
\draw[double,red] (v1) -- (v2);
\draw[-,red] (v2) -- (v3);
\draw[-,red] (v1) -- (v4);
\draw[-] (v1) -- (v5);
\end{tikzpicture}
\quad\quad
\begin{tikzpicture}[scale=1, every node/.style={draw, circle, thick, minimum size=5mm,inner sep=0pt}]
\node (v2) at (0,0) {\scriptsize $v_2$};
\node (v3) [shift={(-1,0)}] at (v2) {\scriptsize $v_3$};
\node (v1) [shift={(1,0)}] at (v2) {\scriptsize $v_1$};
\node (v4) [shift={(1.5,0.8)}] at (v2) {\scriptsize $v_4$};
\node (v5) [shift={(1.5,-0.8)}] at (v2) {\scriptsize $v_5$};
\draw[double,red] (v2) -- (v1);
\draw[-] (v2) -- (v3);
\draw[-,red] (v1) -- (v4);
\draw[-,red] (v1) -- (v5);
\end{tikzpicture}
\caption{On the left, we have added $(v_1,v_2)$, $(v_3,v_2)$, and $(v_1,v_4)$ to the acyclic graph $G$, and the other edges cannot be added due to a cycle. On the right, we have added $(v_1,v_2)$, $(v_1,v_4)$, and $(v_1,v_5)$, and we are in the case where the other edges produce a cycle.}
\end{figure}

\textbf{Step 3S2G}: Two degree $3,2$ atoms $v_1,v_2$ are connected by a single bond. For details see 9.3.7 in \cite{DH23}.

We remove from the molecule $v_1,v_2$, and we add to the acyclic graph $G$ the maximum number of bonds linked to $v_1,v_2$ such that there is no cycle created in the acyclic graph $G$.

\begin{figure}[H]
\centering
\begin{tikzpicture}[scale=1, every node/.style={draw, circle, thick, minimum size=5mm,inner sep=0pt}]
\node (v2) at (0,0) {\scriptsize $v_2$};
\node (v3) [shift={(-1,0)}] at (v2) {\scriptsize $v_3$};
\node (v1) [shift={(1,0)}] at (v2) {\scriptsize $v_1$};
\node (v4) [shift={(1.5,0.8)}] at (v2) {\scriptsize $v_4$};
\node (v5) [shift={(1.5,-0.8)}] at (v2) {\scriptsize $v_5$};
\draw[-,red] (v2) -- (v1);
\draw[-,red] (v2) -- (v3);
\draw[-,red] (v1) -- (v4);
\draw[-] (v1) -- (v5);
\end{tikzpicture}
\quad
\begin{tikzpicture}[scale=1, every node/.style={draw, circle, thick, minimum size=5mm,inner sep=0pt}]
\node (v2) at (0,0) {\scriptsize $v_2$};
\node (v3) [shift={(-1,0)}] at (v2) {\scriptsize $v_3$};
\node (v1) [shift={(1,0)}] at (v2) {\scriptsize $v_1$};
\node (v4) [shift={(1.5,0.8)}] at (v2) {\scriptsize $v_4$};
\node (v5) [shift={(1.5,-0.8)}] at (v2) {\scriptsize $v_5$};
\draw[red] (v2) -- (v1);
\draw[-] (v2) -- (v3);
\draw[-,red] (v1) -- (v4);
\draw[-,red] (v1) -- (v5);
\end{tikzpicture}
\caption{On the left, we have added $(v_1,v_2)$, $(v_3,v_2)$, and $(v_1,v_4)$ to the acyclic graph $G$, and the other edges cannot be added due to a cycle. On the right, we have added $(v_1,v_2)$, $(v_1,v_4),(v_1,v_5)$, and we are in the case where the other edges produce a cycle.}
\end{figure}

\textbf{Step 3R}: one atom $v$ of degree $3$ is connected with three atoms of degree $4$. For details see 9.3.8 in \cite{DH23}.
\begin{itemize}
\item Step (3R-1): We remove from the molecule the atom $v$ and its three bonds. We add to the acyclic graph $G$ the maximum number of bonds linked to $v$ and its neighbors such that there is no cycle created in $G$.
\begin{figure}[H]
\centering
\begin{tikzpicture}[scale=1, every node/.style={draw, circle, thick, minimum size=4mm,inner sep=0pt}]
\node (v) at (0,0) {\scriptsize $v$};
\node (v1) [shift={(-1,0)}] at (v) {\scriptsize $v_1$};
\node (v2) [shift={(1,0.5)}] at (v) {\scriptsize $v_2$};
\node (v3) [shift={(1,-0.5)}] at (v) {\scriptsize $v_3$};
\draw[-,red] (v) -- (v1);
\draw[-,red] (v) -- (v2);
\draw[-,red] (v) -- (v3);
\end{tikzpicture}
\caption{We provide an example. We have added $(v,v_1),(v,v_2),(v,v_3)$ to the acyclic graph $G$. The other bonds linked to $v_1,v_2,v_3$ cannot be added due to potential cycles not represented here.}
\end{figure}
\item Step (3R-2G): We remove from the molecule the atom $v$ and its three bonds. In addition, we remove from the molecule $v_1',v_2'$ and their bonds. We add to the acyclic graph $G$ the maximum number of bonds linked to $v,v_1',v_2'$ and its neighbors such that there is no cycle created in $G$.
\begin{center}
\begin{tikzpicture}[scale=0.7, every node/.style={scale=0.7}]
    \node[draw, circle, thick] (v) at (0,3) {\scriptsize $v$};
    \node[draw, circle, thick] (v3) at (-3,0) {\scriptsize $v_3'$};
    \node[draw, circle, thick] (v1) at (-1,0) {\scriptsize $v_1'$};
    \node[draw, circle, thick] (v2) at (1,0) {\scriptsize $v_2'$};
    \node[draw, circle, thick] (v4) at (3,0) {\scriptsize $v_4'$};

    \draw[dashed] (v) -- (-3,1);
    \draw[dashed] (v) -- (3,1);

    \draw[red] ([yshift=2pt]v3.east) -- node[above]{\scriptsize $\ell_2'$} ([yshift=2pt]v1.west);
\draw[red] ([yshift=-2pt]v3.east) -- node[below]{\scriptsize $\ell_3'$} ([yshift=-2pt]v1.west);

\draw[red] (v1) -- node[above]{\scriptsize $\ell_1'$} (v2);

\draw[red] ([yshift=2pt]v2.east) -- node[above]{\scriptsize $\ell_4'$} ([yshift=2pt]v4.west);
\draw[red] ([yshift=-2pt]v2.east) -- node[below]{\scriptsize $\ell_5'$} ([yshift=-2pt]v4.west);
\end{tikzpicture}
\end{center}
\end{itemize}

\medskip

\textbf{Step 2R}: One atom $v$ of degree $2$ is connected to one or two atoms of degree $2$ or $4$.
\begin{itemize}
  \item Step (2R-1): we assume that $v$ is connected to a degree-$4$ atom $v'$ by a double bond, 
  where the two bonds have opposite directions. We remove from the molecule the atom $v$ and the double bond $(v,v')$. We add to the acyclic graph $G$ the maximum number of bonds linked to $v$ and its neighbors such that there is no cycle created in $G$.
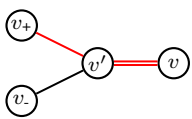
\begin{figure}[H]
\centering
\begin{tikzpicture}[scale=1, every node/.style={draw, circle, thick, minimum size=4mm,inner sep=0pt}]
\node (v) at (0,0) {\scriptsize $v$};
\node (v') [shift={(-1,0)}] at (v) {\scriptsize $v'$};
\node (v+) [shift={(-1,0.5)}] at (v') {\scriptsize $v_{\text{+}}$};
\node (v-) [shift={(-1,-0.5)}] at (v') {\scriptsize $v_{\text{-}}$};
\draw[double,red] (v) -- (v');
\draw[-,red] (v') -- (v+);
\draw[-] (v') -- (v-);
\end{tikzpicture}
\caption{We provide an example. We have added $(v,v')$ and $(v',v_{+})$ to the acyclic graph $G$. The other bond $(v',v_{+})$ cannot be added due to potential cycles not represented here.}
\end{figure}
  \item Step (2R-2G): We assume that $v$ is connected to a degree-$4$ atom by a double bond, 
where the two bonds have the same direction. We remove from the molecule the atom $v$ and the double bond. We add to the acyclic graph $G$ the maximum number of bonds linked to $v$ and its neighbors such that there is no cycle created in $G$.

\item Step (2R-3): We assume that $v$ is connected to a degree-$4$ atom $v'$ by a single bond $(v,v')$, 
and also connected to another degree-$2$ or degree-$4$ atom $v''$ by a single bond $(v,v'')$. 
We remove from the molecule the atom $v$ and the two bonds. We add to the acyclic graph $G$ the maximum number of bonds linked to $v$ and its neighbors such that there is no cycle created in $G$.
\begin{figure}[H]
\centering
\begin{tikzpicture}[scale=1, every node/.style={draw, circle, thick, minimum size=4mm,inner sep=0pt}]
\node (v) at (0,0) {\scriptsize $v$};
\node (v') [shift={(-1,0)}] at (v) {\scriptsize $v'$};
\node (v'') [shift={(1,0)}] at (v) {\scriptsize $v''$};
\node (v1) [shift={(0,1)}] at (v') {\scriptsize $v_1$};
\node (v2) [shift={(-1,0)}] at (v') {\scriptsize $v_2$};
\node (v3) [shift={(0,-1)}] at (v') {\scriptsize $v_3$};
\node (v4) [shift={(1,0.5)}] at (v'') {\scriptsize $v_4$};
\node (v5) [shift={(1,-0.5)}] at (v'') {\scriptsize $v_5$};
\draw[-,red] (v) -- (v');
\draw[-,red] (v) -- (v'');
\draw[-] (v') -- (v1);
\draw[-] (v') -- (v2);
\draw[-] (v') -- (v3);
\draw[-] (v'') -- (v4);
\draw[-] (v'') -- (v5);
\end{tikzpicture}
\caption{In the example above, we have added $(v,v')$ and $(v,v'')$ to the acyclic graph $G$. The other edges $(v',v_1),(v',v_2),(v',v_3),(v'',v_4),(v'',v_5)$ cannot be added due to potential cycles not represented here.}
\end{figure}
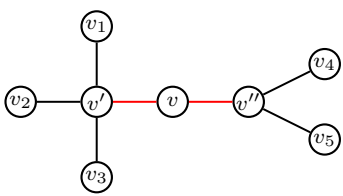
  \item Step (2R-4): We assume that $v$ is connected to two degree-$2$ atoms, $v'$ and $v''$, 
  by two single bonds, such that neither $v'$ nor $v''$ is connected to a degree-$3$ atom. 
  We remove from the molecule the atoms $v, v', v''$, as well as all bonds connected to them. We add to the acyclic graph $G$ the maximum number of bonds linked to $v,v',v''$ and their neighbors such that there is no cycle created in $G$.
\begin{figure}[H]
\centering
\begin{tikzpicture}[scale=1, every node/.style={draw, circle, thick, minimum size=4mm,inner sep=0pt}]
\node (v) at (0,0) {\scriptsize $v$};
\node (v') [shift={(-1,0)}] at (v) {\scriptsize $v'$};
\node (v'') [shift={(1,0)}] at (v) {\scriptsize $v''$};
\node (v1) [shift={(-1,0.5)}] at (v') {\scriptsize $v_1$};
\node (v2) [shift={(-1,-0.5)}] at (v') {\scriptsize $v_2$};
\node (v3) [shift={(1,0.5)}] at (v'') {\scriptsize $v_3$};
\node (v4) [shift={(1,-0.5)}] at (v'') {\scriptsize $v_4$};

\draw[-,red] (v) -- (v');
\draw[-,red] (v) -- (v'');
\draw[-] (v') -- (v1);
\draw[-] (v') -- (v2);
\draw[-] (v'') -- (v3);
\draw[-] (v'') -- (v4);
\end{tikzpicture}
\caption{We provide an example. We have added $(v,v')$ and $(v,v'')$ to the acyclic graph $G$. The other edges $(v',v_1),(v',v_2),(v'',v_4),(v'',v_5)$ cannot be added due to potential cycles not represented here.}
\end{figure}
  \item Step (2R-5): We assume that $v$ is connected to a degree-$2$ atom, $v'$, by a double bond. 
  We remove from the molecule the atoms $v, v'$ and the double bond. We add to the acyclic graph $G$ the maximum number of bonds linked to $v,v'$ and their neighbors such that there is no cycle created in $G$.
  \begin{figure}[H]
\centering
\begin{tikzpicture}[scale=1, every node/.style={draw, circle, thick, minimum size=4mm,inner sep=0pt}]
\node (v) at (0,0) {\scriptsize $v$};
\node (v') [shift={(-1,0)}] at (v) {\scriptsize $v'$};
\draw[double,red] (v) -- (v');
\end{tikzpicture}
\caption{We provide an example. We have added $(v,v')$ to the acyclic graph $G$. The other edges linked to $v,v'$ cannot be added due to potential cycles not represented here.}
\end{figure}
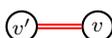
\end{itemize}
\end{proof}
\newpage
Here is a table which synthesises the evolution of quantities step by step. It gives the dynamic weights used in this Kruskal algorithm with the highest weight at the top:

\begin{center} \label{long_table}
\renewcommand{\arraystretch}{1.15}
\small
\begin{tabular}{|p{2.6cm}|c|p{2.7cm}|c|}
\hline
\textbf{Step} & $\Delta\chi$ & $\Delta\gamma$ & $\Delta\kappa$ \\
\hline
(DA)
& $\le -d(v)+j+h$
& $\begin{cases}
0,\\[0.2em]
-2+\dfrac{1}{4}
\end{cases}$
& $0$ \\
\hline
(3S3-1)
& $-2$
& $-2$
& $-1$ \\
\hline
(3S3-2G)
& $-2$
& $-2+\dfrac{1}{6(d-1)}$
& $-2$ \\
\hline
(3S3-3G)
& $-1$
& $-1+\dfrac{1}{6(d-1)}$
& $-1$ \\
\hline
(3S3-4G), (3S3-5G)
& $-3$ or $-2$
& ($-3$ or $-2$)$+\dfrac{1}{6(d-1)}$
& $-2$ \\
\hline
(3D3-1)
& $-2$
& $-2$
& $-1$ \\
\hline
(3D3-2G), (3D3-4G), (3D3-5G)
& $-2$
& $-2+\dfrac{1}{4(d-1)}$
& $-2$ \\
\hline
(3D3-3G)
& $-1$
& $-1+\dfrac{1}{4(d-1)}$
& $-1$ \\
\hline
(3D3-6G)
& $-5$ or $-4$
& ($-5$ or $-4$)$+\dfrac{1}{6(d-1)}$
& $-4$ \\
\hline
(3D4G)
& $-3$
& $-3+\dfrac{1}{4(d-1)}$
& $-2$ \\
\hline
(3S2G)
& $-2$
& $-2+\dfrac{1}{4(d-1)}$
& $-2$ \\
\hline
(3R-1)
& $-2$
& $-2$
& $-1$ \\
\hline
(3R-2G)
& $-5$ or $-4$
& ($-5$ or $-4$)$+\dfrac{1}{4(d-1)}$
& $-4$ \\
\hline
(2R-1), (2R-3)
& $-1$
& $-1$
& $-1$ \\
\hline
(2R-2G)
& $-1$
& $-1+\dfrac{1}{3(d-1)}$
& $-1$ \\
\hline
(2R-4)
& $-1$ or $-1$
& $-1$
& $-1$ \\
\hline
(2R-5)
& $-1$
& $-1$
& $-1$ \\
\hline
\end{tabular}
\end{center}
\subsection{Algorithm and proof of Theorem \ref{main_theorem}}
\label{Sec::3:2}

We start by describing the algorithm given in \cite{DH23} and we use all the steps introduced in the previous section.
The algorithm is described in two phases. First, we remove degenerate atoms using steps (DA). In the second phase, there is therefore no more degenerate atom, and it should be noted that none of the steps can create a potentially degenerate atom.

In this second phase, the algorithm is described as a large loop. Once inside it, certain rules must be followed in order, depending on the molecule $\mathbf{M}$, to choose the next step (i) or define a control point to choose between two possibilities (ii) for the next step. It is also possible in some cases to choose more than one step or control point successively, respecting specific rules, until the end of the loop execution and returning to its beginning. The loop ends when $\mathbf{M}$ contains only isolated atoms.

\subsubsection*{Phase 1: elimination of degenerate atoms}

Using (DA) repeatedly, eliminate all degenerate atoms.

We note that these steps do not create new degenerate atoms of degree $4$. At the end of phase 1, there will be no more degenerate atoms. This property will be preserved for the rest of the algorithm.

\subsubsection*{Phase 2: description of the loop}

Note that there is no triple bond at the beginning.

\begin{enumerate}
  \item If $\mathbf{M}$ contains a bridge, it must be removed using (BR). Repeat until $\mathbf{M}$ contains no more bridges.

  \item If $\mathbf{M}$ contains two degree-$3$ atoms, $v_1$ and $v_2$, connected by a single bond $l_1$, then:
  \begin{enumerate}
    \item If $\mathbf{M}$ contains a functional group, execute (3S3-5G). Return to (1).
    \item If $\mathbf{M}$ contains the functional group in the figure and satisfies (i) and (ii), with $d(v_3) = \dots = d(v_6) = 4$, a control point is needed: choose between (3S3-1) and (3S3-2G). Return to (1).
    \item If (i) and (ii) are satisfied, but $d(v_3)$ and $d(v_5)$ are not equal to $4$, a control point is needed: choose between (3S3-2G) and (3S3-3G). If a triple bond forms between $v_3$ and $v_5$ after (3S3-3G), remove it using (TB-1)–(TB-2). Return to (1).
    \item If one of conditions (i) or (ii) is not satisfied, execute (3S3-4G). Return to (1).
  \end{enumerate}

  \item If $\mathbf{M}$ contains two degree-$3$ atoms, $v_1$ and $v_2$, connected by a double bond $(l_1, l_2)$, then:
  \begin{enumerate}
    \item If $\mathbf{M}$ contains the functional group corresponding to (3D3-4G) or (3D3-5G), perform the corresponding step. Return to (1).
    \item If $\mathbf{M}$ contains the functional group corresponding to (3D3-1)–(3D3-3G), we are at the beginning of a type II chain. If this chain continues, with $v_3$ and $v_4$ connected by a double bond and also connected to $v_5, v_6$ by opposite single bonds, a control point is needed: choose between (3D3-1) and (3D3-2G). Return to (1).
    \item If the type II chain does not continue:
    \begin{enumerate}[label=(\roman*)]
      \item If all atoms other than $(v_1,v_2)$ do not have degree $4$, a control point is needed: choose between (3D3-2G) and (3D3-3G). If a triple bond forms between $v_3$ and $v_4$ after (3D3-3G), remove it using (TB-1)–(TB-2). Return to (1).
      \item If $v_3$ and $v_4$ are as in Figure 30, execute (3D3-6G). Return to (1).
      \item Otherwise, a control point is needed: choose between (3D3-1) and (3D3-2G). Return to (1).
    \end{enumerate}
  \end{enumerate}

  \item If $\mathbf{M}$ contains a degree-$3$ atom $v_1$ connected to a degree-$4$ atom $v_2$ by a double bond $(\ell_1,\ell_2)$, perform (3D4G). Return to (1).

  \item If $\mathbf{M}$ contains a degree-$3$ atom $v_1$ connected to a degree-$2$ atom $v_2$, perform (3S2G). Return to (1).

  \item If $\mathbf{M}$ contains a degree-$3$ atom $v$ connected to three degree-$4$ atoms $v_j$ ($j=1,2,3$) by three single bonds $\ell_j$, then:
  \begin{enumerate}
    \item If the component without $v$ and $\ell_j$ contains a special bond, execute (3R-2G). Return to (1).
    \item Otherwise, execute (3R-1). Return to (1).
  \end{enumerate}

  \item $\mathbf{M}$ must contain only atoms of degree $0$, $2$, and $4$. If we are in one of the cases (2R-2G) to (2R-5), perform the corresponding step. Return to (1).

  \item If $\mathbf{M}$ contains a degree-$2$ atom $v$ connected to a degree-$4$ atom $v_1$ by a double bond with opposite directions, we are at the beginning of a type I chain. Without requiring its continuation (unlike (3-b)), execute (2R-1) until the end of the chain. Return to (7), remembering to analyse this component.
\end{enumerate}

\begin{proof}[of Theorem  \ref{main_theorem}] At the beginning of the algorithm the acyclic graph $G$ associated with the molecule $\mathbf{M}$ has no edges but contains all the atoms of the molecule $\mathbf{M}$. Then, we let the algorithm run, which ends when all the atoms are removed. Proposition \ref{acyclic_steps} guarantees that at each step the graph $G$ is acyclic. In the end, we have included in $G$ all edges that do not create a cycle. Therefore, one obtains a spanning tree: if $G$ is not connected, then we must have missed some bond during the steps of the algorithm which does not create a cycle in $G$, which is impossible. Note that this is a Kruskal-style algorithm as we select the edges and the nodes according to the order given above. An important point is that we may have multiple components during the building of the spanning tree, contrary to Prim's algorithm, for instance. We provide an example in the next section that shows multiple components in the construction of the spanning tree.
\end{proof}
\newpage
\subsection{Example of spanning tree}

Here is an example of molecule reduction, along with the construction of the associated spanning tree. The construction of the spanning tree heavily depends on the choice of the order of the bridge removals, especially regarding the selection of additional edges.
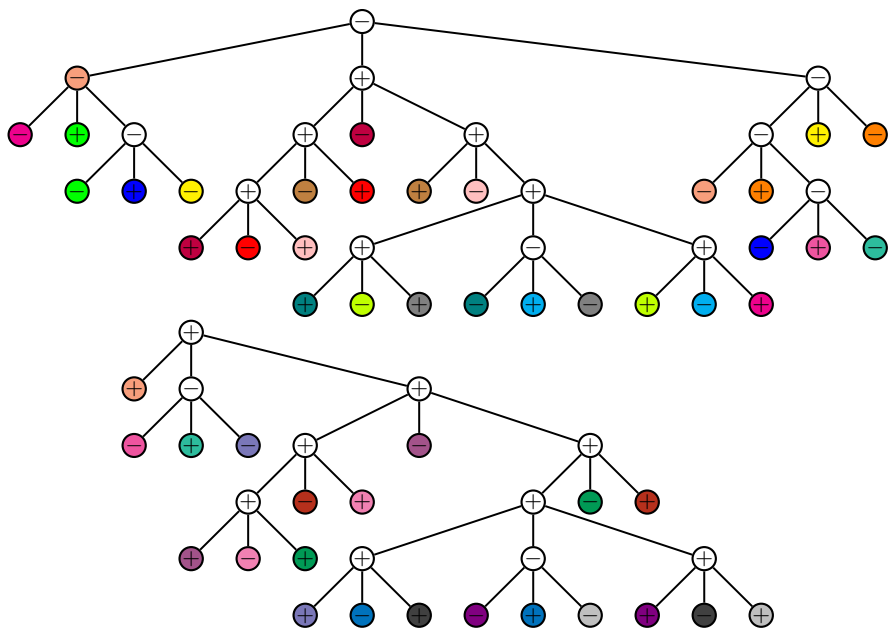
\begin{figure}[H]
\centering
\begin{tikzpicture}[scale=0.6, every node/.style={draw, circle, thick, minimum size=2mm,inner sep=0pt}]
    \node (-1) at (-4,0) {\scriptsize $-$};
\node (-11) [shift={(-3.75,-0.75)},fill=Melon] at (-1) {\scriptsize $-$};
\node (-12) [shift={(0,-0.75)}] at (-1) {\scriptsize $+$};
\node (-13) [shift={(6, -0.75)}] at (-1) {\scriptsize $-$};
\draw[-] (-1) -- (-11);
\draw[-] (-1) -- (-12);
\draw[-] (-1) -- (-13);

\node (-111) [shift={(-0.75,-0.75)},fill=magenta] at (-11) {\scriptsize $-$};
\node (-112) [shift={(0,-0.75)},fill=green] at (-11) {\scriptsize $+$};
\node (-113) [shift={(0.75,-0.75)}] at (-11) {\scriptsize $-$};
\draw[-] (-11) -- (-111);
\draw[-] (-11) -- (-112);
\draw[-] (-11) -- (-113);

\node (-121) [shift={(-0.75,-0.75)}] at (-12) {\scriptsize $+$};
\node (-122) [shift={(0,-0.75)},fill=purple] at (-12) {\scriptsize $-$};
\node (-123) [shift={(1.5,-0.75)}] at (-12) {\scriptsize $+$};
\draw[-] (-12) -- (-121);
\draw[-] (-12) -- (-122);
\draw[-] (-12) -- (-123);

\node (-131) [shift={(-0.75,-0.75)}] at (-13) {\scriptsize $-$};
\node (-132) [shift={(0,-0.75)},fill=yellow] at (-13) {\scriptsize $+$};
\node (-133) [shift={(0.75,-0.75)},fill=orange] at (-13) {\scriptsize $-$};
\draw[-] (-13) -- (-131);
\draw[-] (-13) -- (-132);
\draw[-] (-13) -- (-133);

\node (-1131) [shift={(-0.75,-0.75)},fill=green] at (-113) {\scriptsize $-$};
\node (-1132) [shift={(0,-0.75)},fill=blue] at (-113) {\scriptsize $+$};
\node (-1133) [shift={(0.75,-0.75)},fill=yellow] at (-113) {\scriptsize $-$};
\draw[-] (-113) -- (-1131);
\draw[-] (-113) -- (-1132);
\draw[-] (-113) -- (-1133);

\node (-1211) [shift={(-0.75,-0.75)}] at (-121) {\scriptsize $+$};
\node (-1212) [shift={(0,-0.75)},fill=brown] at (-121) {\scriptsize $-$};
\node (-1213) [shift={(0.75,-0.75)},fill=red] at (-121) {\scriptsize $+$};
\draw[-] (-121) -- (-1211);
\draw[-] (-121) -- (-1212);
\draw[-] (-121) -- (-1213);

\node (-1231) [shift={(-0.75,-0.75)},fill=brown] at (-123) {\scriptsize $+$};
\node (-1232) [shift={(0,-0.75)},fill=pink] at (-123) {\scriptsize $-$};
\node (-1233) [shift={(0.75,-0.75)}] at (-123) {\scriptsize $+$};
\draw[-] (-123) -- (-1231);
\draw[-] (-123) -- (-1232);
\draw[-] (-123) -- (-1233);

\node (-1311) [shift={(-0.75,-0.75)},fill=Melon] at (-131) {\scriptsize $-$};
\node (-1312) [shift={(0,-0.75)},fill=orange] at (-131) {\scriptsize $+$};
\node (-1313) [shift={(0.75,-0.75)}] at (-131) {\scriptsize $-$};
\draw[-] (-131) -- (-1311);
\draw[-] (-131) -- (-1312);
\draw[-] (-131) -- (-1313);

\node (-13131) [shift={(-0.75,-0.75)},fill=blue] at (-1313) {\scriptsize $-$};
\node (-13132) [shift={(0,-0.75)},fill=Rhodamine] at (-1313) {\scriptsize $+$};
\node (-13133) [shift={(0.75,-0.75)},fill=SeaGreen] at (-1313) {\scriptsize $-$};
\draw[-] (-1313) -- (-13131);
\draw[-] (-1313) -- (-13132);
\draw[-] (-1313) -- (-13133);

\node (-12111) [shift={(-0.75,-0.75)},fill=purple] at (-1211) {\scriptsize $+$};
\node (-12112) [shift={(0,-0.75)},fill=red] at (-1211) {\scriptsize $-$};
\node (-12113) [shift={(0.75,-0.75)},fill=pink] at (-1211) {\scriptsize $+$};
\draw[-] (-1211) -- (-12111);
\draw[-] (-1211) -- (-12112);
\draw[-] (-1211) -- (-12113);

\node (-12331) [shift={(-2.25,-0.75)}] at (-1233) {\scriptsize $+$};
\node (-12332) [shift={(0,-0.75)}] at (-1233) {\scriptsize $-$};
\node (-12333) [shift={(2.25,-0.75)}] at (-1233) {\scriptsize $+$};
\draw[-] (-1233) -- (-12331);
\draw[-] (-1233) -- (-12332);
\draw[-] (-1233) -- (-12333);

\node (-123311) [shift={(-0.75,-0.75)},fill=teal] at (-12331) {\scriptsize $+$};
\node (-123312) [shift={(0,-0.75)},fill=lime] at (-12331) {\scriptsize $-$};
\node (-123313) [shift={(0.75,-0.75)},fill=gray] at (-12331) {\scriptsize $+$};
\draw[-] (-12331) -- (-123311);
\draw[-] (-12331) -- (-123312);
\draw[-] (-12331) -- (-123313);

\node (-123321) [shift={(-0.75,-0.75)},fill=teal] at (-12332) {\scriptsize $-$};
\node (-123322) [shift={(0,-0.75)},fill=cyan] at (-12332) {\scriptsize $+$};
\node (-123323) [shift={(0.75,-0.75)},fill=gray] at (-12332) {\scriptsize $-$};
\draw[-] (-12332) -- (-123321);
\draw[-] (-12332) -- (-123322);
\draw[-] (-12332) -- (-123323);

\node (-123331) [shift={(-0.75,-0.75)},fill=lime] at (-12333) {\scriptsize $+$};
\node (-123332) [shift={(0,-0.75)},fill=cyan] at (-12333) {\scriptsize $-$};
\node (-123333) [shift={(0.75,-0.75)},fill=magenta] at (-12333) {\scriptsize $+$};
\draw[-] (-12333) -- (-123331);
\draw[-] (-12333) -- (-123332);
\draw[-] (-12333) -- (-123333);
\end{tikzpicture}

\begin{tikzpicture}[scale=0.6, every node/.style={draw, circle, thick, minimum size=2mm,inner sep=0pt}]
    \node (1) at (-4,-6) {\scriptsize $+$};
\node (11) [shift={(-0.75,-0.75)},fill=Melon] at (1) {\scriptsize $+$};
\node (12) [shift={(0,-0.75)}] at (1) {\scriptsize $-$};
\node (13) [shift={(3,-0.75)}] at (1) {\scriptsize $+$};
\draw[-] (1) -- (11);
\draw[-] (1) -- (12);
\draw[-] (1) -- (13);

\node (121) [shift={(-0.75,-0.75)},fill=Rhodamine] at (12) {\scriptsize $-$};
\node (122) [shift={(0,-0.75)},fill=SeaGreen] at (12) {\scriptsize $+$};
\node (123) [shift={(0.75,-0.75)},fill=Periwinkle] at (12) {\scriptsize $-$};
\draw[-] (121) -- (12);
\draw[-] (122) -- (12);
\draw[-] (123) -- (12);

\node (131) [shift={(-1.5,-0.75)}] at (13) {\scriptsize $+$};
\node (132) [shift={(0,-0.75)},fill=DarkOrchid] at (13) {\scriptsize $-$};
\node (133) [shift={(2.25,-0.75)}] at (13) {\scriptsize $+$};
\draw[-] (131) -- (13);
\draw[-] (132) -- (13);
\draw[-] (133) -- (13);

\node (1311) [shift={(-0.75,-0.75)}] at (131) {\scriptsize $+$};
\node (1312) [shift={(0,-0.75)},fill=BrickRed] at (131) {\scriptsize $-$};
\node (1313) [shift={(0.75,-0.75)},fill=CarnationPink] at (131) {\scriptsize $+$};
\draw[-] (1311) -- (131);
\draw[-] (1312) -- (131);
\draw[-] (1313) -- (131);

\node (1331) [shift={(-0.75,-0.75)}] at (133) {\scriptsize $+$};
\node (1332) [shift={(0,-0.75)},fill=ForestGreen] at (133) {\scriptsize $-$};
\node (1333) [shift={(0.75,-0.75)},fill=BrickRed] at (133) {\scriptsize $+$};
\draw[-] (1331) -- (133);
\draw[-] (1332) -- (133);
\draw[-] (1333) -- (133);

\node (13111) [shift={(-0.75,-0.75)},fill=DarkOrchid] at (1311) {\scriptsize $+$};
\node (13112) [shift={(0,-0.75)},fill=CarnationPink] at (1311) {\scriptsize $-$};
\node (13113) [shift={(0.75,-0.75)},fill=ForestGreen] at (1311) {\scriptsize $+$};
\draw[-] (13111) -- (1311);
\draw[-] (13112) -- (1311);
\draw[-] (13113) -- (1311);

\node (13311) [shift={(-2.25,-0.75)}] at (1331) {\scriptsize $+$};
\node (13312) [shift={(0,-0.75)}] at (1331) {\scriptsize $-$};
\node (13313) [shift={(2.25,-0.75)}] at (1331) {\scriptsize $+$};
\draw[-] (13311) -- (1331);
\draw[-] (13312) -- (1331);
\draw[-] (13313) -- (1331);

\node (133111) [shift={(-0.75,-0.75)},fill=Periwinkle] at (13311) {\scriptsize $+$};
\node (133112) [shift={(0,-0.75)},fill=RoyalBlue] at (13311) {\scriptsize $-$};
\node (133113) [shift={(0.75,-0.75)},fill=darkgray] at (13311) {\scriptsize $+$};
\draw[-] (133111) -- (13311);
\draw[-] (133112) -- (13311);
\draw[-] (133113) -- (13311);

\node (133121) [shift={(-0.75,-0.75)},fill=violet] at (13312) {\scriptsize $-$};
\node (133122) [shift={(0,-0.75)},fill=RoyalBlue] at (13312) {\scriptsize $+$};
\node (133123) [shift={(0.75,-0.75)},fill=lightgray] at (13312) {\scriptsize $-$};
\draw[-] (133121) -- (13312);
\draw[-] (133122) -- (13312);
\draw[-] (133123) -- (13312);

\node (133131) [shift={(-0.75,-0.75)},fill=violet] at (13313) {\scriptsize $+$};
\node (133132) [shift={(0,-0.75)},fill=darkgray] at (13313) {\scriptsize $-$};
\node (133133) [shift={(0.75,-0.75)},fill=lightgray] at (13313) {\scriptsize $+$};
\draw[-] (133131) -- (13313);
\draw[-] (133132) -- (13313);
\draw[-] (133133) -- (13313);
\end{tikzpicture}
\caption{Initial couple}
\end{figure}
\begin{figure}[H]
\centering
\begin{tikzpicture}[scale=0.6, every node/.style={draw, circle, thick, minimum size=5mm,inner sep=0pt}]
    \node (-1t) at (-2,1) {\scriptsize -$1t$};
\node (-2t) at (-4,1) {\scriptsize -$2t$};
\node (-3t) at (-6,2) {\scriptsize -$3t$};
\node (-4t) at (-6,0) {\scriptsize -$4t$};
\node (-1b) at (-2,-3) {\scriptsize -$1b$};
\node (-2b) at (-4,-3) {\scriptsize -$2b$};
\node (-3b) at (-6,-4) {\scriptsize -$3b$};
\node (-4b) at (-6,-2) {\scriptsize -$4b$};
\node (1t) at (0,0) {\scriptsize $1t$};
\node (1b) at (0,-2) {\scriptsize $1b$};
\node (2t) at (2,0) {\scriptsize $2t$};
\node (2b) at (2,-2) {\scriptsize $2b$};
\node (3t) at (4,0) {\scriptsize $3t$};
\node (3b) at (4,-2) {\scriptsize $3b$};
\node (4t) at (6,0) {\scriptsize $4t$};
\node (4b) at (6,-2) {\scriptsize $4b$};
\node (+1t) at (8,1) {\scriptsize +$1t$};
\node (+2t) at (10,1) {\scriptsize +$2t$};
\node (+3t) at (12,2) {\scriptsize +$3t$};
\node (+4t) at (12,0) {\scriptsize +$4t$};
\node (+1b) at (8,-3) {\scriptsize +$1b$};
\node (+2b) at (10,-3) {\scriptsize +$2b$};
\node (+4b) at (12,-2) {\scriptsize +$4b$};
\node (+3b) at (12,-4) {\scriptsize +$3b$};

    \draw[->] (-1t) -- (1t);
    \draw[->] (1t) -- (1b);
    \draw[<->,green] (2b) -- (1b);
    \draw[->] (1t) -- (2t);
    \draw[->,yellow] (2b) -- (2t);
    \draw[->, black, bend left=5] (2t) to (3t);
    \draw[->, orange, bend right=5] (2t) to (3t);
    \draw[->,blue] (3b) -- (2b);
    \draw[->,purple] (-1t) -- (-2t);
    \draw[->, red, bend left=5] (-2t) to (-3t);
    \draw[->, black, bend right=5] (-2t) to (-3t);
    \draw[->] (-4t) -- (-1t);
    \draw[->,brown] (-3t) -- (-4t);
    \draw[->] (-3t) -- (-1t);
    \draw[->,pink] (-4t) -- (-2t);
    \draw[->] (-4b) -- (-4t);
    \draw[<-,lime] (-1b) -- (-2b);
    
    \draw[->, teal, bend left=5] (-2b) to (-3b);
    \draw[<-, gray, bend right=5] (-2b) to (-3b);
    \draw[<-] (-4b) -- (-1b);
    \draw[<-] (-3b) -- (-4b);
    \draw[<-,cyan] (-3b) -- (-1b);
    \draw[<-] (-4b) -- (-2b);
    \draw[->,magenta] (1b) -- (-1b);
    \draw[->] (3t) -- (3b);
    \draw[->] (4t) -- (4b);
    \draw[<-,Melon] (4t) -- (3t);
    \draw[->, SeaGreen, bend left=5] (3b) to (4b);
    \draw[<-, Rhodamine, bend right=5] (3b) to (4b);
    \draw[->] (+1t) -- (+2t);
    \draw[->, CarnationPink, bend left=5] (+2t) to (+3t);
    \draw[->, black, bend right=5] (+2t) to (+3t);
    \draw[->] (+4t) -- (+1t);
    \draw[->,BrickRed] (+3t) -- (+4t);
    \draw[->] (+3t) -- (+1t);
    \draw[->,ForestGreen] (+4t) -- (+2t);
    \draw[->] (+4b) -- (+4t);
    \draw[<-,darkgray] (+1b) -- (+2b);
    \draw[<-, violet, bend left=5] (+2b) to (+3b);
    \draw[<-, lightgray, bend right=5] (+2b) to (+3b);
    \draw[<-] (+4b) -- (+1b);
    \draw[<-] (+3b) -- (+4b);
    \draw[<-,RoyalBlue] (+3b) -- (+1b);
    \draw[<-] (+4b) -- (+2b);
    \draw[->] (+1t) -- (4t);
    \draw[->,Periwinkle] (4b) -- (+1b);
\end{tikzpicture}
\caption{Molecule associated to the initial couple}
\end{figure}
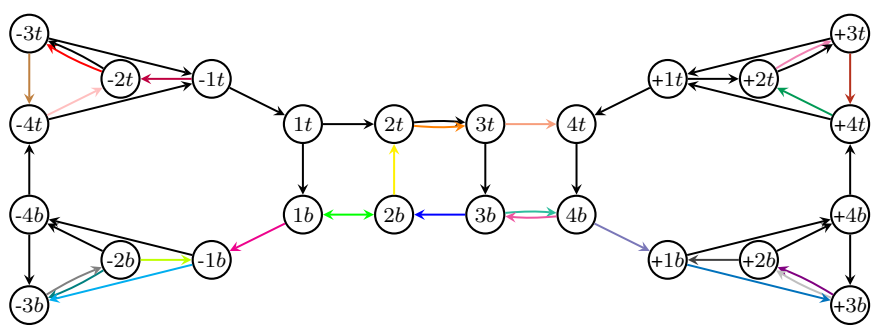

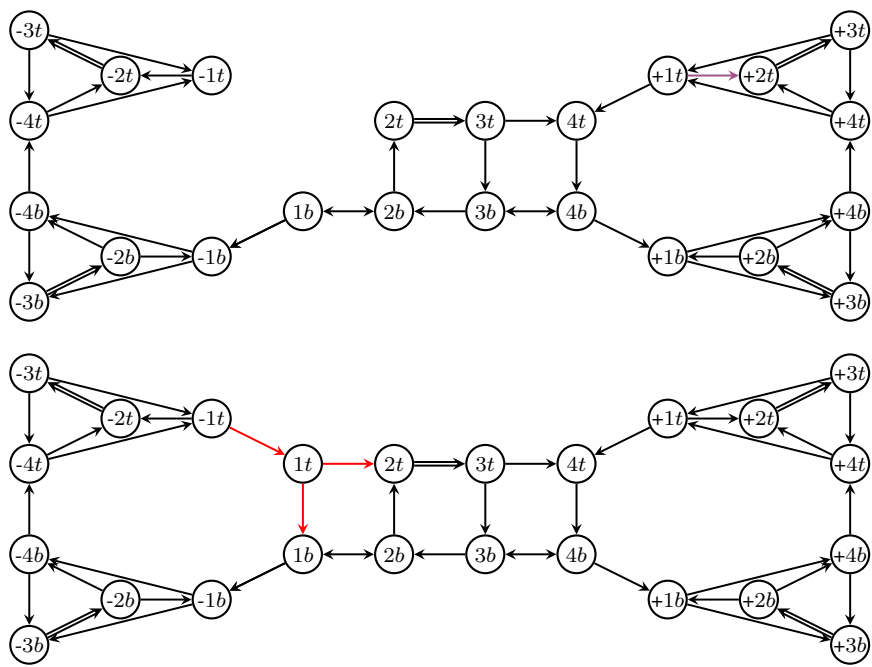
\begin{figure}[H]
\centering
\begin{tikzpicture}[scale=0.6, every node/.style={draw, circle, radius =1cm, thick, minimum size=5mm,inner sep=0pt}]
\node (-1t) at (-2,1) {\scriptsize{-$1t$}};
\node (-2t) at (-4,1) {\scriptsize{-$2t$}};
\node (-3t) at (-6,2) {\scriptsize{-$3t$}};
\node (-4t) at (-6,0) {\scriptsize{-$4t$}};
\node (-1b) at (-2,-3) {\scriptsize{-$1b$}};
\node (-2b) at (-4,-3) {\scriptsize{-$2b$}};
\node (-3b) at (-6,-4) {\scriptsize{-$3b$}};
\node (-4b) at (-6,-2) {\scriptsize{-$4b$}};
\node (1b) at (0,-2) {\scriptsize{$1b$}};
\node (2t) at (2,0) {\scriptsize{$2t$}};
\node (2b) at (2,-2) {\scriptsize{$2b$}};
\node (3t) at (4,0) {\scriptsize{$3t$}};
\node (3b) at (4,-2) {\scriptsize{$3b$}};
\node (4t) at (6,0) {\scriptsize{$4t$}};
\node (4b) at (6,-2) {\scriptsize{$4b$}};
\node (+1t) at (8,1) {\scriptsize{+$1t$}};
\node (+2t) at (10,1) {\scriptsize{+$2t$}};
\node (+3t) at (12,2) { \scriptsize{+$3t$}};
\node (+4t) at (12,0) {\scriptsize{+$4t$}};
\node (+1b) at (8,-3) {\scriptsize{+$1b$}};
\node (+2b) at (10,-3) {\scriptsize{+$2b$}};
\node (+4b) at (12,-2) {\scriptsize{+$4b$}};
\node (+3b) at (12,-4) {\scriptsize{+$3b$}};
	
	\draw[<->] (2b) -- (1b);
	\draw[->] (2b) -- (2t);
	\draw[->,double] (2t) -- (3t);
	\draw[->] (3b) -- (2b);
	\draw[->] (-1t) -- (-2t);
	\draw[->,double] (-2t) -- (-3t);
	\draw[->] (-4t) -- (-1t);
	\draw[->] (-3t) -- (-4t);
	\draw[->] (-3t) -- (-1t);
	\draw[->] (-4t) -- (-2t);
	\draw[->] (-4b) -- (-4t);
	\draw[<-] (-1b) -- (-2b);
	\draw[<-,double] (-2b) -- (-3b);
	\draw[<-] (-4b) -- (-1b);
	\draw[<-] (-3b) -- (-4b);
	\draw[<-] (-3b) -- (-1b);
	\draw[<-] (-4b) -- (-2b);
	\draw[->] (1b) -- (-1b);
	\draw[->] (1b) -- (-1b);
	\draw[->] (3t) -- (3b);
	\draw[->] (4t) -- (4b);
	\draw[<-] (4t) -- (3t);
	\draw[<->] (3b) -- (4b);
	\draw[->,DarkOrchid] (+1t) -- (+2t);
	\draw[->,double] (+2t) -- (+3t);
	\draw[->] (+4t) -- (+1t);
	\draw[->] (+3t) -- (+4t);
	\draw[->] (+3t) -- (+1t);
	\draw[->] (+4t) -- (+2t);
	\draw[->] (+4b) -- (+4t);
	\draw[<-] (+1b) -- (+2b);
	\draw[<-,double] (+2b) -- (+3b);
	\draw[<-] (+4b) -- (+1b);
	\draw[<-] (+3b) -- (+4b);
	\draw[<-] (+3b) -- (+1b);
	\draw[<-] (+4b) -- (+2b);
	\draw[->] (+1t) -- (4t);
	\draw[->] (4b) -- (+1b);
\end{tikzpicture}
\\
\vspace{1.0em}
\begin{tikzpicture}[scale=0.6, every node/.style={draw, circle, thick, minimum size=5mm,inner sep=0pt}]
	\node (-1t) at (-2,1) {\scriptsize{-$1t$}};
	\node (-2t) at (-4,1) {\scriptsize{-$2t$}};
	\node (-3t) at (-6,2) {\scriptsize{-$3t$}};
	\node (-4t) at (-6,0) {\scriptsize{-$4t$}};
	\node (1t) at (0,0) {\scriptsize{$1t$}};
	\node (-1b) at (-2,-3) {\scriptsize{-$1b$}};
	\node (-2b) at (-4,-3) {\scriptsize{-$2b$}};
	\node (-3b) at (-6,-4) {\scriptsize{-$3b$}};
	\node (-4b) at (-6,-2) {\scriptsize{-$4b$}};
	\node (1b) at (0,-2) {\scriptsize{$1b$}};
	\node (2t) at (2,0) {\scriptsize{$2t$}};
	\node (2b) at (2,-2) {\scriptsize{$2b$}};
	\node (3t) at (4,0) {\scriptsize{$3t$}};
	\node (3b) at (4,-2) {\scriptsize{$3b$}};
	\node (4t) at (6,0) {\scriptsize{$4t$}};
	\node (4b) at (6,-2) {\scriptsize{$4b$}};
	\node (+1t) at (8,1) {\scriptsize{+$1t$}};
	\node (+2t) at (10,1) {\scriptsize{+$2t$}};
	\node (+3t) at (12,2) { \scriptsize{+$3t$}};
	\node (+4t) at (12,0) {\scriptsize{+$4t$}};
	\node (+1b) at (8,-3) {\scriptsize{+$1b$}};
	\node (+2b) at (10,-3) {\scriptsize{+$2b$}};
	\node (+4b) at (12,-2) {\scriptsize{+$4b$}};
	\node (+3b) at (12,-4) {\scriptsize{+$3b$}};
	
	\draw[->,red] (-1t) -- (1t);
	\draw[->,red] (1t) -- (1b);
	\draw[<->] (2b) -- (1b);
	\draw[->,red] (1t) -- (2t);
	\draw[->] (2b) -- (2t);
	\draw[->,double] (2t) -- (3t);
	\draw[->] (3b) -- (2b);
	\draw[->] (-1t) -- (-2t);
	\draw[->,double] (-2t) -- (-3t);
	\draw[->] (-4t) -- (-1t);
	\draw[->] (-3t) -- (-4t);
	\draw[->] (-3t) -- (-1t);
	\draw[->] (-4t) -- (-2t);
	\draw[->] (-4b) -- (-4t);
	\draw[<-] (-1b) -- (-2b);
	\draw[<-,double] (-2b) -- (-3b);
	\draw[<-] (-4b) -- (-1b);
	\draw[<-] (-3b) -- (-4b);
	\draw[<-] (-3b) -- (-1b);
	\draw[<-] (-4b) -- (-2b);
	\draw[->] (1b) -- (-1b);
	\draw[->] (1b) -- (-1b);
	\draw[->] (3t) -- (3b);
	\draw[->] (4t) -- (4b);
	\draw[<-] (4t) -- (3t);
	\draw[<->] (3b) -- (4b);
	\draw[->] (+1t) -- (+2t);
	\draw[->,double] (+2t) -- (+3t);
	\draw[->] (+4t) -- (+1t);
	\draw[->] (+3t) -- (+4t);
	\draw[->] (+3t) -- (+1t);
	\draw[->] (+4t) -- (+2t);
	\draw[->] (+4b) -- (+4t);
	\draw[<-] (+1b) -- (+2b);
	\draw[<-,double] (+2b) -- (+3b);
	\draw[<-] (+4b) -- (+1b);
	\draw[<-] (+3b) -- (+4b);
	\draw[<-] (+3b) -- (+1b);
	\draw[<-] (+4b) -- (+2b);
	\draw[->] (+1t) -- (4t);
	\draw[->] (4b) -- (+1b);
\end{tikzpicture}
\caption{Molecule and spanning tree after step 1. We apply $3R1$ to the atom $(1t)$.}
\end{figure}

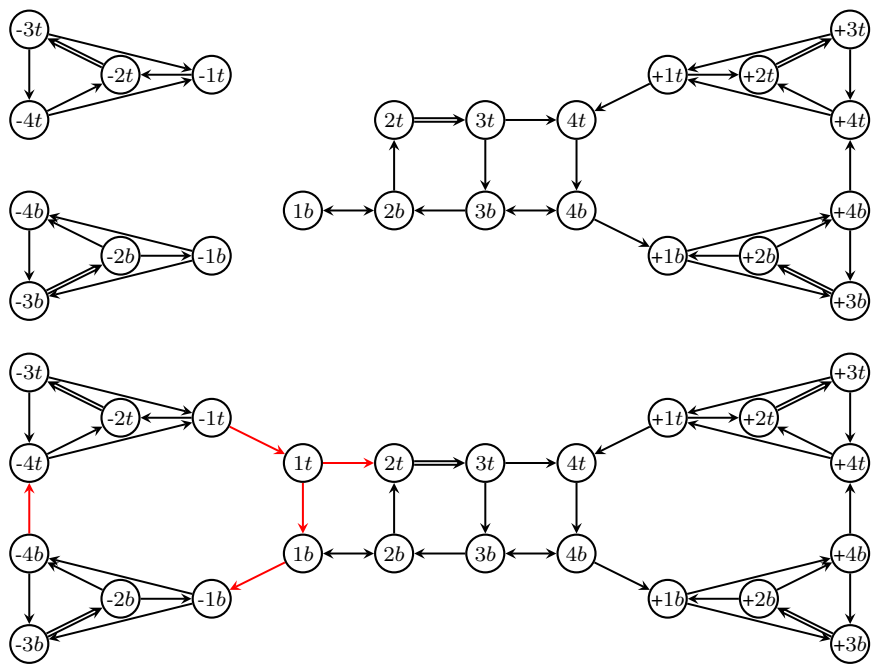
\begin{figure}[H]
	\centering
	\begin{tikzpicture}[scale=0.6, every node/.style={draw, circle, thick, minimum size=5mm,inner sep=0pt}]
	\node (-1t) at (-2,1) {\scriptsize{-$1t$}};
	\node (-2t) at (-4,1) {\scriptsize{-$2t$}};
	\node (-3t) at (-6,2) {\scriptsize{-$3t$}};
	\node (-4t) at (-6,0) {\scriptsize{-$4t$}};
	\node (-1b) at (-2,-3) {\scriptsize{-$1b$}};
	\node (-2b) at (-4,-3) {\scriptsize{-$2b$}};
	\node (-3b) at (-6,-4) {\scriptsize{-$3b$}};
	\node (-4b) at (-6,-2) {\scriptsize{-$4b$}};
	\node (1b) at (0,-2) {\scriptsize{$1b$}};
	\node (2t) at (2,0) {\scriptsize{$2t$}};
	\node (2b) at (2,-2) {\scriptsize{$2b$}};
	\node (3t) at (4,0) {\scriptsize{$3t$}};
	\node (3b) at (4,-2) {\scriptsize{$3b$}};
	\node (4t) at (6,0) {\scriptsize{$4t$}};
	\node (4b) at (6,-2) {\scriptsize{$4b$}};
	\node (+1t) at (8,1) {\scriptsize{+$1t$}};
	\node (+2t) at (10,1) {\scriptsize{+$2t$}};
	\node (+3t) at (12,2) { \scriptsize{+$3t$}};
	\node (+4t) at (12,0) {\scriptsize{+$4t$}};
	\node (+1b) at (8,-3) {\scriptsize{+$1b$}};
	\node (+2b) at (10,-3) {\scriptsize{+$2b$}};
	\node (+4b) at (12,-2) {\scriptsize{+$4b$}};
	\node (+3b) at (12,-4) {\scriptsize{+$3b$}};
		
		\draw[<->] (2b) -- (1b);
		\draw[->] (2b) -- (2t);
		\draw[->,double] (2t) -- (3t);
		\draw[->] (3b) -- (2b);
		\draw[->] (-1t) -- (-2t);
		\draw[->,double] (-2t) -- (-3t);
		\draw[->] (-4t) -- (-1t);
		\draw[->] (-3t) -- (-4t);
		\draw[->] (-3t) -- (-1t);
		\draw[->] (-4t) -- (-2t);
		\draw[<-] (-1b) -- (-2b);
		\draw[<-,double] (-2b) -- (-3b);
		\draw[<-] (-4b) -- (-1b);
		\draw[<-] (-3b) -- (-4b);
		\draw[<-] (-3b) -- (-1b);
		\draw[<-] (-4b) -- (-2b);
		\draw[->] (3t) -- (3b);
		\draw[->] (4t) -- (4b);
		\draw[<-] (4t) -- (3t);
		\draw[<->] (3b) -- (4b);
		\draw[->] (+1t) -- (+2t);
		\draw[->,double] (+2t) -- (+3t);
		\draw[->] (+4t) -- (+1t);
		\draw[->] (+3t) -- (+4t);
		\draw[->] (+3t) -- (+1t);
		\draw[->] (+4t) -- (+2t);
		\draw[->] (+4b) -- (+4t);
		\draw[<-] (+1b) -- (+2b);
		\draw[<-,double] (+2b) -- (+3b);
		\draw[<-] (+4b) -- (+1b);
		\draw[<-] (+3b) -- (+4b);
		\draw[<-] (+3b) -- (+1b);
		\draw[<-] (+4b) -- (+2b);
		\draw[->] (+1t) -- (4t);
		\draw[->] (4b) -- (+1b);
	\end{tikzpicture}
\\
\vspace{1.0em}
	\begin{tikzpicture}[scale=0.6, every node/.style={draw, circle, thick, minimum size=5mm,inner sep=0pt}]
		\node (-1t) at (-2,1) {\scriptsize{-$1t$}};
	\node (-2t) at (-4,1) {\scriptsize{-$2t$}};
	\node (-3t) at (-6,2) {\scriptsize{-$3t$}};
	\node (-4t) at (-6,0) {\scriptsize{-$4t$}};
	\node (1t) at (0,0) {\scriptsize{$1t$}};
	\node (-1b) at (-2,-3) {\scriptsize{-$1b$}};
	\node (-2b) at (-4,-3) {\scriptsize{-$2b$}};
	\node (-3b) at (-6,-4) {\scriptsize{-$3b$}};
	\node (-4b) at (-6,-2) {\scriptsize{-$4b$}};
	\node (1b) at (0,-2) {\scriptsize{$1b$}};
	\node (2t) at (2,0) {\scriptsize{$2t$}};
	\node (2b) at (2,-2) {\scriptsize{$2b$}};
	\node (3t) at (4,0) {\scriptsize{$3t$}};
	\node (3b) at (4,-2) {\scriptsize{$3b$}};
	\node (4t) at (6,0) {\scriptsize{$4t$}};
	\node (4b) at (6,-2) {\scriptsize{$4b$}};
	\node (+1t) at (8,1) {\scriptsize{+$1t$}};
	\node (+2t) at (10,1) {\scriptsize{+$2t$}};
	\node (+3t) at (12,2) { \scriptsize{+$3t$}};
	\node (+4t) at (12,0) {\scriptsize{+$4t$}};
	\node (+1b) at (8,-3) {\scriptsize{+$1b$}};
	\node (+2b) at (10,-3) {\scriptsize{+$2b$}};
	\node (+4b) at (12,-2) {\scriptsize{+$4b$}};
	\node (+3b) at (12,-4) {\scriptsize{+$3b$}};
		
		\draw[->,red] (-1t) -- (1t);
		\draw[->,red] (1t) -- (1b);
		\draw[<->] (2b) -- (1b);
		\draw[->,red] (1t) -- (2t);
		\draw[->] (2b) -- (2t);
		\draw[->,double] (2t) -- (3t);
		\draw[->] (3b) -- (2b);
		\draw[->] (-1t) -- (-2t);
		\draw[->,double] (-2t) -- (-3t);
		\draw[->] (-4t) -- (-1t);
		\draw[->] (-3t) -- (-4t);
		\draw[->] (-3t) -- (-1t);
		\draw[->] (-4t) -- (-2t);
		\draw[->,red] (-4b) -- (-4t);
		\draw[<-] (-1b) -- (-2b);
		\draw[<-,double] (-2b) -- (-3b);
		\draw[<-] (-4b) -- (-1b);
		\draw[<-] (-3b) -- (-4b);
		\draw[<-] (-3b) -- (-1b);
		\draw[<-] (-4b) -- (-2b);
		\draw[->,red] (1b) -- (-1b);
		\draw[->] (3t) -- (3b);
		\draw[->] (4t) -- (4b);
		\draw[<-] (4t) -- (3t);
		\draw[<->] (3b) -- (4b);
		\draw[->] (+1t) -- (+2t);
		\draw[->,double] (+2t) -- (+3t);
		\draw[->] (+4t) -- (+1t);
		\draw[->] (+3t) -- (+4t);
		\draw[->] (+3t) -- (+1t);
		\draw[->] (+4t) -- (+2t);
		\draw[->] (+4b) -- (+4t);
		\draw[<-] (+1b) -- (+2b);
		\draw[<-,double] (+2b) -- (+3b);
		\draw[<-] (+4b) -- (+1b);
		\draw[<-] (+3b) -- (+4b);
		\draw[<-] (+3b) -- (+1b);
		\draw[<-] (+4b) -- (+2b);
		\draw[->] (+1t) -- (4t);
		\draw[->] (4b) -- (+1b);
	\end{tikzpicture}
		\caption{Molecule and spanning tree after step 2 and step 3. We remove the bridges $(1b,\text{-}1b)$ and $(\text{-}4b,\text{-}4t)$.}
\end{figure}

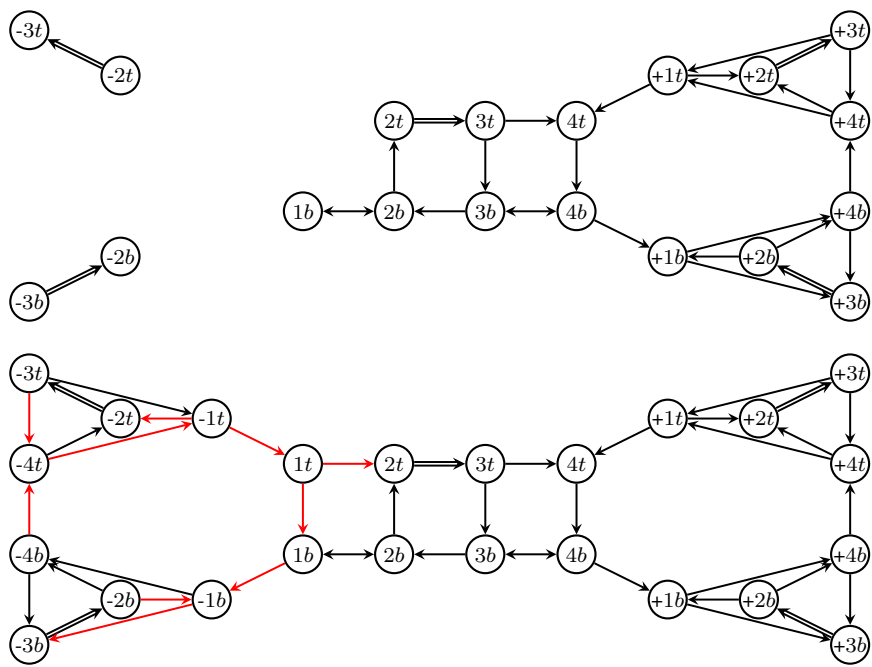
\begin{figure}[H]
	\centering
	\begin{tikzpicture}[scale=0.6, every node/.style={draw, circle, thick, minimum size=5mm,inner sep=0pt}]
		\node (-2t) at (-4,1) {\scriptsize -$2t$};
\node (-3t) at (-6,2) {\scriptsize -$3t$};
\node (-2b) at (-4,-3) {\scriptsize -$2b$};
\node (-3b) at (-6,-4) {\scriptsize -$3b$};
\node (1b) at (0,-2) {\scriptsize $1b$};
\node (2t) at (2,0) {\scriptsize $2t$};
\node (2b) at (2,-2) {\scriptsize $2b$};
\node (3t) at (4,0) {\scriptsize $3t$};
\node (3b) at (4,-2) {\scriptsize $3b$};
\node (4t) at (6,0) {\scriptsize $4t$};
\node (4b) at (6,-2) {\scriptsize $4b$};
\node (+1t) at (8,1) {\scriptsize +$1t$};
\node (+2t) at (10,1) {\scriptsize +$2t$};
\node (+3t) at (12,2) {\scriptsize +$3t$};
\node (+4t) at (12,0) {\scriptsize +$4t$};
\node (+1b) at (8,-3) {\scriptsize +$1b$};
\node (+2b) at (10,-3) {\scriptsize +$2b$};
\node (+4b) at (12,-2) {\scriptsize +$4b$};
\node (+3b) at (12,-4) {\scriptsize +$3b$};
		
		\draw[<->] (2b) -- (1b);
		\draw[->] (2b) -- (2t);
		\draw[->,double] (2t) -- (3t);
		\draw[->] (3b) -- (2b);
		\draw[->,double] (-2t) -- (-3t);
		\draw[<-,double] (-2b) -- (-3b);
		\draw[->] (3t) -- (3b);
		\draw[->] (4t) -- (4b);
		\draw[<-] (4t) -- (3t);
		\draw[<->] (3b) -- (4b);
		\draw[->] (+1t) -- (+2t);
		\draw[->,double] (+2t) -- (+3t);
		\draw[->] (+4t) -- (+1t);
		\draw[->] (+3t) -- (+4t);
		\draw[->] (+3t) -- (+1t);
		\draw[->] (+4t) -- (+2t);
		\draw[->] (+4b) -- (+4t);
		\draw[<-] (+1b) -- (+2b);
		\draw[<-,double] (+2b) -- (+3b);
		\draw[<-] (+4b) -- (+1b);
		\draw[<-] (+3b) -- (+4b);
		\draw[<-] (+3b) -- (+1b);
		\draw[<-] (+4b) -- (+2b);
		\draw[->] (+1t) -- (4t);
		\draw[->] (4b) -- (+1b);
	\end{tikzpicture}
	\\
	\vspace{1.0em}
	\begin{tikzpicture}[scale=0.6, every node/.style={draw, circle, thick, minimum size=5mm,inner sep=0pt}]
		\node (-1t) at (-2,1) {\scriptsize{-$1t$}};
		\node (-2t) at (-4,1) {\scriptsize{-$2t$}};
		\node (-3t) at (-6,2) {\scriptsize{-$3t$}};
		\node (-4t) at (-6,0) {\scriptsize{-$4t$}};
		\node (-1b) at (-2,-3) {\scriptsize{-$1b$}};
		\node (-2b) at (-4,-3) {\scriptsize{-$2b$}};
		\node (-3b) at (-6,-4) {\scriptsize{-$3b$}};
		\node (-4b) at (-6,-2) {\scriptsize{-$4b$}};
		\node (1t) at (0,0) {\scriptsize{$1t$}};
		\node (1b) at (0,-2) {\scriptsize{$1b$}};
		\node (2t) at (2,0) {\scriptsize{$2t$}};
		\node (2b) at (2,-2) {\scriptsize{$2b$}};
		\node (3t) at (4,0) {\scriptsize{$3t$}};
		\node (3b) at (4,-2) {\scriptsize{$3b$}};
		\node (4t) at (6,0) {\scriptsize{$4t$}};
		\node (4b) at (6,-2) {\scriptsize{$4b$}};
		\node (+1t) at (8,1) {\scriptsize{+$1t$}};
		\node (+2t) at (10,1) {\scriptsize{+$2t$}};
		\node (+3t) at (12,2) {\scriptsize{+$3t$}};
		\node (+4t) at (12,0) {\scriptsize{+$4t$}};
		\node (+1b) at (8,-3) {\scriptsize{+$1b$}};
		\node (+2b) at (10,-3) {\scriptsize{+$2b$}};
		\node (+4b) at (12,-2) {\scriptsize{+$4b$}};
		\node (+3b) at (12,-4) {\scriptsize{+$3b$}};
		
		\draw[->,red] (-1t) -- (1t);
		\draw[->,red] (1t) -- (1b);
		\draw[<->] (2b) -- (1b);
		\draw[->,red] (1t) -- (2t);
		\draw[->] (2b) -- (2t);
		\draw[->,double] (2t) -- (3t);
		\draw[->] (3b) -- (2b);
		\draw[->,red] (-1t) -- (-2t);
		\draw[->,double] (-2t) -- (-3t);
		\draw[->,red] (-4t) -- (-1t);
		\draw[->,red] (-3t) -- (-4t);
		\draw[->] (-3t) -- (-1t);
		\draw[->] (-4t) -- (-2t);
		\draw[->,red] (-4b) -- (-4t);
		\draw[<-,red] (-1b) -- (-2b);
		\draw[<-,double] (-2b) -- (-3b);
		\draw[<-] (-4b) -- (-1b);
		\draw[<-] (-3b) -- (-4b);
		\draw[<-,red] (-3b) -- (-1b);
		\draw[<-] (-4b) -- (-2b);
		\draw[->,red] (1b) -- (-1b);
		\draw[->] (3t) -- (3b);
		\draw[->] (4t) -- (4b);
		\draw[<-] (4t) -- (3t);
		\draw[<->] (3b) -- (4b);
		\draw[->] (+1t) -- (+2t);
		\draw[->,double] (+2t) -- (+3t);
		\draw[->] (+4t) -- (+1t);
		\draw[->] (+3t) -- (+4t);
		\draw[->] (+3t) -- (+1t);
		\draw[->] (+4t) -- (+2t);
		\draw[->] (+4b) -- (+4t);
		\draw[<-] (+1b) -- (+2b);
		\draw[<-,double] (+2b) -- (+3b);
		\draw[<-] (+4b) -- (+1b);
		\draw[<-] (+3b) -- (+4b);
		\draw[<-] (+3b) -- (+1b);
		\draw[<-] (+4b) -- (+2b);
		\draw[->] (+1t) -- (4t);
		\draw[->] (4b) -- (+1b);
	\end{tikzpicture}
	\caption{Molecule and spanning tree after steps 4 and 5. We apply twice (3S3-5G), on $(\text{-}1t,\text{-}4t)$ and $(\text{-}1b,\text{-}4b)$.}
\end{figure}

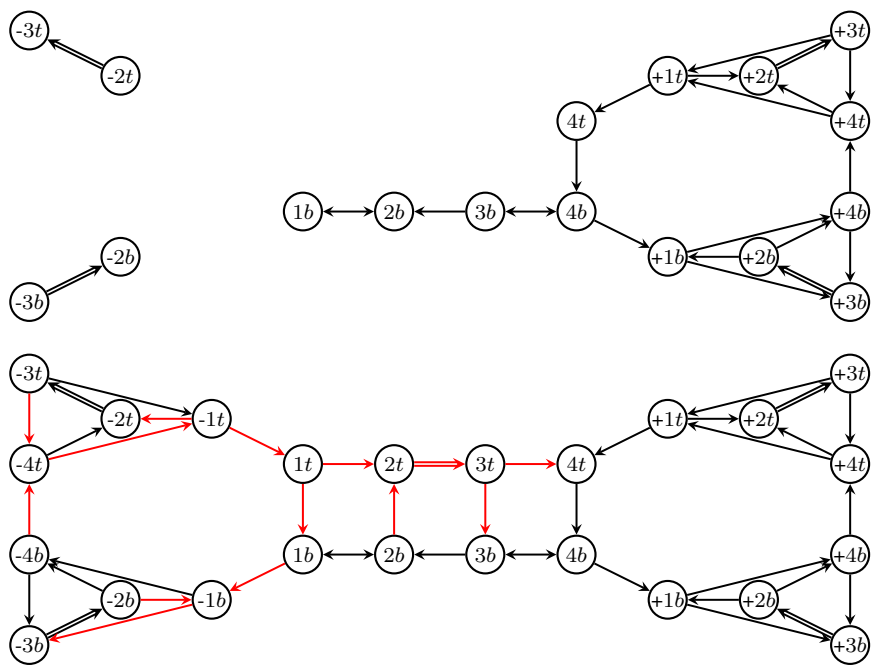
\begin{figure}[H]
	\centering
	\begin{tikzpicture}[scale=0.6, every node/.style={draw, circle, thick, minimum size=5mm,inner sep=0pt}]
		\node (-2t) at (-4,1) {\scriptsize -$2t$};
\node (-3t) at (-6,2) {\scriptsize -$3t$};
\node (-2b) at (-4,-3) {\scriptsize -$2b$};
\node (-3b) at (-6,-4) {\scriptsize -$3b$};
\node (1b) at (0,-2) {\scriptsize $1b$};
\node (2b) at (2,-2) {\scriptsize $2b$};
\node (3b) at (4,-2) {\scriptsize $3b$};
\node (4t) at (6,0) {\scriptsize $4t$};
\node (4b) at (6,-2) {\scriptsize $4b$};
\node (+1t) at (8,1) {\scriptsize +$1t$};
\node (+2t) at (10,1) {\scriptsize +$2t$};
\node (+3t) at (12,2) {\scriptsize +$3t$};
\node (+4t) at (12,0) {\scriptsize +$4t$};
\node (+1b) at (8,-3) {\scriptsize +$1b$};
\node (+2b) at (10,-3) {\scriptsize +$2b$};
\node (+4b) at (12,-2) {\scriptsize +$4b$};
\node (+3b) at (12,-4) {\scriptsize +$3b$};
		
		\draw[<->] (2b) -- (1b);
		\draw[->] (3b) -- (2b);
		\draw[->,double] (-2t) -- (-3t);
		\draw[<-,double] (-2b) -- (-3b);
		\draw[->] (4t) -- (4b);
		\draw[<->] (3b) -- (4b);
		\draw[->] (+1t) -- (+2t);
		\draw[->,double] (+2t) -- (+3t);
		\draw[->] (+4t) -- (+1t);
		\draw[->] (+3t) -- (+4t);
		\draw[->] (+3t) -- (+1t);
		\draw[->] (+4t) -- (+2t);
		\draw[->] (+4b) -- (+4t);
		\draw[<-] (+1b) -- (+2b);
		\draw[<-,double] (+2b) -- (+3b);
		\draw[<-] (+4b) -- (+1b);
		\draw[<-] (+3b) -- (+4b);
		\draw[<-] (+3b) -- (+1b);
		\draw[<-] (+4b) -- (+2b);
		\draw[->] (+1t) -- (4t);
		\draw[->] (4b) -- (+1b);
	\end{tikzpicture}
	\\
	\vspace{1.0em}
	\begin{tikzpicture}[scale=0.6, every node/.style={draw, circle, thick, minimum size=5mm,inner sep=0pt}]
		\node (-1t) at (-2,1) {\scriptsize -$1t$};
\node (-2t) at (-4,1) {\scriptsize -$2t$};
\node (-3t) at (-6,2) {\scriptsize -$3t$};
\node (-4t) at (-6,0) {\scriptsize -$4t$};
\node (-1b) at (-2,-3) {\scriptsize -$1b$};
\node (-2b) at (-4,-3) {\scriptsize -$2b$};
\node (-3b) at (-6,-4) {\scriptsize -$3b$};
\node (-4b) at (-6,-2) {\scriptsize -$4b$};
\node (1t) at (0,0) {\scriptsize $1t$};
\node (1b) at (0,-2) {\scriptsize $1b$};
\node (2t) at (2,0) {\scriptsize $2t$};
\node (2b) at (2,-2) {\scriptsize $2b$};
\node (3t) at (4,0) {\scriptsize $3t$};
\node (3b) at (4,-2) {\scriptsize $3b$};
\node (4t) at (6,0) {\scriptsize $4t$};
\node (4b) at (6,-2) {\scriptsize $4b$};
\node (+1t) at (8,1) {\scriptsize +$1t$};
\node (+2t) at (10,1) {\scriptsize +$2t$};
\node (+3t) at (12,2) {\scriptsize +$3t$};
\node (+4t) at (12,0) {\scriptsize +$4t$};
\node (+1b) at (8,-3) {\scriptsize +$1b$};
\node (+2b) at (10,-3) {\scriptsize +$2b$};
\node (+4b) at (12,-2) {\scriptsize +$4b$};
\node (+3b) at (12,-4) {\scriptsize +$3b$};
		
		\draw[->,red] (-1t) -- (1t);
		\draw[->,red] (1t) -- (1b);
		\draw[<->] (2b) -- (1b);
		\draw[->,red] (1t) -- (2t);
		\draw[->,red] (2b) -- (2t);
		\draw[->,double,red] (2t) -- (3t);
		\draw[->] (3b) -- (2b);
		\draw[->,red] (-1t) -- (-2t);
		\draw[->,double] (-2t) -- (-3t);
		\draw[->,red] (-4t) -- (-1t);
		\draw[->,red] (-3t) -- (-4t);
		\draw[->] (-3t) -- (-1t);
		\draw[->] (-4t) -- (-2t);
		\draw[->,red] (-4b) -- (-4t);
		\draw[<-,red] (-1b) -- (-2b);
		\draw[<-,double] (-2b) -- (-3b);
		\draw[<-] (-4b) -- (-1b);
		\draw[<-] (-3b) -- (-4b);
		\draw[<-,red] (-3b) -- (-1b);
		\draw[<-] (-4b) -- (-2b);
		\draw[->,red] (1b) -- (-1b);
		\draw[->,red] (3t) -- (3b);
		\draw[->] (4t) -- (4b);
		\draw[<-,red] (4t) -- (3t);
		\draw[<->] (3b) -- (4b);
		\draw[->] (+1t) -- (+2t);
		\draw[->,double] (+2t) -- (+3t);
		\draw[->] (+4t) -- (+1t);
		\draw[->] (+3t) -- (+4t);
		\draw[->] (+3t) -- (+1t);
		\draw[->] (+4t) -- (+2t);
		\draw[->] (+4b) -- (+4t);
		\draw[<-] (+1b) -- (+2b);
		\draw[<-,double] (+2b) -- (+3b);
		\draw[<-] (+4b) -- (+1b);
		\draw[<-] (+3b) -- (+4b);
		\draw[<-] (+3b) -- (+1b);
		\draw[<-] (+4b) -- (+2b);
		\draw[->] (+1t) -- (4t);
		\draw[->] (4b) -- (+1b);
	\end{tikzpicture}
	\caption{Molecule and spanning tree after step 6. We apply (3D4G) on $(2t,3t)$.}
\end{figure}

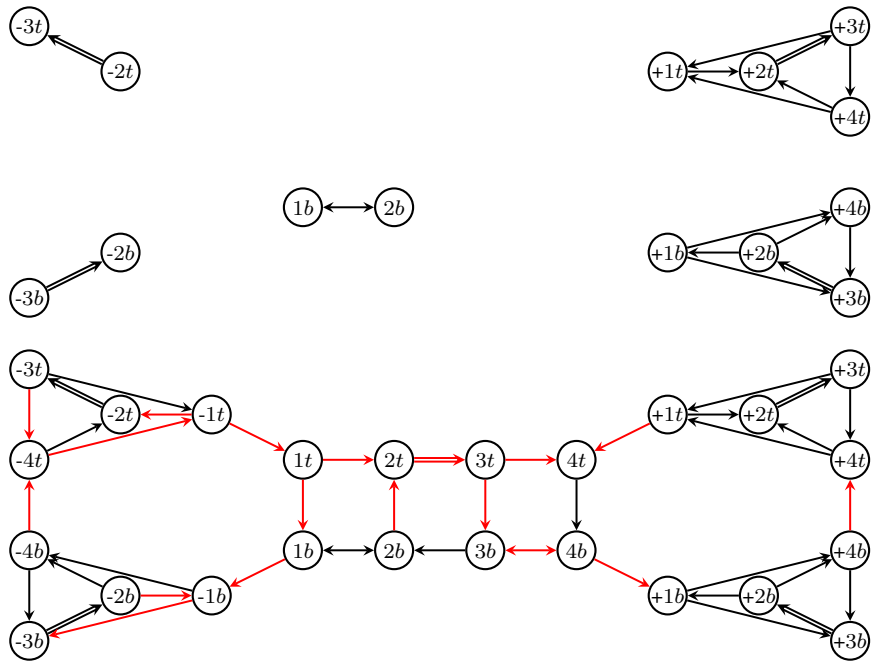
\begin{figure}[H]
	\centering
	\begin{tikzpicture}[scale=0.6, every node/.style={draw, circle, thick, minimum size=5mm,inner sep=0pt}]
		\node (-2t) at (-4,1) {\scriptsize -$2t$};
\node (-3t) at (-6,2) {\scriptsize -$3t$};
\node (-2b) at (-4,-3) {\scriptsize -$2b$};
\node (-3b) at (-6,-4) {\scriptsize -$3b$};
\node (1b) at (0,-2) {\scriptsize $1b$};
\node (2b) at (2,-2) {\scriptsize $2b$};
\node (+1t) at (8,1) {\scriptsize +$1t$};
\node (+2t) at (10,1) {\scriptsize +$2t$};
\node (+3t) at (12,2) {\scriptsize +$3t$};
\node (+4t) at (12,0) {\scriptsize +$4t$};
\node (+1b) at (8,-3) {\scriptsize +$1b$};
\node (+2b) at (10,-3) {\scriptsize +$2b$};
\node (+4b) at (12,-2) {\scriptsize +$4b$};
\node (+3b) at (12,-4) {\scriptsize +$3b$};
		
		\draw[<->] (2b) -- (1b);
		\draw[->,double] (-2t) -- (-3t);
		\draw[<-,double] (-2b) -- (-3b);
		\draw[->] (+1t) -- (+2t);
		\draw[->,double] (+2t) -- (+3t);
		\draw[->] (+4t) -- (+1t);
		\draw[->] (+3t) -- (+4t);
		\draw[->] (+3t) -- (+1t);
		\draw[->] (+4t) -- (+2t);
		\draw[<-] (+1b) -- (+2b);
		\draw[<-,double] (+2b) -- (+3b);
		\draw[<-] (+4b) -- (+1b);
		\draw[<-] (+3b) -- (+4b);
		\draw[<-] (+3b) -- (+1b);
		\draw[<-] (+4b) -- (+2b);
	\end{tikzpicture}
	\\
	\vspace{1.0em}
	\begin{tikzpicture}[scale=0.6, every node/.style={draw, circle, thick, minimum size=5mm,inner sep=0pt}]
		\node (-1t) at (-2,1) {\scriptsize -$1t$};
\node (-2t) at (-4,1) {\scriptsize -$2t$};
\node (-3t) at (-6,2) {\scriptsize -$3t$};
\node (-4t) at (-6,0) {\scriptsize -$4t$};
\node (-1b) at (-2,-3) {\scriptsize -$1b$};
\node (-2b) at (-4,-3) {\scriptsize -$2b$};
\node (-3b) at (-6,-4) {\scriptsize -$3b$};
\node (-4b) at (-6,-2) {\scriptsize -$4b$};
\node (1t) at (0,0) {\scriptsize $1t$};
\node (1b) at (0,-2) {\scriptsize $1b$};
\node (2t) at (2,0) {\scriptsize $2t$};
\node (2b) at (2,-2) {\scriptsize $2b$};
\node (3t) at (4,0) {\scriptsize $3t$};
\node (3b) at (4,-2) {\scriptsize $3b$};
\node (4t) at (6,0) {\scriptsize $4t$};
\node (4b) at (6,-2) {\scriptsize $4b$};
\node (+1t) at (8,1) {\scriptsize +$1t$};
\node (+2t) at (10,1) {\scriptsize +$2t$};
\node (+3t) at (12,2) {\scriptsize +$3t$};
\node (+4t) at (12,0) {\scriptsize +$4t$};
\node (+1b) at (8,-3) {\scriptsize +$1b$};
\node (+2b) at (10,-3) {\scriptsize +$2b$};
\node (+4b) at (12,-2) {\scriptsize +$4b$};
\node (+3b) at (12,-4) {\scriptsize +$3b$};
		
		\draw[->,red] (-1t) -- (1t);
		\draw[->,red] (1t) -- (1b);
		\draw[<->] (2b) -- (1b);
		\draw[->,red] (1t) -- (2t);
		\draw[->,red] (2b) -- (2t);
		\draw[->,double,red] (2t) -- (3t);
		\draw[->] (3b) -- (2b);
		\draw[->,red] (-1t) -- (-2t);
		\draw[->,double] (-2t) -- (-3t);
		\draw[->,red] (-4t) -- (-1t);
		\draw[->,red] (-3t) -- (-4t);
		\draw[->] (-3t) -- (-1t);
		\draw[->] (-4t) -- (-2t);
		\draw[->,red] (-4b) -- (-4t);
		\draw[<-,red] (-1b) -- (-2b);
		\draw[<-,double] (-2b) -- (-3b);
		\draw[<-] (-4b) -- (-1b);
		\draw[<-] (-3b) -- (-4b);
		\draw[<-,red] (-3b) -- (-1b);
		\draw[<-] (-4b) -- (-2b);
		\draw[->,red] (1b) -- (-1b);
		\draw[->,red] (3t) -- (3b);
		\draw[->] (4t) -- (4b);
		\draw[<-,red] (4t) -- (3t);
		\draw[<->,red] (3b) -- (4b);
		\draw[->] (+1t) -- (+2t);
		\draw[->,double] (+2t) -- (+3t);
		\draw[->] (+4t) -- (+1t);
		\draw[->] (+3t) -- (+4t);
		\draw[->] (+3t) -- (+1t);
		\draw[->] (+4t) -- (+2t);
		\draw[->,red] (+4b) -- (+4t);
		\draw[<-] (+1b) -- (+2b);
		\draw[<-,double] (+2b) -- (+3b);
		\draw[<-] (+4b) -- (+1b);
		\draw[<-] (+3b) -- (+4b);
		\draw[<-] (+3b) -- (+1b);
		\draw[<-] (+4b) -- (+2b);
		\draw[->,red] (+1t) -- (4t);
		\draw[->,red] (4b) -- (+1b);
	\end{tikzpicture}
	\caption{Molecule and spanning tree after steps 7-12. We apply BR on $(3b,2b)$, 2R-1 on $ 3b $, BR on $(\text{+}1t,4t)$, BR on $ (4t,4b) $, BR on $ (4b,\text{+}1b) $ and BR on $(\text{+}4b,\text{+}4t)$.}
\end{figure}

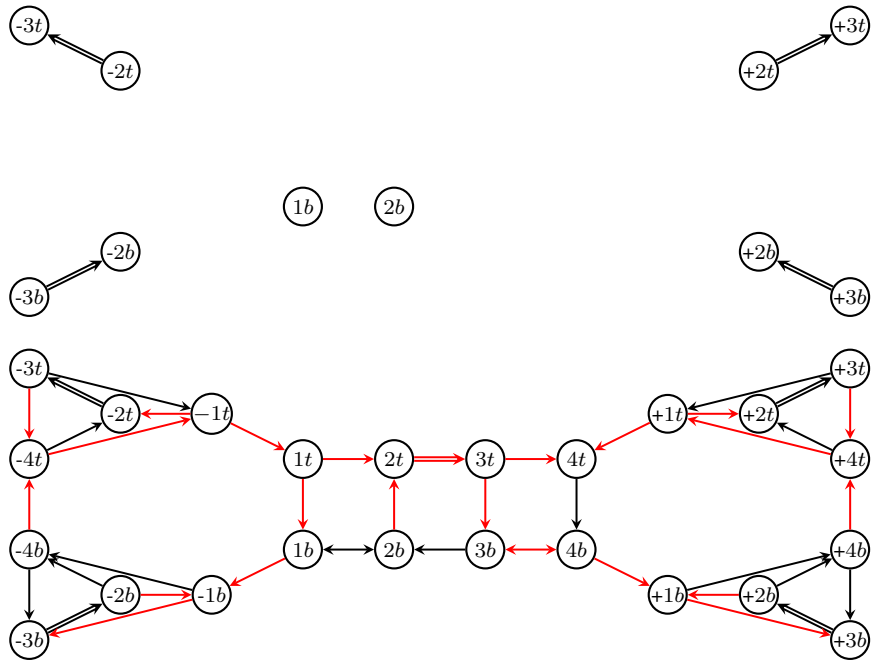
\begin{figure}[H]
	\centering
	\begin{tikzpicture}[scale=0.6, every node/.style={draw, circle, thick, minimum size=5mm,inner sep=0pt}]
		\node (-2t) at (-4,1) {\scriptsize -$2t$};
\node (-3t) at (-6,2) {\scriptsize -$3t$};
\node (-2b) at (-4,-3) {\scriptsize -$2b$};
\node (-3b) at (-6,-4) {\scriptsize -$3b$};
\node (1b) at (0,-2) {\scriptsize $1b$};
\node (2b) at (2,-2) {\scriptsize $2b$};
\node (+2t) at (10,1) {\scriptsize +$2t$};
\node (+3t) at (12,2) {\scriptsize +$3t$};
\node (+2b) at (10,-3) {\scriptsize +$2b$};
\node (+3b) at (12,-4) {\scriptsize +$3b$};
		
		\draw[->,double] (-2t) -- (-3t);
		\draw[<-,double] (-2b) -- (-3b);
		\draw[->,double] (+2t) -- (+3t);
		\draw[<-,double] (+2b) -- (+3b);
	\end{tikzpicture}
	\\
	\vspace{1.0em}
	\begin{tikzpicture}[scale=0.6, every node/.style={draw, circle, thick, minimum size=5mm,inner sep=0pt}]
		\node (-1t) at (-2,1) {\scriptsize $-1t$};
\node (-2t) at (-4,1) {\scriptsize -$2t$};
\node (-3t) at (-6,2) {\scriptsize -$3t$};
\node (-4t) at (-6,0) {\scriptsize -$4t$};
\node (-1b) at (-2,-3) {\scriptsize -$1b$};
\node (-2b) at (-4,-3) {\scriptsize -$2b$};
\node (-3b) at (-6,-4) {\scriptsize -$3b$};
\node (-4b) at (-6,-2) {\scriptsize -$4b$};
\node (1t) at (0,0) {\scriptsize $1t$};
\node (1b) at (0,-2) {\scriptsize $1b$};
\node (2t) at (2,0) {\scriptsize $2t$};
\node (2b) at (2,-2) {\scriptsize $2b$};
\node (3t) at (4,0) {\scriptsize $3t$};
\node (3b) at (4,-2) {\scriptsize $3b$};
\node (4t) at (6,0) {\scriptsize $4t$};
\node (4b) at (6,-2) {\scriptsize $4b$};
\node (+1t) at (8,1) {\scriptsize +$1t$};
\node (+2t) at (10,1) {\scriptsize +$2t$};
\node (+3t) at (12,2) {\scriptsize +$3t$};
\node (+4t) at (12,0) {\scriptsize +$4t$};
\node (+1b) at (8,-3) {\scriptsize +$1b$};
\node (+2b) at (10,-3) {\scriptsize +$2b$};
\node (+4b) at (12,-2) {\scriptsize +$4b$};
\node (+3b) at (12,-4) {\scriptsize +$3b$};
		
		\draw[->,red] (-1t) -- (1t);
		\draw[->,red] (1t) -- (1b);
		\draw[<->] (2b) -- (1b);
		\draw[->,red] (1t) -- (2t);
		\draw[->,red] (2b) -- (2t);
		\draw[->,double,red] (2t) -- (3t);
		\draw[->] (3b) -- (2b);
		\draw[->,red] (-1t) -- (-2t);
		\draw[->,double] (-2t) -- (-3t);
		\draw[->,red] (-4t) -- (-1t);
		\draw[->,red] (-3t) -- (-4t);
		\draw[->] (-3t) -- (-1t);
		\draw[->] (-4t) -- (-2t);
		\draw[->,red] (-4b) -- (-4t);
		\draw[<-,red] (-1b) -- (-2b);
		\draw[<-,double] (-2b) -- (-3b);
		\draw[<-] (-4b) -- (-1b);
		\draw[<-] (-3b) -- (-4b);
		\draw[<-,red] (-3b) -- (-1b);
		\draw[<-] (-4b) -- (-2b);
		\draw[->,red] (1b) -- (-1b);
		\draw[->,red] (3t) -- (3b);
		\draw[->] (4t) -- (4b);
		\draw[<-,red] (4t) -- (3t);
		\draw[<->,red] (3b) -- (4b);
		\draw[->,red] (+1t) -- (+2t);
		\draw[->,double] (+2t) -- (+3t);
		\draw[->,red] (+4t) -- (+1t);
		\draw[->,red] (+3t) -- (+4t);
		\draw[->] (+3t) -- (+1t);
		\draw[->] (+4t) -- (+2t);
		\draw[->,red] (+4b) -- (+4t);
		\draw[<-,red] (+1b) -- (+2b);
		\draw[<-,double] (+2b) -- (+3b);
		\draw[<-] (+4b) -- (+1b);
		\draw[<-] (+3b) -- (+4b);
		\draw[<-,red] (+3b) -- (+1b);
		\draw[<-] (+4b) -- (+2b);
		\draw[->,red] (+1t) -- (4t);
		\draw[->,red] (4b) -- (+1b);
	\end{tikzpicture}
	\caption{Molecule and spanning tree after steps 13-15. We apply BR on $(1b,2b)$, (3S3-5G) on $(\text{+}1t,\text{+}4t)$ and $(\text{+}1b,\text{+}4b)$.}
\end{figure}


\newpage
\begin{thebibliography}{Cha10}
	\expandafter\ifx\csname url\endcsname\relax
	\def\url#1{\texttt{#1}}\fi
	\expandafter\ifx\csname urlprefix\endcsname\relax\def\urlprefix{URL }\fi
	\expandafter\ifx\csname href\endcsname\relax
	\def\href#1#2{#2}\fi
	\expandafter\ifx\csname burlalt\endcsname\relax
	\def\burlalt#1#2{\href{#2}{\texttt{#1}}}\fi
	
	
	
	
	
	\bibitem{BDNY24}
	{\rm B.~Bringmann Y.~Deng, A.~Nahmod, H.~Yue }.
	\newblock \emph{Invariant Gibbs measures for the three dimensional cubic nonlinear wave equation}.
	\newblock  Invent. Math. \textbf{236}, (2024),  1133–1411.
	\newblock
	\burlalt{doi:10.1007/s00222-024-01254-4}{https://dx.doi.org/10.1007/s00222-024-01254-4}.
	
	
	
	
		\bibitem{BGHS21}
	{\rm T. Buckmaster, P. Germain, Z. Hani, J. Shatah}.
	\newblock \emph{Onset of the wave turbulence description of the longtime behavior of the nonlinear Schroedinger equation}.
	\newblock  Invent. math. \textbf{225}, (2021),  787–855.
	\newblock
	\burlalt{doi:10.1007/s00222-021-01039-z}{https://dx.doi.org/10.1007/s00222-021-01039-z}.
	
\bibitem{BGSS22}	T. Bodineau, I. Gallagher, L. Saint-Raymond, S. Simonella. \emph{Cluster expansion for a dilute hard sphere gas dynamics.}
	J. Math. Phys. \textbf{63}, no. 7, (2022), Paper No. 073301.
		\burlalt{doi:10.1063/5.0091199}{https://dx.doi.org/10.1063/5.0091199}.
		
		
	\bibitem{BGSS23} T. Bodineau, I. Gallagher, L. Saint-Raymond, S. Simonella. \emph{Statistical dynamics of a hard sphere gas: fluctuating
	Boltzmann equation and large deviations.} Ann. of Math. (2) \textbf{198}, no. 3, (2023), 1047–1201.
	\burlalt{doi:10.4007/annals.2023.198.3.3}{https://dx.doi.org/10.4007/annals.2023.198.3.3}.
	
		\bibitem{BP57}
	N.~N. Bogoliubow, O.~S. Parasiuk.
	\newblock { \em \"{U}ber die {M}ultiplikation der {K}ausalfunktionen in der
		{Q}uantentheorie der {F}elder.}
	\newblock Acta Math. \textbf{97}, (1957), 227--266.
	\newblock
	\burlalt{doi:10.1007/BF02392399}{http://dx.doi.org/10.1007/BF02392399}.
	
	\bibitem{CLRS22} T. H. Cormen, , C. E. Leiserson, R. L. Rivest, S. Clifford. 
	 \emph{Introduction to Algorithms, fourth edition 4th Edition}. The MIT Press, 4th Edition, (2022).
	
	
	
	\bibitem{DH21}
	{\rm Y.~Deng,  Z.~Hani}.
	\newblock \emph{On the derivation of the wave kinetic equation for NLS}.
	\newblock Forum Math. Pi, (2021), \textbf{9}, e6, (2021), 1–37.
	\newblock
	\burlalt{doi:10.1017/fmp.2021.6}{https://dx.doi.org/10.1017/fmp.2021.6}.
	
	\bibitem{DH23}
	{\rm Y.~Deng,  Z.~Hani}.
	\newblock \emph{Full derivation of the wave kinetic equation}.
	\newblock  Invent. math. \textbf{233}, (2023),  543-724.
	\newblock
	\burlalt{doi:10.1007/s00222-023-01189-2}{https://dx.doi.org/10.1007/s00222-023-01189-2}.
	
	\bibitem{DH2301}
	Y.~Deng, Z.~Hani.
	\newblock {\textsl{Derivation of the wave kinetic equation: Full range of scaling laws.}} Ta appear in Mem. Amer. Math. Soc.
	\newblock \burlalt{arXiv:2301.07063}{http://arxiv.org/abs/2301.07063}.
	
	\bibitem{DH2311}
	Y.~Deng, Z.~Hani.
	\newblock {\textsl{Long time justification of wave turbulence theory.}}
	\newblock \burlalt{arXiv:2311.10082}{http://arxiv.org/abs/2311.10082}.
	

	
	\bibitem{DH26}
	{\rm Y.~Deng,  Z.~Hani}.
	\newblock \emph{Propagation of chaos and higher order statistics in wave kinetic theory}.
	\newblock  J. Eur. Math. Soc. (JEMS), \textbf{28}, no. 2, (2026), 673-733. 
	\burlalt{doi:10.4171/JEMS/1488}{https://dx.doi.org/10.4171/JEMS/1488}.
	
		\bibitem{DHM25}
	Y.~Deng, Z.~Hani, X. Ma.
	\newblock {\textsl{Long time derivation of the Boltzmann equation from hard sphere dyamics.}} To appear in Ann. of Math.
	\newblock \burlalt{arXiv:2408.07818}{http://arxiv.org/abs/2408.07818}.
	
	\bibitem{DHM25}
	Y.~Deng, Z.~Hani, X. Ma.
	\newblock {\textsl{Hilbert's sixth problem: derivation of fluid equations via Boltzmann's kinetic theory.}}
	\newblock \burlalt{arXiv:2503.01800}{http://arxiv.org/abs/2503.01800}.
	
	
	
	
	
	
	
	
	\bibitem{DST25}
A.-S. De Suzzoni, A. Stingo, A. Touati.
	 {\textsl{	Wave turbulence for a semilinear Klein-Gordon system.}}
	\newblock \burlalt{arXiv:2503.24222}{http://arxiv.org/abs/2503.24222}.
	
		\bibitem{ESY07}
	L. Erdös, M. Salmhofer, H.-T. Yau. \emph{Quantum diffusion of the random Schrodinger evolution in the scaling limit II. The recollision diagrams.} Commun. Math. Phys.
	\textbf{271}, (2007) 1–53.
	\burlalt{doi: 10.1007/s00220-006-0158-2}{https://dx.doi.org/ 10.1007/s00220-006-0158-2}.
	
	
		\bibitem{ESY08}
	L. Erdös, M. Salmhofer, H.-T. Yau. \emph{Quantum diffusion of the random Schrödinger evolution in the scaling limit.} Acta
	Math. \textbf{200}, no. 2, (2008), 211–277.
	\burlalt{doi:10.1007/s11511-008-0027-2}{https://dx.doi.org/10.1007/s11511-008-0027-2}.
	
	\bibitem{GNI85I}
{\rm 	G. Gallavotti, F. Nicolò}.
\emph{Renormalization theory in four-dimensional scalar fields (I)}.
\newblock  Commun. Math. Phys. \textbf{100}, (1985), 545–590.
\newblock
\burlalt{doi:10.1007/BF01217729}{https://dx.doi.org/10.1007/BF01217729}.

\bibitem{GNI85II}
{\rm 	G. Gallavotti, F. Nicolò}.
\emph{Renormalization theory in four-dimensional scalar fields (II)}.
\newblock  Commun. Math. Phys. \textbf{101}, (1985), 247–282.
\newblock
\burlalt{doi:10.1007/BF01218761}{https://dx.doi.org/10.1007/BF01218761}.

\bibitem{H69}
K. ~Hepp.
\newblock {\em On the equivalence of additive and analytic renormalization}.
\newblock Comm. Math. Phys. \textbf{14}, (1969), 67--69.
\newblock \burlalt{doi:10.1007/
	BF01645456}{http://dx.doi.org/10.1007/
	BF01645456}.
	
\bibitem{HQ18}
{\rm 	M. Hairer, J. Quastel}.
\emph{A class of growth models rescaling to KPZ}.
\newblock  Forum math. Pi  \textbf{6}, e3, (2018), 1-112.
\newblock
\burlalt{doi:10.1017/fmp.2018.2}{https://dx.doi.org/10.1017/fmp.2018.2}.

\bibitem{Kru56}
{\rm 	J. B. Kruskal}.
\emph{On the shortest spanning subtree of a graph and the traveling salesman
	problem}.
\newblock  Proc. Amer. Math. Soc. \textbf{7}, (1956), 48-50.
\newblock
\burlalt{doi:10.1090/S0002-9939-1956-0078686-7}{https://dx.doi.org/10.1090/S0002-9939-1956-0078686-7}.


\bibitem{Land75}
{\rm 	O.E. Landford}.
\emph{Time evolution of large classical systems}.
Lect. Notes in Physics, Springer Verlag, \textbf{38}, J. Moser ed., 1–111, 
(1975).
\burlalt{doi:10.1007/3-540-07171-7_1}{https://dx.doi.org/10.1007/3-540-07171-7_1}.
	
	\bibitem{LS11}
	{\rm 	J. Lukkarinen,  H. Spohn}.
	 \emph{Weakly nonlinear Schrödinger equation with random initial data}.
	\newblock  Invent. math. \textbf{183}, (2011),  79–188.
	\newblock
	\burlalt{doi:10.1007/s00222-010-0276-5}{https://dx.doi.org/10.1007/s00222-010-0276-5}.
	
	\bibitem{Pr57}
	{\rm 		R. C. Prim}.
	\emph{Shortest connection networks and some generalizations}.
	\newblock  Bell System Technical Journal, \textbf{36}, no. 6, (1957),  1389–1401.
	\newblock
	\burlalt{doi:10.1002/j.1538-7305.1957.tb01515.x}{https://dx.doi.org/10.1002/j.1538-7305.1957.tb01515.x}.
	
	\bibitem{PY17}
	{\rm 		A. Procacci}.
	\emph{A Correction to a Remark in a Paper by Procacci and Yuhjtman: New Lower Bounds for the Convergence Radius of the Virial Series}.
	\newblock Journal of Statistical Physics, \textbf{168}, no. 6, (2017), 1353--1362.
	\newblock
	\burlalt{doi:10.1007/s10955-017-1853-4}{https://dx.doi.org/10.1007/s10955-017-1853-4}.

	\bibitem{RiV91}
	{\rm V. Rivasseau}.
	\emph{From Perturbative to Constructive Renormalization}.
	\newblock  Princeton University Press, (1991).
	\newblock
	\burlalt{doi:10.1007/s00222-010-0276-10.1515/9781400862085}{https://dx.doi.org/10.1515/9781400862085}.
	
	\bibitem{RW14}
	{\rm V. Rivasseau, Z. Wang}.
	\emph{How to Resum Feynman Graphs}.
	\newblock  Ann. Henri Poincaré \textbf{15}, (2014), 2069-2083.
	\newblock
	\burlalt{doi: 10.1007/s00023-013-0299-8}{https://dx.doi.org/ 10.1007/s00023-013-0299-8}.

	
	\bibitem{Z69}
	W. ~Zimmermann.
	\newblock{\em Convergence of Bogoliubov’s method of renormalization in momentum space.}
	\newblock {Comm. Math. Phys. \textbf{15}, (1969), 208--234.}
	\newblock \burlalt{doi:10.1007/BF01645676}{http://dx.doi.org/10.1007/BF01645676}.

	
\end{thebibliography}
\end{document}